\documentclass{amsart}
\usepackage{amscd,amssymb,amsopn,amsmath,amsthm,graphics,amsfonts,enumerate,verbatim,calc}
\usepackage[dvips]{graphicx}
\usepackage[colorlinks=true,linkcolor=red,citecolor=blue]{hyperref}
\usepackage[all]{xy}
\usepackage{mathrsfs}

\addtolength{\textwidth}{2cm}
\calclayout

\newcommand{\rt}{\rightarrow}
\newcommand{\lrt}{\longrightarrow}

\newcommand{\st}{\stackrel}

\newcommand{\la}{\lambda}
\newcommand{\La}{\Lambda}

\newcommand{\Z}{\mathbb{Z}}

\newcommand{\CC}{\mathcal{C} }

\newcommand{\CE}{\mathcal{E}}
\newcommand{\CF}{\mathcal{F} }

\newcommand{\CH}{\mathcal{H}}
\newcommand{\CI}{\mathcal{I} }

\newcommand{\CM}{\mathcal{M} }

\newcommand{\CP}{\mathcal{P} }

\newcommand{\CS}{\mathcal{S} }
\newcommand{\CT}{\mathcal{T} }

\newcommand{\CV}{\mathcal{V}}

\newcommand{\CX}{\mathcal{X} }

\newcommand{\X}{\mathbf{X}}

\newcommand{\Mod}{{\rm{Mod\mbox{-}}}}

\newcommand{\mmod}{{\rm{{mod\mbox{-}}}}}

\newcommand{\im}{{\rm{Im}}}

\newcommand{\Coker}{{\rm{Coker}}}
\newcommand{\Ker}{{\rm{Ker}}}

\newcommand{\rad}{{\rm{rad}}}

\newcommand{\Hom}{{\rm{Hom}}}
\newcommand{\Ext}{{\rm{Ext}}}

\theoremstyle{plain}
\newtheorem{theorem}{Theorem}[section]
\newtheorem{corollary}[theorem]{Corollary}
\newtheorem{lemma}[theorem]{Lemma}

\newtheorem{proposition}[theorem]{Proposition}

\theoremstyle{definition}
\newtheorem{definition}[theorem]{Definition}

\newtheorem{construction}[theorem]{Construction}

\newtheorem{remark}[theorem]{Remark}

\theoremstyle{plain}

\theoremstyle{definition}

\numberwithin{equation}{section}

\begin{document}

\title[From morphism categories to functor categories  ]{From morphism categories to functor categories }

\author[ Rasool Hafezi and Hossein Eshraghi  ]{ Rasool Hafezi and Hossein Eshraghi}
\dedicatory{}

\address{School of Mathematics and Statistics,
Nanjing University of Information Science \& Technology, Nanjing, Jiangsu 210044, P.\,R. China}
\email{hafezi@nuist.edu.cn}
\address{Department of Pure Mathematics, Faculty of Mathematical Sciences, University of Kashan, PO Box 87317-51167, Kashan, Iran}
\email{eshraghi@kashanu.ac.ir }

\makeatletter
\@namedef{subjclassname@2020}{%
  \textup{2020} Mathematics Subject Classification}
\makeatother

\subjclass[2020]{18A25, 16G70, 16G10}

\keywords{Functor Category, Morphism Category, Auslander-Reiten Components.}

\begin{abstract}
For a nice-enough category $\CC$, we construct both the morphism category ${\rm H}(\CC)$ of $\CC$ and the category $\mmod\CC$ of all finitely presented contravariant additive functors over $\CC$ with values in Abelian groups. The main theme of this paper, is to translate some representation-theoretic attributes back and forth from one category to the other. This process is done by using an appropriate functor between these two categories, an approach which seems quite promising in particular when we show that many of almost split sequences are preserved by this functor. We apply our results to the case of wide subcategories of module categories to obtain certain auto-equivalences over them. Another part of the paper deals with Auslander algebras arising from algebras of finite representation type. In fact, we apply our results to study the Auslander-Reiten translates of simple modules over such algebras. In the last parts, we try to recognize particular components in the stable Auslander-Reiten quiver of Auslander algebras arising from self-injective algebras of finite representation type.
\end{abstract}

\maketitle

\section{Introduction}
As a popular belief, it is said that the introduction of the language of functor categories to the study of categories of modules over rings dates back to Auslander and his colleagues' works. These works trace back mainly to the papers \cite{A65, A2, A76, AR74, ARVI}. In particular, Auslander's Formula that suggests to recover the category $\mmod\Lambda$ of finitely generated modules over an Artin algebra $\Lambda$ as the quotient

$$\mmod\Lambda\simeq \frac{\mmod(\mmod\Lambda)}{\{ F : F(\Lambda)=0\}}$$
\noindent deserves attention; here and throughout, $\mmod(\mmod\Lambda)$ denotes the category of additive contravariant coherent functors on $\mmod\Lambda$ with values in $\mathcal{A}b$, the category of Abelian groups. While talking about the exchange between two categories consisting objects that are apparently of different types, one expects to encounter with functors transferring from one category to the other. Concerning the morphism categories and the functor categories, such a study has initiated probably in \cite{A2}. Roughly, the general theme of the current paper is to figure out how some representation-theoretic attributes transfer between functor and morphism categories.  However, to be more precise, we prefer to provide a layout of the paper section by section. Prior to this, we want to point out that the morphism category of $\Lambda$ has on its own right been systematically studied from various aspects: deriving its Auslander-Reiten theory in the language of AR-theory of $\Lambda$ \cite{RS, XZZ, Es, HE}, using it to study the so-called Auslander algebras \cite{AR76}, and looking at a particular subcategory of it, namely the monomorphism category, in order to establish links to Gorenstein homological algebra \cite{Z, LZ, ZX}.

\vspace{.07 cm}

First of all, to keep the results as general as possible, we try to deal with the morphism category ${\rm H}(\mathcal{C})$ of a nice-enough category $\mathcal{C}$ (definitions are recalled later on). Namely, if we assume that $\mathcal{C}$ is an idempotent-complete additive category that admits pseudokernels then, in Section $3$, we endow ${\rm H}(\mathcal{C})$ with an exact structure defined by degree-wise split exact sequences in $\mathcal{C}$, denoted ${\rm H}^{\rm cw}(\mathcal{C})$. Even though such constructions have been considered in some particular cases, e.g. in \cite{Ba} where the category of morphisms between projective modules over an Artin algebra come to play, and also in \cite{BSZ} where complexes of fixed size have been considered, we do it in a most general possible circumstance as declared above. The motivation behind such considerations comes from two origins. Firstly, we look for a reasonable structure on ${\rm H}(\mathcal{C})$ with respect to which one may define almost split sequences. Note, secondly, that if one imposes tougher conditions on $\mathcal{C}$, for instance taking $\mathcal{C}$ to be an extension-closed subcategory of $\mmod\Lambda$, then ${\rm H}(\mathcal{C})$ inherits an exact structure as an extension-closed subcategory of the morphism category of $\Lambda$. So now a natural question arises: What are intrinsic similarities between these two exact structures on ${\rm H}(\mathcal{C})$?

\vspace{.07 cm}

To get more involved with the aforementioned question, we need to take a glance at the contents of Section $4$. For, we recall form \cite{A2} that there exists a functor $\Theta: {\rm H}(\mathcal{C})\lrt\mmod\mathcal{C}$, where $\mmod\mathcal{C}$ is the category of contravariant additive coherent functors on $\mathcal{C}$. The objective in Section $4$ is to study $\Theta$ form the point of view of Auslander-Reiten theory. We show that $\Theta$ induces an equivalence ${\rm H}(\mathcal{C})/\big<(M\rt 0), (M\st{1}\rt M)\big>\,\simeq\mmod\mathcal{C}$ where $M$ runs through the objects of $\mathcal{C}$. Using this, we show that ${\rm H}^{\rm cw}(\mathcal{C})$ admits almost split sequences whenever $\mathcal{C}$ is assumed to be a dualizing variety. Furthermore, to conquer the question posed above, it is shown that if $\mathcal{C}$ is an extension-closed dualizing subvariety of $\mmod\Lambda$ then, in many cases, the almost split sequences in ${\rm H}^{\rm cw}(\mathcal{C})$ and ${\rm H}(\mathcal{C})$ coincide. Not going off-topic, one more thing will be proved: $\Theta$ respects almost split sequences.

\vspace{.07 cm}

In Section $5$, we turn to apply some of the results to the case of wide subcategories. To illuminate the role and importance of wide subcategories of $\mmod\Lambda$, we must point out that such subcategories arise naturally in the study of $\tau$-tiling theory of $\Lambda$ \cite{AIR} and in connection with determination of certain torsion classes in $\mmod\Lambda$ \cite{MS}. These also play significant role in the study of certain classes of universal localizations over $\Lambda$ \cite{MS, HMV1, HMV2}. Such classes of modules also appear in classification problems for the so-called $\tau$-tilting finite algebras.
Among other things, for a given functorially finite  subcategory $\CX$ of $\mmod\Lambda$ we construct, based on our previous results, an auto-equivalence $\sigma_\CX:\CX\rt \CX$. When $\CX$ is assumed to be a functorially finite wide subcategory, we show that $\sigma_\CX$ coincides with the identity functor on $\CX$; this is done by applying a result from \cite{En} indicating that every functorially finite wide subcategory of $\mmod\Lambda$ might be recognized as the module category of some Artin algebra. Using this, one obtains an  exact sequence $$ 0 \rt (-, \tau_{\CX}(X))\rt D(P, -)\rt D(Q, -)\rt D(X, -)\rt 0$$
in $\mmod \CX$ for every indecomposable module $X$ which is not projective in $\mathcal{X}$; here $\tau_{\mathcal{X}}$ denotes the Auslander-Reiten translation of $\mathcal{X}$ and $P\rt Q\rt X\rt 0$ is the minimal projective presentation of $X$ with respect to $\mathcal{X}$. In this regard, recall that for a non-projective $\Lambda$-module $M$ with minimal projective presentation $P\rt Q\rt M\rt 0$, there exists an exact sequence $0\rt\tau(M)\rt \nu(P)\rt \nu(Q)\rt\nu(M)\rt 0$ where $\tau$ and $\nu$ stand respectively for the Auslander-Reiten translation and the Nakayama functor over $\mmod\Lambda$. Hence the aforementioned exact sequence of functors resembles, and generalizes, the latter one. We believe that these observations are convincing-enough to say that the rich treasury behind functorially finite subcategories of $\mmod\Lambda$ might be discovered further by applying some instruments from functor categories.

\vspace{.08 cm}

In Section $6$, we switch to algebras $\Lambda $ of finite representation type. The main impetus for such a study comes from the fact that in this case, one may construct the Auslander algebra ${\rm A}$ of $\Lambda$ which is, by definition, the endomorphism algebra of a representation-generator $M$ of $\Lambda$. Then there is a nice interpretation of the category $\mmod{\rm A}$ in terms of the functor category; namely, there is a categorical equivalence $\mmod{\rm A}\simeq\mmod(\mmod\Lambda)$. In the meanwhile, it is known \cite{A76} that simple functors over $\mmod\Lambda$ correspond bijectively to indecomposable $\Lambda$-modules. Hence this categorical equivalence provides a framework in which one tries to understand in more details the simple modules over ${\rm A}$ and its projectively stable version $\underline{\rm A}$. The results presented in this section come up by analyzing certain almost split sequences mainly provided in \cite{HE} and also in \cite{HZ}. The main results discover a relation between the (inverse) Auslander-Reiten translation of simple  $\underline{\rm A}$-(resp. ${\rm A}$-) modules and the cosyzygies (resp. syzygies) of simple  $\underline{\rm A}$-(resp. ${\rm A}$-) modules.

\vspace{.08 cm}

The last section is devoted to study certain components in the (stable) Auslander-Reiten quiver $\Gamma_{\rm A}$ of the Auslander algebra ${\rm A}$ whenever $\Lambda$ is self-injective of finite representation type. Note that recognition of such components have already been the subject of some earlier researches \cite{IPTZ}. To this end, we firstly deal with $\tau_{\mathcal{H}}$-periodic objects by invoking some almost split sequences already obtained in \cite{HE}. In this direction, it turns out that the auto-equivalence $\mathscr{A}=\nu\tau^3$ of the stable category $\underline{\mmod}\Lambda$, as defined in \cite{HE}, plays a significant role. In fact, we show that the existence of certain $\mathscr{A}$-periodic $\Lambda$- modules compels $\Gamma_{\rm A}$ to contain a finite oriented cycle as a subquiver, and in particular, makes ${\rm A}$ into an algebra of finite representation type. Another result asserts that for $\Lambda$ self-injective of finite representation type, any component $\Xi$ of the stable Auslander-Reiten quiver of ${\rm A}$ that contains a certain simple module is either infinite or is of the form $\Z\Delta/G$ for a Dynkin quiver $\Delta$ and an automorphism group $G$ of $\Z\Delta$; this is based on a structural theorem due to Liu \cite{L}.

\section{preliminaries and notation}
In this section, we collect very briefly some necessary background material of the paper. When required, explicit references are provided.

\subsection{Functor Categories}\label{subsection-functor-category}


Let $k$ be a commutative Artinian ring and let $\CC$ be a  $k$-linear Krull-Schmidt category.
A \emph{$\CC$-module} is a contravariant additive functor from $\CC$ to the category
$\mathcal{A}b$ of Abelian groups. We denote by $\Mod\CC$ the category of all $\CC$-modules,
and by $\mmod\CC$ the full subcategory of $\Mod\CC$ consisting of finitely presented
modules. Recall from \cite{A65} that a $\CC$-module $M$ is called \emph{finitely presented} if there exists an
exact sequence
\[\Hom_\CC(-,A)\to\Hom_{\CC}(-,B)\to M\to0\]
in $\Mod\CC$, for some objects $A,B$ of $\CC$.
Moreover, $\rm{proj}\mbox{-}\CC$ and $\rm{inj}\mbox{-}\CC$ denote the full subcategories of $\mmod\CC$ consisting   of projective and injective objects in $\mmod\CC$, respectively. The category $\mmod \CC$ is an abelian category if and only if $\CC$ admits pseudokernels; see page $315$ of \cite{AR74}. We  shall sometimes write $ (-, X)$ to indicate the representable functor $\CC(-, X)=\Hom_{\CC}(-, X)$.

\subsection{Dualizing $k$-varieties}
Let $\frak{r}$ be the radical of $k$ and ${\rm E}(k/{\frak{r}})$ be the injective envelope of the $k$-module $k/{\frak{r}}$. A Hom-finite $k$-linear Krull-Schmidt category $\CC$ is called a \emph{dualizing $k$-variety} \cite{AR74}
if the $k$-dual functors $D:\Mod\CC\to \Mod(\CC^{\rm op})$ and $D:\Mod(\CC^{\rm op})\to \Mod\CC$ given by $D(F)(C)=\Hom_k(F(C), {\rm E}(k/{\frak{r}}))$ for every object $C$ of $\CC$ and $F\in\Mod(\CC)$ or $\Mod(\CC^{\rm op})$ induce dualities
\[D:\mmod\CC\to\mmod(\CC^{\rm op})\ \mbox{ and }\ D:\mmod(\CC^{\rm op})\to \mmod\CC.\]
In this case, it turns out that $\mmod\CC$ is an abelian subcategory of $\Mod\CC$ that admits enough projective and enough injective
objects \cite[Theorem 2.4]{AR74}. As an example, $\rm{proj}\mbox{-}\La$, the category of finitely generated projective modules over an Artin $k$-algebra $\La$, is a dualizing $k$-variety. We note from \cite[Proposition 2.6]{AR74} that if $\CC$ is a dualizing $k$-variety then so is $\mmod\CC$.
Furthermore, any functorially finite subcategory of a dualizing $k$-variety is itself a
dualizing $k$-variety by \cite[Theorem 2.3]{AS81}.

\subsection{Morphism Categories}
Let $\CC$ be a category.
The morphism category $\rm{H}(\CC)$ of $\CC$ is a category whose objects are morphisms $f:X\rt Y$ in $\CC$, and whose morphisms are given by commutative diagrams. If we regard the morphism $f:X\rt Y$ as an object in $\rm{H}(\CC)$, we will usually present it as
 $(X\st{f}\rt Y)$. However, due to typographical considerations, we have to use also the vertical notation
 $\left(\begin{smallmatrix}
 X\\
 Y
 \end{smallmatrix}
 \right)_f$.
A morphism between the objects $(X\st{f}\rt Y)$ and $(X'\st{f'}\rt Y')$ is presented as
$(\sigma_1, \sigma_2): (X\st{f}\rt Y)\rt (X'\st{f'}\rt Y')$
or,   $\left(\begin{smallmatrix}
 \sigma_1\\
 \sigma_2
 \end{smallmatrix}
 \right): \left(\begin{smallmatrix}
 X\\
 Y
 \end{smallmatrix}
 \right)_f \rt \left(\begin{smallmatrix}
 X'\\
 Y'
 \end{smallmatrix}
 \right)_{f'}$, where $\sigma_1:X\rt X'$ and $\sigma_2:Y\rt Y'$ are morphisms in $\CC$ with $\sigma_2f=f'\sigma_1.$

\vspace{.1 cm}

Adapting the notation, the morphism category raised from $\mathcal{C}=\mmod\Lambda$, the category of finitely generated right modules over an Artin $k$-algebra $\Lambda$, will be denoted simply by $\mathcal{H}$; this will cause no ambiguity. The same rule also applies to the monomorphism category $\CS$ of $\Lambda$ whose objects are just monic $\Lambda$-maps.

\subsection{Auslander-Reiten-Serre Duality}
Let $(\mathcal{C}, \CE)$ be an exact category in the sense of Quillen \cite{Q, K} (see next section for an introduction). Recall that a morphism $v\colon E\to Y$ in $\mathcal{C}$ is called \emph{right almost split} if it is not a retraction and each $f\colon Z\to Y$ which is not a retraction factors through $v$. Dually, a morphism $u\colon X\to E$ in $\mathcal{C}$
is called \emph{left almost split} if it is not a section and each $f\colon X\to Z$ which is not a section factors through $u$. An admissible sequence  $\delta\colon 0 \rt X \xrightarrow{u} E \xrightarrow{v} Y \rt 0$ in $\CE$ is an \emph{almost split sequence} if $u$ is left almost split and $v$ is right almost split.  Since $\delta $ determines $X$ and $Z$ in a unique way, we call $X$ the Auslander-Reiten translation $X=\tau_{\mathcal{C}}(Y)$ of $Y$ in $\mathcal{C}$.  A non-zero object $X \in \mathcal{C}$ is said to be {\it endo-local} if its endomorphism ring $\rm{End}_{\mathcal{C}}(X)$ is local. Following \cite[Definition 3.1]{INY}, we say that $\mathcal{C}$ has almost split sequences if endo-local non projective objects of $\CC$ and endo-local non-injective objects of $\CC$ are respectivel the terminal and the initial terms of some almost split sequence in $\CE$.

\vspace{.1 cm}

Assume  now that $\mathcal{C}$ is further a {\it $k$-linear category} and let $D$ be the $k$-dual functor. Put $\underline{\CC}$ and $\overline{\CC}$ denote respectively the projectively and the injectively stable categories of $\CC$.  An Auslander-Reiten-Serre duality (ARS duality, in brief) is a pair $(\tau_{\CC}, \eta)$ consisting of an equivalence functor $\tau_{\mathcal{C}}: \underline{\mathcal{C}}\rt \overline{\mathcal{C}}$ together with a bi-natural isomorphism
$$\eta_{X, Y}:\Hom_{\underline{\mathcal{C}}}(X, Y)\simeq D\Ext^1_{\mathcal{C}}(Y, \tau_{\mathcal{C}}(X)) \ \ \ \text{for any}  \ \ X, Y \in \mathcal{C}.$$
The following lemma, taken from \cite[Theorem 3.6]{INY}  (see also \cite{J}), provides a close connection between the existence of almost split sequences in $\mathcal{C}$ and the existence of an ARS-duality. Let us recall that under the above hypothesis, $\mathcal{C}$ is {\it Ext-finite} if the $k$-modules $\Ext^1_{\mathcal{C}}(X, Y)$ are finitely generated.

\begin{lemma}\label{ARS duality}
	Let $\mathcal{C}$ be a $k$-linear Ext-finite Krull-Schmidt exact category. Then the following conditions are equivalent.
	\begin{itemize}
		\item [$(1)$] $\mathcal{C}$ has almost split sequences.
		\item [$(2)$] $\mathcal{C}$ has an Auslander-Reiten-Serre duality.
		\item [$(3)$] The stable category $\underline{\mathcal{C}}$ is a dualizing $k$-variety.
		\item [$(4)$] The stable category $\overline{\mathcal{C}}$ is a dualizing $k$-variety.
	\end{itemize}	
\end{lemma}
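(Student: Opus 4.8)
The plan is to treat this as essentially a citation-driven unpacking of \cite[Theorem 3.6]{INY}, reducing the four-way equivalence to the two genuinely substantive implications $(1)\Leftrightarrow(2)$ and $(2)\Leftrightarrow(3)$, with $(3)\Leftrightarrow(4)$ following by duality. First I would record the standing hypotheses that will be used throughout: $\mathcal{C}$ is $k$-linear, Krull-Schmidt, $\mathrm{Ext}$-finite, and equipped with an exact structure $\CE$; these guarantee that $\mathrm{Hom}$- and $\mathrm{Ext}^1$-groups are finitely generated $k$-modules, so that applying the duality $D=\mathrm{Hom}_k(-,\mathrm{E}(k/\mathfrak{r}))$ makes sense and is involutive on the relevant subcategories.

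For $(1)\Rightarrow(2)$, I would start from an almost split sequence $\delta\colon 0\to\tau_{\mathcal{C}}(Y)\to E\to Y\to 0$ for each endo-local non-projective $Y$, and use right/left almost split maps to show that the assignment $Y\mapsto\tau_{\mathcal{C}}(Y)$ extends to a well-defined additive functor $\underline{\mathcal{C}}\to\overline{\mathcal{C}}$ on indecomposable (hence, by Krull-Schmidt, all) objects, and that it is an equivalence with quasi-inverse given by the injectively-stable analogue $\tau_{\mathcal{C}}^{-}$. The bi-natural isomorphism $\eta_{X,Y}\colon\mathrm{Hom}_{\underline{\mathcal{C}}}(X,Y)\simeq D\mathrm{Ext}^1_{\mathcal{C}}(Y,\tau_{\mathcal{C}}(X))$ is then obtained by the classical argument: an extension in $\mathrm{Ext}^1_{\mathcal{C}}(Y,\tau_{\mathcal{C}}(X))$ pairs with a stable morphism $X\to Y$ via pullback of $\delta$, and the defining property of the almost split sequence forces this pairing to be non-degenerate on each indecomposable summand. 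For $(2)\Rightarrow(1)$, conversely, given an ARS duality one reconstructs the almost split sequence ending at an endo-local non-projective $Y$ as the extension corresponding, under $\eta$, to a generator of the socle of the $\mathrm{End}_{\underline{\mathcal{C}}}(\tau_{\mathcal{C}}(Y))$-module $D\mathrm{End}_{\underline{\mathcal{C}}}(\tau_{\mathcal{C}}(Y))$; one then checks left/right almost splitness using naturality of $\eta$.

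For $(2)\Leftrightarrow(3)$, the point is that an equivalence $\tau_{\mathcal{C}}\colon\underline{\mathcal{C}}\to\overline{\mathcal{C}}$ together with $\eta$ yields, for fixed $X$, an isomorphism $\mathrm{Hom}_{\underline{\mathcal{C}}}(X,-)\simeq D\mathrm{Ext}^1_{\mathcal{C}}(-,\tau_{\mathcal{C}}(X))$ of functors on $\mathcal{C}$; using $\mathrm{Ext}$-finiteness this exhibits the representable functors over $\underline{\mathcal{C}}$ as $k$-duals of finitely presented functors over $\overline{\mathcal{C}}$, which is precisely what is needed to verify that the $k$-dual functor restricts to a duality $\mmod\underline{\mathcal{C}}\rightleftarrows\mmod(\underline{\mathcal{C}}^{\mathrm{op}})$, i.e. that $\underline{\mathcal{C}}$ is a dualizing $k$-variety. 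The reverse implication runs the same computation backwards, reconstructing $\tau_{\mathcal{C}}$ from the duality on $\mmod\underline{\mathcal{C}}$ via Yoneda. Finally $(3)\Leftrightarrow(4)$ is immediate once one knows $(3)\Leftrightarrow(2)\Leftrightarrow(1)$ and observes that condition $(1)$ is self-dual (it simultaneously asserts the existence of almost split sequences starting at endo-local non-injectives), so by the already-established equivalences applied to $\mathcal{C}^{\mathrm{op}}$ one gets $(4)$; alternatively, $\tau_{\mathcal{C}}$ being an equivalence transports the dualizing-variety property from $\underline{\mathcal{C}}$ to $\overline{\mathcal{C}}$ directly.

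I expect the main obstacle to be not any single implication but the bookkeeping around \emph{which} objects almost split sequences are required for: the definition adopted here (following \cite[Definition 3.1]{INY}) only demands them at endo-local objects, not at all indecomposables, so some care is needed to see that $\tau_{\mathcal{C}}$ is nonetheless defined on the whole stable category and that the naturality squares for $\eta$ close up; the honest resolution is to cite \cite[Theorem 3.6]{INY} for the precise form, and I would do exactly that rather than reprove it, since the statement is explicitly attributed there.
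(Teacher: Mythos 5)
Your proposal is correct and lands exactly where the paper does: the paper gives no proof of this lemma at all, stating it as ``taken from \cite[Theorem 3.6]{INY} (see also \cite{J})'', which is precisely the citation you conclude with. Your sketch of the internal implications $(1)\Leftrightarrow(2)\Leftrightarrow(3)\Leftrightarrow(4)$ is a reasonable account of the standard argument, but the paper treats the whole statement as an external black box, so no further comparison is needed.
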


Throughout the paper, $\Lambda$ will stand for a fixed Artin $k$-algebra and modules are, by default, finitely generated right modules. The Auslander-Reiten translation, the Nakayama functor, the syzygy and the cosyzygy functor of $\Lambda$ are respectively denoted by $\tau$, $\nu$, $\Omega$, and $\Omega^{-1}$. If we deal with an algebra other than $\Lambda$ or with a category, these functors will be accompanied with necessary subscripts. The symbols $\Ker$, $\Coker$, and $\im$, used freely in all contexts, stand respectively for the kernel, cokernel, and the image of morphisms.

\section{Exact structures on the morphism category}
An exact category  $(\CC, \CE)$ is formed by an additive category $\CC$, and a class $\CE$ of composable pairs of morphisms in $\CC$ (also called kernel-cokernel pairs) satisfying certain axioms that we refrain to exhibit here and refer the reader e.g. to \cite{K}. The composable pair $(i, p)$ in $\CE$ is usually denoted by $0 \rt A' \st{i}\rt  A \st{p}\rt  A'' \rt 0$, where $i: A' \rt A$ and $p: A \rt A''$ are respectively called an $\CE$-admissible monic and an $\CE$-admissible epic. Composable pairs, admissible monics and admissible epics are sometimes referred to respectively as conflations, inflations and deflations. The notion of an exact category was first introduced by Quillen in \cite{Q} and then Keller  \cite{K} proved the redundancy of some axioms.

\vspace{.1 cm}
Let $\CC$ be an additive category. In this section, we shall put exact structures on the morphism category ${\rm H}(\CC)$ of $\CC$ \cite{Ba}. In this connection, let us mention that such studies have also been conducted in \cite{BSZ} and the material presented in this section contains a detailed study of that work if one restricts to the case of complexes of size $2$ in the sense defined there.

\vspace{.2 cm}
{\it Exact structure {\bf E1}.} Let  $\CE^{\rm cw}$ be the class of all pairs of composable morphisms
$$\xymatrix@1{ \delta: & {\left(\begin{smallmatrix} X_1\\ X_2\end{smallmatrix}\right)}_{f}
		\ar[rr]^-{\left(\begin{smallmatrix} \phi_1 \\ \phi_2 \end{smallmatrix}\right)}
		& & {\left(\begin{smallmatrix}Z_1\\ Z_2\end{smallmatrix}\right)}_{h}\ar[rr]^-{\left(\begin{smallmatrix} \psi_1 \\ \psi_2\end{smallmatrix}\right)}& &{\left(\begin{smallmatrix}Y_1 \\ Y_2\end{smallmatrix}\right)}_{g}&  } \ \    $$
such that the induced composable morphisms $X_i\st{\phi_i}\rt Z_i\st{\psi_i}\rt Y_i$ split in $\CC$ for $i=1, 2$.
It can be easily seen that any pair of composable morphisms in $\CE^{\rm cw}$ is isomorphic to a pair of composable morphisms of the form
	$$\xymatrix@1{ \delta': & {\left(\begin{smallmatrix} X_1\\ X_2\end{smallmatrix}\right)}_{f}
		\ar[rr]^-{\left(\begin{smallmatrix} {\left[\begin{smallmatrix} 1\\ 0\end{smallmatrix}\right]} \\ {\left[\begin{smallmatrix} 1\\ 0\end{smallmatrix}\right]} \end{smallmatrix}\right)}
		& & {\left(\begin{smallmatrix}X_1\oplus Y_1\\ X_2\oplus Y_2\end{smallmatrix}\right)}_{h}\ar[rr]^-{\left(\begin{smallmatrix} [0~~1] \\ [0~~1]\end{smallmatrix}\right)}& &
		{\left(\begin{smallmatrix}Y_1  \\ Y_2\end{smallmatrix}\right)}_{g}&  } \ \    $$
	where $h={\left(\begin{smallmatrix}f & q\\ 0 & g \end{smallmatrix}\right)}$ and $q:Y_1\rt X_2$ is a possibly non-zero morphism in $\CC$. Regarding this easy observation, without loss of generality, we usually take all kernel-cokernel pairs in ${\rm H}(\CC)$ to be of this form; this is justified by the following lemma.
	
\begin{lemma}\label{Lkercoker}
Any object in $\CE^{\rm cw}$ is a  kernel-cokernel pair in ${\rm H}(\CC)$.
 \end{lemma}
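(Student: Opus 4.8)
The plan is to verify directly that the displayed composable pair $\delta'$ is a kernel-cokernel pair in ${\rm H}(\CC)$, and then invoke the observation (preceding the lemma) that every object of $\CE^{\rm cw}$ is isomorphic to one of this form, so that being a kernel-cokernel pair — a property stable under isomorphism of composable pairs — transfers to the general case. Thus the work reduces to the split case with the specified maps and with $h = \left(\begin{smallmatrix} f & q \\ 0 & g\end{smallmatrix}\right)$.

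First I would show that $\left(\begin{smallmatrix}[0~1]\\ [0~1]\end{smallmatrix}\right)\colon \left(\begin{smallmatrix}X_1\oplus Y_1\\ X_2\oplus Y_2\end{smallmatrix}\right)_h \rt \left(\begin{smallmatrix}Y_1\\ Y_2\end{smallmatrix}\right)_g$ is a cokernel of $\left(\begin{smallmatrix}[1~0]^{t}\\ [1~0]^{t}\end{smallmatrix}\right)$. Given a morphism $(\gamma_1,\gamma_2)\colon \left(\begin{smallmatrix}X_1\oplus Y_1\\ X_2\oplus Y_2\end{smallmatrix}\right)_h \rt \left(\begin{smallmatrix}W_1\\ W_2\end{smallmatrix}\right)_w$ whose composite with the inclusion vanishes, write $\gamma_i = [\gamma_i'~\gamma_i'']$ with respect to the direct sum decomposition; the hypothesis forces $\gamma_i' = 0$, so $\gamma_i$ factors (uniquely, since $[0~1]$ is a split epic componentwise) through $[0~1]$ via $\gamma_i''\colon Y_i \rt W_i$. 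The only point requiring care is that the pair $(\gamma_1'',\gamma_2'')$ is a legitimate morphism in ${\rm H}(\CC)$, i.e. $\gamma_2'' g = w\gamma_1''$; this comes out of the commutativity relation $\gamma_2 h = w\gamma_1$ upon restricting to the $Y_1$-summand and using the shape of $h$ (whose bottom-right entry is $g$ and whose bottom-left entry is $0$). Dually, I would show $\left(\begin{smallmatrix}[1~0]^{t}\\ [1~0]^{t}\end{smallmatrix}\right)$ is a kernel of $\left(\begin{smallmatrix}[0~1]\\ [0~1]\end{smallmatrix}\right)$: a morphism into $\left(\begin{smallmatrix}X_1\oplus Y_1\\ X_2\oplus Y_2\end{smallmatrix}\right)_h$ killed by $[0~1]$ has image in the $X_i$-summands, hence factors through the inclusion, and again one checks the induced pair respects the $h$-versus-$f$ commutativity (here the entry $q$ plays no role because the relevant component lands in $X_i$). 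Uniqueness of the factorizations is immediate from componentwise split mono/epi-ness in $\CC$.

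The routine but slightly fiddly part will be bookkeeping the $2\times 2$ block structure of $h$ and checking that the induced componentwise maps assemble into genuine morphisms of ${\rm H}(\CC)$ — that is, keeping track of which commutativity squares need verifying and confirming the entry $q$ causes no obstruction. I expect this to be the main (though modest) obstacle; there is no conceptual difficulty, since the statement is essentially that a degreewise-split short exact sequence in a morphism category is automatically a kernel-cokernel pair, the morphism-category structure being "transparent" to kernels and cokernels computed componentwise once the top-left/bottom-right block form of $h$ is respected. Finally I would remark that this lemma is exactly what licenses the convention, adopted just before its statement, of always taking the conflations in ${\rm H}^{\rm cw}(\CC)$ to be of the normalized form $\delta'$.
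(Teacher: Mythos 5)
Your proposal is correct and follows essentially the same route as the paper: reduce to the normalized form $\delta'$ with $h=\left(\begin{smallmatrix} f & q\\ 0 & g\end{smallmatrix}\right)$, then check directly that morphisms annihilated by the inclusion (resp.\ by the projection) factor uniquely through $[0~~1]$ (resp.\ through $\left[\begin{smallmatrix}1\\0\end{smallmatrix}\right]$). You merely spell out more explicitly than the paper the verification that the induced componentwise maps commute with the structure morphisms, i.e.\ that the entry $q$ is harmless; this is exactly the step the paper dismisses with ``it readily follows.''
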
	
\begin{proof}
 Take the element $\delta'$ of $\CE^{\rm cw}$ and assume that the composite of the morphisms $(\sigma_1, \sigma_2):(X_1\oplus Y_1\st{h}\rt X_2\oplus Y_2)\rt (V\st{s}\rt W)$ and $({\left[\begin{smallmatrix} 1\\ 0\end{smallmatrix}\right]}, {\left[\begin{smallmatrix} 1\\ 0\end{smallmatrix}\right]})$ vanishes. This means that the restriction of $\sigma_i$ on $X_i$, for $i=1, 2$, is the zero map. This enables us to define the morphisms $\sigma_1|_{Y_1}$ and  $\sigma_2|_{Y_2}$ and it readily follows that $(\sigma_1, \sigma_2)$ factors uniquely over $([0~~1], [0~~1])$ via the morphism $(\sigma_1|_{Y_1}, \sigma_2|_{Y_2})$.
The remaining axioms are verified similarly.
\end{proof}	
	
Recall that an additive category $\mathcal{D}$ is called
idempotent-complete if every idempotent endomorphism in $\mathcal{D}$ admits a kernel.

\begin{proposition}\label{exact structure on H}
Assume $\mathcal{C}$ is idempotent-complete and admits pseudokernels. Then $\CE^{\rm cw}$ defines an exact structure on the additive category ${\rm H}(\CC)$.
\end{proposition}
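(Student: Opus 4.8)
The plan is to verify the Quillen--Keller axioms for an exact structure directly, taking advantage of the explicit normal form for elements of $\CE^{\rm cw}$ recorded before Lemma \ref{Lkercoker}. The starting point is Lemma \ref{Lkercoker}, which already tells us that every member of $\CE^{\rm cw}$ is an honest kernel-cokernel pair; so what remains is to check that $\CE^{\rm cw}$ is closed under isomorphism (immediate), that the identity morphism on any object is both an admissible monic and an admissible epic (use the split conflation $X\rt X\oplus 0\rt 0$ degreewise), that admissible monics and admissible epics are each closed under composition, and that pushouts of admissible monics along arbitrary morphisms and pullbacks of admissible epics along arbitrary morphisms exist and again lie in $\CE^{\rm cw}$.

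The key observation making all of this work is that $\CE^{\rm cw}$ is defined purely by a \emph{degreewise} condition: a conflation in ${\rm H}(\CC)$ is exactly a commutative ``ladder'' whose top row $X_1\rt Z_1\rt Y_1$ and bottom row $X_2\rt Z_2\rt Y_2$ are both split exact in $\CC$. The split exact sequences in any additive category $\CC$ form (the conflations of) the \emph{split exact structure} on $\CC$; this is the minimal exact structure and requires no hypotheses on $\CC$ at all. So the strategy is to regard ${\rm H}(\CC)$ as (a full subcategory of) the category of functors from the arrow category $\bullet\!\rt\!\bullet$ into $\CC$, and to transport the split exact structure on $\CC$ levelwise. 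For the closure-under-composition axioms and the Noether isomorphism-type axioms, one checks them levelwise in $\CC$, where they hold because the split exact structure is an exact structure; the commutativity squares glue together automatically because every construction (direct sum, the connecting morphism $q$) is functorial in the arrow. For the pushout axiom: given an admissible monic $\binom{\phi_1}{\phi_2}\colon \binom{X_1}{X_2}_f\rt\binom{Z_1}{Z_2}_h$ and an arbitrary morphism $\binom{\psi_1}{\psi_2}\colon\binom{X_1}{X_2}_f\rt\binom{W_1}{W_2}_k$, form the pushout in each degree in $\CC$ (which exists because $\phi_i$ is a split monic, its pushout being again a split monic), obtain objects $P_1,P_2$ with an induced map $P_1\rt P_2$ making the pushout square commute, and verify the universal property of the pushout in ${\rm H}(\CC)$ by testing against arrows into an arbitrary object of ${\rm H}(\CC)$, reducing once more to the degreewise universal property in $\CC$. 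The pullback axiom is dual.

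Here is where the hypotheses on $\CC$ enter, and this is the step I expect to be the main obstacle, or at least the only non-formal point: although the split exact structure exists on any additive $\CC$, to even speak of an exact structure on ${\rm H}(\CC)$ one wants ${\rm H}(\CC)$ to be additive, and more importantly the axiom that \emph{admissible monics are stable under pushout along arbitrary morphisms} forces us to take pushouts in ${\rm H}(\CC)$, which need not exist for general $\CC$. The assumption that $\CC$ is idempotent-complete guarantees that split monomorphisms and split epimorphisms behave well --- in particular every split monic $\phi_i\colon X_i\rt Z_i$ in $\CC$ has a genuine cokernel and $Z_i\cong X_i\oplus\Coker\phi_i$, which is exactly what underlies the normal form $\delta'$ --- and this is what lets us construct the required pushouts/pullbacks of conflations concretely (they are again degreewise split, with the connecting morphism $q$ transported along). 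The ``admits pseudokernels'' hypothesis is the condition under which ${\rm H}(\CC)$, viewed inside the relevant functor category, is itself well enough behaved (and is precisely the standing hypothesis needed elsewhere, e.g.\ for $\mmod\CC$ to be abelian); I would invoke it to ensure the ambient category is additive with the finiteness one needs and that the normal-form reduction preceding Lemma \ref{Lkercoker} is legitimate. So my proof would: (i) reduce every conflation to the normal form $\delta'$; (ii) check the identity/isomorphism axioms trivially; (iii) check closure of admissible monics and epics under composition levelwise using that split short exact sequences compose; (iv) construct pushouts along arbitrary maps and pullbacks along arbitrary maps degreewise in $\CC$, using idempotent-completeness to split off the relevant summands, and verify their universal properties by reduction to $\CC$; and (v) conclude that $({\rm H}(\CC),\CE^{\rm cw})$ satisfies Keller's axioms, hence is exact. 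I would remark that essentially this is the statement that a levelwise exact structure on a diagram category is again exact, specialized to the split structure and the shape $\bullet\!\rt\!\bullet$.
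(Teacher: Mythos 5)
Your proof is correct in substance, but it takes a genuinely different route from the paper's. The paper does not verify the Quillen--Keller axioms by hand: it uses idempotent-completeness to get the Yoneda equivalence $\CC\simeq{\rm proj}\mbox{-}\CC$, extends this to ${\rm H}(\CC)\simeq{\rm H}({\rm proj}\mbox{-}\CC)$, uses the pseudokernel hypothesis to make $\mmod\CC$ (hence ${\rm H}(\mmod\CC)$) abelian, observes that the degreewise split pairs of $\CE^{\rm cw}$ correspond exactly to the short exact sequences of ${\rm H}(\mmod\CC)$ with all terms in ${\rm H}({\rm proj}\mbox{-}\CC)$ (a short exact sequence of representables is automatically split in each degree), and then invokes the standard fact that an extension-closed full subcategory of an abelian category inherits an exact structure. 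Your degreewise verification is the more elementary alternative and does go through: the normal form $\delta'$ reduces every conflation to $\bigl(\begin{smallmatrix}X_1\\X_2\end{smallmatrix}\bigr)_f\rt\bigl(\begin{smallmatrix}X_1\oplus Y_1\\X_2\oplus Y_2\end{smallmatrix}\bigr)_h\rt\bigl(\begin{smallmatrix}Y_1\\Y_2\end{smallmatrix}\bigr)_g$, after which compositions and pushouts/pullbacks are explicit matrix computations (e.g.\ the pushout of the inflation along $(\alpha_1,\alpha_2)$ into $(W_1\st{k}\rt W_2)$ is $W_1\oplus Y_1\rt W_2\oplus Y_2$ with structure map $\bigl(\begin{smallmatrix}k&\alpha_2q\\0&g\end{smallmatrix}\bigr)$, again an inflation with the same cokernel). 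What each approach buys: the paper's is shorter and reuses a standard theorem, at the cost of genuinely needing both hypotheses; yours is self-contained and, if written out, actually shows the hypotheses play no role. Indeed your attempt to locate where they are used is the one shaky point of the proposal --- the pushout of a degreewise split monic exists in ${\rm H}(\CC)$ for \emph{any} additive $\CC$ (it is just $W\oplus Y$ in each degree, with the twist transported by $\alpha_2$), the complement $Y_i$ is already part of the data of a conflation so no idempotent needs to be split, and pseudokernels are irrelevant to the normal-form reduction. So your argument does not need the stated hypotheses at all; this is consistent with, and in fact sharpens, the Remark the authors place immediately after the proposition, where they concede the hypotheses may be superfluous for this statement (they are, however, used essentially in their proof and elsewhere in the paper).
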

\begin{proof}
Since $\CC$ is idempotent-complete, it is known that the Yoneda functor gives an equivalence $\CC\simeq\rm{proj}\mbox{-}\CC$. This equivalence is  naturally extended to an equivalence between corresponding morphism categories; i.e., ${\rm H}(\CC)\simeq {\rm H}({\rm proj}\mbox{-}\CC)$. One observes that, under this equivalence, the kernel-cokernel pairs in $\CE^{\rm cw}$ provided by Lemma \ref{Lkercoker} correspond bijectively to the short exact sequences in the abelian category ${\rm H}(\mmod \CC)$ whose terms lie inside ${\rm H}({\rm proj}\mbox{-}\CC)$. But the subcategory ${\rm H}({\rm proj}\mbox{-}\CC)$ is closed under extensions and inherits an exact structure from ${\rm H}(\mmod \CC)$.
\end{proof}

\begin{remark}
To make our arguments work, we had to impose some restrictions on the additive category $\CC$ to get a suitable exact structure out of ${\rm H}(\CC)$. However,  it may be the case that the aforementioned set of requirements is not minimal in the sense that the above family of kernel-cokernel pairs may equip ${\rm H}(\CC)$ with an exact structure even if some of the hypothesis in Proposition \ref{exact structure on H} are dropped.
\end{remark}

From now on we assume  that $\CC$ is idempotent-complete and  admits pseudokernels, and the symbol
${\rm H}^{\rm cw}(\CC)$ stands for the exact category ${\bf E1}=({\rm H}(\CC), \CE^{\rm cw})$, sometimes also called the cw-exact category. The following proposition is recorded for future use.

\begin{proposition}\label{projective-object}
	Suppose $(X_1 \st{f} \rt X_2)$ is an object in ${\rm H}^{\rm cw}(\CC)$.
	\begin{itemize}
		\item [$(1)$] $f$ defines an indecomposable projective object in ${\rm H}^{\rm cw}(\CC)$ if and only if it is isomorphic either to $(X \st{1}\rt X)$ or $(0 \rt X)$ for some  indecomposable object $X$ in $\CC$.
	\item[$(2)$]  $f$ defines an indecomposable injective object in ${\rm H}^{\rm cw}(\CC)$ if and only if it is isomorphic either to $(X \st{1}\rt X)$ or $(X\rt 0)$ for some  indecomposable object $X$ in $\CC$.
	\end{itemize}
Furthermore, ${\rm H}^{\rm cw}(\CC)$ has enough projectives and enough injectives.
\end{proposition}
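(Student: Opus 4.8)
The plan is to work through the equivalence ${\rm H}(\CC)\simeq{\rm H}({\rm proj}\mbox{-}\CC)$ established in the proof of Proposition~\ref{exact structure on H}, and to identify projectives and injectives on the ${\rm H}({\rm proj}\mbox{-}\CC)$ side, where everything is visible inside the ambient abelian category ${\rm H}(\mmod\CC)$. Since the cw-exact structure on ${\rm H}(\CC)$ was defined by pulling back conflations from ${\rm H}(\mmod\CC)$, an object is projective (resp. injective) in ${\rm H}^{\rm cw}(\CC)$ exactly when it is $\Ext^1$-orthogonal on the appropriate side to all cw-conflations; via the equivalence this becomes $\Ext^1$-orthogonality, taken inside ${\rm H}(\mmod\CC)$, against the short exact sequences whose three terms all lie in ${\rm H}({\rm proj}\mbox{-}\CC)$.

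First I would treat part $(1)$. For an object $\left(\begin{smallmatrix}X_1\\X_2\end{smallmatrix}\right)_f$ with $X_1,X_2\in{\rm proj}\mbox{-}\CC$, I would test projectivity against the canonical cw-conflations. Lifting along the deflation $([0~1],[0~1])\colon\left(\begin{smallmatrix}X_2'\oplus Y_1\\X_2'\oplus Y_2\end{smallmatrix}\right)_h\to\left(\begin{smallmatrix}Y_1\\Y_2\end{smallmatrix}\right)_g$ forces, by a direct diagram chase, that $f$ can have no ``new'' part in degree $2$ that is not already hit from degree $1$: concretely, the identity map on $\left(\begin{smallmatrix}X_1\\X_2\end{smallmatrix}\right)_f$ must lift through a suitable such deflation built from $f$ itself, and the only way this succeeds for every such conflation is that $f$ split-injects $X_1$ as a direct summand, reducing $f$ (up to isomorphism in ${\rm H}(\CC)$, using idempotent-completeness of $\CC$ to split off that summand) to $X_1\xrightarrow{\,1\,}X_1$ plus $0\to(\text{complement})$. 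Hence every projective is a direct sum of copies of $(X\xrightarrow{1}X)$ and $(0\to X)$; conversely these are easily checked to be projective in ${\rm H}^{\rm cw}(\CC)$ because in any cw-conflation the degreewise sequences $X_i\to Z_i\to Y_i$ split, so one lifts against a deflation degreewise and then assembles the two lifts into a morphism of ${\rm H}(\CC)$ (the compatibility square commutes automatically for these two simple shapes). Indecomposability corresponds to $X$ indecomposable in $\CC$ by Krull--Schmidt. Part $(2)$ is the formal dual: one tests injectivity against the admissible monics $\left(\begin{smallmatrix}\left[\begin{smallmatrix}1\\0\end{smallmatrix}\right]\\\left[\begin{smallmatrix}1\\0\end{smallmatrix}\right]\end{smallmatrix}\right)$ and finds that an injective must, up to isomorphism, be a sum of $(X\xrightarrow{1}X)$ and $(X\to 0)$; here one should either invoke the opposite-category symmetry of the cw-structure (note $\CE^{\rm cw}$ is visibly self-dual under reversing arrows in $\CC$) or mirror the diagram chase directly.

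Finally, ``enough projectives'': given any $\left(\begin{smallmatrix}X_1\\X_2\end{smallmatrix}\right)_f$, I would exhibit the deflation from the projective object $(X_1\xrightarrow{1}X_1)\oplus(0\to X_2)=\left(\begin{smallmatrix}X_1\\X_1\oplus X_2\end{smallmatrix}\right)_{\left[\begin{smallmatrix}1\\0\end{smallmatrix}\right]}$, with map $\left(\begin{smallmatrix}1\\ [f~\ 1]\end{smallmatrix}\right)\colon\left(\begin{smallmatrix}X_1\\X_1\oplus X_2\end{smallmatrix}\right)\to\left(\begin{smallmatrix}X_1\\X_2\end{smallmatrix}\right)_f$; this is degreewise split epic (identity in degree $1$, split epic $[f~\ 1]$ in degree $2$), hence an admissible epic in $\CE^{\rm cw}$ onto our object from a projective. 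Dually, the monic into $(X_1\xrightarrow{1}X_1)\oplus(X_2\to 0)$ witnesses enough injectives.

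\textbf{Main obstacle.} The one genuinely delicate point is the ``only if'' directions of $(1)$ and $(2)$: showing that projectivity/injectivity with respect to the cw-structure \emph{forces} the claimed split-summand decomposition of $f$. Testing against a single conflation is not enough; one has to choose the test conflation cleverly in terms of $f$ (essentially pushing out along $f$ to manufacture an obstruction), and then use idempotent-completeness of $\CC$ to cleave the resulting idempotent and land on the normal forms $(X\xrightarrow{1}X)$, $(0\to X)$. Transporting this back and forth through the equivalence ${\rm H}(\CC)\simeq{\rm H}({\rm proj}\mbox{-}\CC)\subseteq{\rm H}(\mmod\CC)$ — and checking that cw-projectives on the small side are exactly the ${\rm H}({\rm proj}\mbox{-}\CC)$-objects that are $\Ext^1$-projective relative to the ambient abelian category — is the step where care is needed; the rest is routine diagram chasing.
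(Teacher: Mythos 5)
Your overall strategy is sound and in fact more ambitious than the paper's own proof, which disposes of parts $(1)$ and $(2)$ by citing \cite[Corollary 3.2]{Ba} and only writes out the two explicit conflations needed for the last claim. Your deflation $\bigl(\begin{smallmatrix}X_1\\ X_1\oplus X_2\end{smallmatrix}\bigr)\rt\bigl(\begin{smallmatrix}X_1\\ X_2\end{smallmatrix}\bigr)_f$ is, up to reordering summands and signs, exactly the first of the paper's two sequences, and your verification that $(X\st{1}\rt X)$ and $(0\rt X)$ are projective (lift degreewise through a section, then check the compatibility square) is correct. The detour through ${\rm H}({\rm proj}\mbox{-}\CC)\subseteq{\rm H}(\mmod\CC)$ is unnecessary overhead: projectivity in an exact category just means that every deflation onto the object splits, and this can be tested directly in ${\rm H}(\CC)$.

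There are two concrete problems. First, your injective copresentation is wrong as written: there is in general no inflation of $(X_1\st{f}\rt X_2)$ into $(X_1\st{1}\rt X_1)\oplus(X_2\rt 0)$, since its degree-two component would have to be a split monomorphism $X_2\rt X_1$. The correct target is $(X_2\st{1}\rt X_2)\oplus(X_1\rt 0)$ with inflation $\bigl(\bigl[\begin{smallmatrix}f\\ 1\end{smallmatrix}\bigr],1\bigr)$, which is the paper's second displayed sequence. Second, the ``only if'' direction of $(1)$, which you rightly flag as the main obstacle, is left genuinely open: ``the only way this succeeds for every such conflation is that $f$ split-injects $X_1$'' is a claim, not an argument. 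In fact a single conflation suffices, namely the one you already used to witness enough projectives. If $(X_1\st{f}\rt X_2)$ is projective, that deflation splits, so the object is a direct summand of $(0\rt X_2)\oplus(X_1\st{1}\rt X_1)$. Now observe that an object of ${\rm H}(\CC)$ lies in the additive closure of the objects $(0\rt X)$ and $(Y\st{1}\rt Y)$ exactly when its structure map is a split monomorphism (idempotent completeness of $\CC$ splits off the complement of the image), and that split monomorphisms are closed under direct summands in ${\rm H}(\CC)$: if $(\iota_1,\iota_2)$ and $(\pi_1,\pi_2)$ exhibit $(A\st{g}\rt B)$ as a summand of $(A'\st{g'}\rt B')$ and $r'g'=1_{A'}$, then $(\pi_1 r'\iota_2)g=\pi_1 r' g'\iota_1=\pi_1\iota_1=1_A$. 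Hence $f$ is a split monomorphism, giving the normal form $(X_1\st{1}\rt X_1)\oplus(0\rt C)$; indecomposability then kills one summand and forces the remaining object of $\CC$ to be indecomposable (no Krull--Schmidt hypothesis is needed for this). Part $(2)$ follows by applying the same argument in ${\rm H}(\CC^{\rm op})$, under the identification ${\rm H}(\CC)^{\rm op}\simeq{\rm H}(\CC^{\rm op})$ which visibly preserves $\CE^{\rm cw}$, as you suggest.
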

\begin{proof}
This should be compared to \cite[Corollary 3.2]{Ba}. We just remark that the last claim follows from the short exact sequences
  $$\xymatrix@1{0\ar[r] & {\left(\begin{smallmatrix} 0\\ X_1\end{smallmatrix}\right)}_{0}
		\ar[rr]^-{\left(\begin{smallmatrix} 0 \\ {\left[\begin{smallmatrix}f\\
			-1\end{smallmatrix}\right]}
			\end{smallmatrix}\right)}
		& & {\left(\begin{smallmatrix}0\\ X_2\end{smallmatrix}\right)}_{0}\oplus{\left(\begin{smallmatrix}X_1\\
			X_1\end{smallmatrix}\right)}_{1}\ar[rr]^-{\left(\begin{smallmatrix}1\\\left[\begin{smallmatrix} 1 &f
			\end{smallmatrix}\right]
			\end{smallmatrix}\right)}& &
		{\left(\begin{smallmatrix}X_1\\X_2\end{smallmatrix}\right)}_{f}\ar[r]& 0 }$$
and
$$\xymatrix@1{0\ar[r] & {\left(\begin{smallmatrix}X_1\\X_2\end{smallmatrix}\right)_f}\ar[rr]^{\left(\begin{smallmatrix}{\left[ \begin{smallmatrix}f\\ 1\end{smallmatrix}\right]} \\ 1\end{smallmatrix}\right)}
		& & {\left(\begin{smallmatrix}X_2\\
			X_2\end{smallmatrix}\right)_{1}\oplus{\left(\begin{smallmatrix} X_1\\ 0\end{smallmatrix}\right)_{0}}}\ar[rr]^-{\left(\begin{smallmatrix}{\left[\begin{smallmatrix}-1 & f\end{smallmatrix}\right]}\\ 0
			\end{smallmatrix}\right)} & &
		{\left(\begin{smallmatrix}X_2\\ 0\end{smallmatrix}\right)}\ar[r] & 0} $$
in $\mathcal{E}^{\rm{cw}}$.
\end{proof}

\vspace{.2 cm}
{\it Exact structure {\bf E2}}. Now assume $\CC$ is an extension-closed subcategory of $\mmod\Lambda$ for an Artin algebra $\Lambda$. We may consider $\CC$ as an exact category through the structure induced by the abelian category $\mmod \La$. Then also ${\rm H}(\CC)$, as an extension-closed subcategory of the abelian category $\CH$ is endowed  with the {\it canonical exact structure} inherited from $\CH$, still denoted by ${\rm H}(\CC)$. We also keep the cw-exact structure ${\rm H}^{\rm cw}(\CC)$ defined by degree-wise split sequences. It will be indicated in the next section that if $\CC$ is a $k$-dualizing variety, then ${\rm H}^{\rm cw}(\CC)$ admits almost split sequences. Further, the canonical exact category ${\bf E2}={\rm H}(\CC)$ admits almost split sequences provided $\CC$ is a $k$-dualizing subvariety of $\mmod\Lambda$. It also becomes clear how the canonical exact category ${\rm H}(\CC)$ inherits almost split sequence from ${\rm H}^{\rm cw}(\CC)$ in the latter case. However, for technical reasons, we have to defer the proofs until next section.

\begin{remark} Suppose for a moment that $\CC$ is further functorially finite in $\mmod\Lambda$. In this case, another approach one may take to show that ${\rm H}(\CC)$ has almost split sequences is to explore when ${\rm H}(\CC)$ is functorially finite in $\CH$. This seems natural in view of the fact that, by \cite[Theorem 2.4]{AS81}, any functorially finite extension-closed subcategory of $\CH$ admits almost split sequences.
\end{remark}

\vspace{.1 cm}

{\it Exact structure {\bf E3}}. Restricting to the case where $\CC=\mmod\Lambda$, in the  last part of this section, we put a third exact structure on $\CH$ that will turn out in Section $7$ to be in connection with the stable Auslander-Reiten quiver of Auslander algebras; see Remark \ref{RMark-Frob}. 

\begin{definition} An indecomposable object in $\CH$ is said to be of type $(a)$ (resp. $(b)$, or $(c)$) provided it is isomorphic to $(0 \rt M)$ (resp. $(M\st{1}\rt M)$, or $(M\rt 0)$) for some $\Lambda$-module $M$. Further, an indecomposable object is said to be of type $(d)$ if it is isomorphic to $(P\st{f}\rt Q)$  where $P, Q$ are projective $\Lambda$-modules. 
\end{definition}

Let now $\CX$ be the smallest subcategory of $\CH$ containing  all objects of types $(a), (b), (c)$ and $(d)$. Let also $\CE_{\CX}$ be the class of all short exact sequences $0 \rt {\rm X}\rt {\rm Y}\rt {\rm Z}\rt 0$ in $\CH$  such that the induced sequence
$$0 \rt \Hom_{\CH}({\rm V} , {\rm X})\rt \Hom_{\CH}({\rm V}, {\rm Y})\rt \Hom_{\CH}({\rm V}, {\rm Z})\rt 0$$
is exact for every $V \in \CX$. We know from \cite{AS93} and \cite{Bu} that $\CE_{\CX}$ induces an exact structure on $\CH$ which is denoted by ${\bf E3}=\CH_{\CX}= (\CH, \CE_{\CX})$. One infers from \cite[Theorem 1.12]{AS93} that the exact category $\CH_{\CX}$ has enough projectives and enough injectives.  Denote by $\CP(\CH_{\CX})$ (resp. $\CI(\CH_{\CX})$) the subcategory of projective (resp. injective) objects in $\CH_{\CX}$. In view of \cite[Corollary 1.6 and  Proposition 1.10]{AS93},  we have
$\CP(\CH_{\CX})=\CX\cup {\rm proj}\mbox{-}\CH$ and  $\CI(\CH_{\CX})=\tau_{\CH}(\CX)\cup {\rm inj}\mbox{-}\CH$, where ${\rm proj}\mbox{-}\CH$ and ${\rm inj}\mbox{-}\CH$ stand respectively for the subcategories of projective and injective objects in $\mathcal{H}$ and $\tau_\CH$ is the Auslander-Reiten translation of $\CH$.  We exploit \cite[Proposition 1.9]{AS93} to examine the almost split sequences in $\CH_{\CX}$;  it turns out that an almost split sequence $0 \rt {\rm X}\rt {\rm Y}\rt {\rm Z}\rt 0$ in $\CH$ is an almost split sequence in $\CH_{\CX}$ if and only if neither ${\rm X} \in \CI({\CH}_{\CX})$ nor ${\rm Z} \in \CP({\CH}_{\CX})$.

\section{Interplay between morphism and functor categories}

Until further notice, we assume throughout the section that $\CC$ is a dualizing $k$-variety. In this section, we will be involved with a functor going from morphism category to the functor category, originally defined and studied in \cite{A2} and then reconsidered in \cite{HM}.  This functor is our main tool to exchange between these two categories. The construction is based on the Yoneda functor.

\begin{construction}\label{Construction-Morphism-Functor}
 Let $(X_1\st{f}\rt X_2)$ be an object of ${\rm H}(\CC)$. Define
$$ (X_1\st{f}\rt X_2)\st{\Theta}\mapsto\ \Coker(\CC(-, X_1)\st{\CC(-, f)}\lrt \CC(-, X_2)).$$	
If $h=\left(\begin{smallmatrix}
	h_1\\ h_2
	\end{smallmatrix}\right):X=\left(\begin{smallmatrix}
	X_1\\ X_2
	\end{smallmatrix}\right)_f\rt \left(\begin{smallmatrix}
	X'_1\\ X'_2
	\end{smallmatrix} \right)_{f'}=X'$  is a morphism in $\rm{H}(\La)$, then we let $\Theta (h)$ be the unique morphism $\sigma$ that makes the following diagram commute.
	
	\[\xymatrix{\Hom_{\CC}(-, X_1) \ar[rr]^{\Hom_{\CC}(-, f)} \ar[d]^{\Hom_{\CC}(-, h_1)} && \Hom_{\CC}(-, X_2) \ar[r] \ar[d]^{\Hom_{\CC}(-, h_2)} & \Theta(\mathrm{X})  \ar[r] \ar[d]^{\sigma} & 0 \\
	\Hom_{\CC}(-, X'_1)  \ar[rr]^{\Hom_{\CC}(-,f')} && \Hom_{\CC}(-, X'_2) \ar[r] & \Theta(\mathrm{X}')  \ar[r]  & 0.}\]
\end{construction}

\noindent It is routine to verify that these rules introduce a well-defined functor $\Theta: {\rm H}(\CC)\rt \mmod \CC$ (In forthcoming sections, we will use the symbol $\Theta_\CC $ to emphasize on $\CC$.) The purpose of this section is to study this functor from the perspective of almost split sequences. It turns out that $\Theta$ behaves well over such sequences. Firstly, we need to recall some facts on objective functors; more details are provided in the Appendix of \cite{RZ}. Let $F : \CC \lrt \mathcal{D}$ be an additive functor between additive categories. $F$ is called an objective functor if any morphism $f$ in $\CC$ with $F(f) = 0$ factors through an object $K$ of $\CC$ with $F(K) = 0$; such a $K$ is then called a kernel object of $F$. We say that the kernel of a functor $F$ is generated by a subcategory $\CX$ of $\CC$ if ${\rm add}\mbox{-}\CX$, the additive closure of $\CX$ in $\CC$, is the class of all kernel objects of $F$.

\vspace{.05 cm}

Let $F : \CC \lrt \mathcal{D}$ be a full, dense and objective functor and let the kernel of $F$ be generated by $\CX$. Then $F$ induces an equivalence $\overline{F}: \CC/ \CX  \lrt \mathcal{D}$ where the additive quotient category $\CC/ \mathcal{X}$ of $\CC$ with respect to $\mathcal{X}$ has the same objects as $\CC$ and the morphisms are defined via the rule
$$\CC/\mathcal{X}(X, Y):=\CC(X, Y)/\{\phi\mid \phi \ \text{factors through an object in } \ {\rm add}\mbox{-}\CX \}$$
for any pair of objects $X,Y$ of $\CC$.

\begin{theorem}\label{Theorem-morphism-functor}
The functor $\Theta:{\rm H}(\CC) \lrt \mmod \CC$ is full, dense and objective.  Thus, there exists an equivalence $\overline{\Theta}$ of categories that makes the following diagram commute.
\[\xymatrix{
{\rm H}(\CC)\ar[r]^{\Theta}\ar[d]^{\pi} & \mmod\CC\\
\frac{{\rm H}(\CC)}{\CV}\ar[ur]^{\overline{\Theta}}   \\
}\]
\noindent
Here, $\pi$ is the natural quotient map and $\CV$ is the full subcategory of ${\rm H}(\CC)$ generated by all finite direct sums of objects of type $(b)$ or $(c)$, that is to say, objects of the form $(M\st{1}\lrt M)$ and $(M\lrt 0)$, where  $M$ runs through the objects of $\CC$.
\end{theorem}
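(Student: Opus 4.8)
The plan is to verify the three properties of $\Theta$ — fullness, density, and objectivity — and then to identify its kernel objects precisely with $\CV$, so that the promised equivalence $\overline{\Theta}$ follows from the general nonsense about objective functors recalled just before the statement. Density is the easiest: every object of $\mmod\CC$ is by definition a cokernel $\Coker(\CC(-,X_1)\to\CC(-,X_2))$ of a map between representables, and by Yoneda every such map is induced by some $f\colon X_1\to X_2$ in $\CC$; hence $\Theta(X_1\st{f}\rt X_2)$ realizes any prescribed finitely presented functor. For fullness, given a morphism $\sigma\colon\Theta(X)\to\Theta(X')$ in $\mmod\CC$, I would lift it up the projective presentations: the projectivity of $\CC(-,X_2)$ in $\mmod\CC$ (these are the representables, hence projective) lets one lift the composite $\CC(-,X_2)\twoheadrightarrow\Theta(X)\st{\sigma}\rt\Theta(X')$ along the epimorphism $\CC(-,X_2')\twoheadrightarrow\Theta(X')$ to a map $\CC(-,X_2)\to\CC(-,X_2')$, which by Yoneda is $\CC(-,h_2)$ for some $h_2\colon X_2\to X_2'$. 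The resulting square on the right of Construction \ref{Construction-Morphism-Functor} commutes, and then a diagram chase (using that $\CC(-,X_1)$ surjects onto the kernel of $\CC(-,X_2)\to\Theta(X)$, which requires $\CC$ to have pseudokernels so that this kernel is again finitely generated, or more simply using the defining presentation directly) produces $h_1\colon X_1\to X_1'$ with $h_2 f = f' h_1$; then $(h_1,h_2)$ is a morphism in ${\rm H}(\CC)$ with $\Theta(h_1,h_2)=\sigma$.

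The heart of the matter is objectivity together with the identification of the kernel objects. Suppose $h=(h_1,h_2)\colon X\to X'$ satisfies $\Theta(h)=0$. Working in the commutative diagram of Construction \ref{Construction-Morphism-Functor}, $\Theta(h)=0$ means the composite $\CC(-,X_2)\st{\CC(-,h_2)}\rt\CC(-,X_2')\twoheadrightarrow\Theta(X')$ is zero, so $\CC(-,h_2)$ factors through $\CC(-,f')$, i.e. $h_2 = f' g$ for some $g\colon X_2\to X_1'$ (Yoneda again). Using this $g$, I want to factor $h$ through an object that $\Theta$ kills, namely a sum of copies of $(M\st{1}\rt M)$ and $(M\rt 0)$. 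The natural candidate is to write $h$ as a composite $X\to (X_2\st{1}\rt X_2)\oplus(X_1'\rt 0)\to X'$: the first map is $(f, 1_{X_2})$ into the first summand together with $(h_1', 0)$ into the second (choosing $h_1'$ appropriately, e.g. $h_1 - g f$ or its analogue), and the second map is built from $(h_2\,?, 1)$-type data and the inclusion, arranged so that the composite on the two levels recovers $(h_1,h_2)$; the relation $h_2 = f'g$ is exactly what makes the bookkeeping close up. Conversely one checks $\Theta$ annihilates every object of type $(b)$ and $(c)$: $\Theta(M\st{1}\rt M)=\Coker(\CC(-,M)\st{=}\rt\CC(-,M))=0$ and $\Theta(M\rt 0)=\Coker(\CC(-,M)\to 0)=0$. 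Thus the kernel objects are precisely the summands of objects of $\CV$, and since $\CV$ is already closed under direct sums and summands (being generated by the finitely many shapes of indecomposables of those two types), ${\rm add}\mbox{-}\CV=\CV$, so the kernel of $\Theta$ is generated by $\CV$.

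Once fullness, density, objectivity, and the computation ${\rm add}\mbox{-}(\text{kernel objects})=\CV$ are in hand, the recalled criterion immediately yields the induced equivalence $\overline{\Theta}\colon{\rm H}(\CC)/\CV\to\mmod\CC$ with $\overline{\Theta}\pi=\Theta$, which is exactly the commuting triangle in the statement. I expect the main obstacle to be the explicit factorization in the objectivity step — choosing the intermediate object and the two morphisms so that the two components of the composite literally equal $h_1$ and $h_2$ on the nose, rather than merely up to something $\Theta$-trivial; getting the matrices right (and confirming the chosen factorization genuinely lands in a finite sum of type-$(b)$/type-$(c)$ objects) is the one place where a careful, if routine, computation is unavoidable. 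Everything else is Yoneda plus the projectivity of representables in $\mmod\CC$.
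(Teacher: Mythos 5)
Your density and fullness arguments are fine, and your opening move in the objectivity step --- deducing from $\Theta(h)=0$ that $h_2=f'g$ for some $g\colon X_2\rt X_1'$ --- is exactly the right lever. The gap is in the factorization you then propose. The intermediate object $(X_2\st{1}\rt X_2)\oplus(X_1'\rt 0)$ cannot work: \emph{every} morphism $(X_1'\rt 0)\rt(X_1'\st{f'}\rt X_2')$ has top component $c\colon X_1'\rt X_1'$ with $f'c=0$ (this is the commutativity of the defining square; in particular $(1_{X_1'},0)$ is not a morphism of ${\rm H}(\CC)$ unless $f'=0$). With your first map the composite's top component is $gf+cb$ for some $b\colon X_1\rt X_1'$, so you would have to write $h_1-gf=cb$ with $f'c=0$; although $f'(h_1-gf)=0$, such a factorization through an endomorphism of $X_1'$ annihilated by $f'$ need not exist. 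For instance over $\Lambda=k[x]/(x^2)$, with target $(\Lambda\st{x\cdot}\rt\Lambda)$, source $(k\rt 0)$ and $h_1$ the socle inclusion, every admissible $c$ is multiplication by an element of $(x)$ and hence kills the socle, so $cb=0\neq h_1$.

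The repair is small but necessary: take the type-$(c)$ summand from the \emph{source}, not the target. Realize $(X_2\st{1}\rt X_2)\oplus(X_1\rt 0)$ as $\bigl(X_2\oplus X_1\xrightarrow{[1\ \ f]}X_2\bigr)$ and factor $h$ as
\[
\Bigl(\left[\begin{smallmatrix}0\\ 1\end{smallmatrix}\right],\,1_{X_2}\Bigr)
\ \ \text{followed by}\ \
\bigl([\,g\ \ h_1\,],\,h_2\bigr);
\]
the two commutativity conditions required are precisely $f'g=h_2$ and $f'h_1=h_2f$, and the composite is $(h_1,h_2)$ on the nose. (Applied to $1_{\mathrm Z}$ when $\Theta(\mathrm Z)=0$, this also yields that every kernel object is a direct summand of an object of $\CV$ --- the containment you asserted but did not actually verify; the paper proves it separately by splitting the epimorphism $(-,g)$ and using idempotent-completeness.) For comparison, the paper establishes objectivity by a heavier route: it regards $((-,h_1),(-,h_2))$ as a null-homotopic chain map between projective presentations and invokes Gillespie's theorem that such a map factors through a contractible complex, whose degree $1,0$ truncation is a sum of objects of types $(b)$ and $(c)$. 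Your direct approach is more elementary and perfectly adequate once the intermediate object is corrected as above.
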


\begin{proof}
$\Theta$ is dense; for take $F \in \mmod \CC$ with a projective presentation $(-, X) \st{(-, g)} \rt (-, Y) \rt F \rt 0$.  It is plain that $\Theta(X\st{g}\rt Y)=F.$
To see the fullness of $\Theta$, take two objects $(X\st{g}\rt Y)$ and $(X'\st{g'}\rt Y')$ of  ${\rm H}(\CC)$. As the representable functors $(-, Y)$ and $(-, Y')$ are projective, it follows that any morphism $\sigma: F=\Theta(X\st{g}\rt Y)\rt\Theta(X'\st{g'}\rt Y')=F'$ in $\mmod \CC$ might be lifted to a map from the augmented projective presentation $(-, X)\st{(-,g)}\rt(-, Y)\rt F\rt 0$ to $(-, X')\st{(-,g')}\rt(-, Y')\rt F'\rt 0$. Then using Yoneda's Lemma and the aforementioned construction, one obtains a morphism $h:(X\st{g}\rt Y)\rt (X'\st{g'}\rt Y')$ in ${\rm H}(\CC)$ with $\sigma=\Theta(h)$.

\vspace{.1 cm}

Now assume $\Theta(X \st{g} \rt Y)=0$, for some object $(X\st{g}\rt Y)$. Then by the construction, we have the exact sequence
$0 \rt (-, X) \st{(-, g)} \lrt (-, Y)  \rt 0$  in $\mmod \mathcal{C}$. One then observes that the identity map $1:Y\rt Y$ factors over $g$ via, say, $h:Y\rt X$. Therefore, $X={\rm Im} (h) \oplus {\rm Ker} (g)$. This leads to the decomposition $(X\st{g}\rt Y)= ({\rm Ker}(g)\rt 0) \oplus ({\rm Im}(h) \st{g \mid} \rt Y)$ where $g|$ is the restricted map which must be an isomorphism since $gh=1_Y$. This settles that the kernel of $\Theta$ is generated by $\CV$.
	
\vspace{.1 cm}

Finally, suppose $\Theta(h)=0$, for $h=(h, h'): (X\st{g} \rt Y) \rt (X' \st{g'} \rt Y')$ in ${\rm H}(\CC)$. Setting $F=\Theta(X\st{g}\rt Y)$ and $F'=\Theta(X'\st{g'}\rt Y')$, this induces a chain map between complexes of functors
	
\[\xymatrix{  \cdots\ar[r] & (-, Z_0)\ar[d]_{(-,\alpha_0)}\ar[r] & (-, X) \ar[d]^{(-, h)} \ar[r]^{(-, g)} & (-, Y) \ar[d]^{(-, h')} \ar[r] & F \ar[d]^{0} \ar[r] & 0 \\
\cdots\ar[r] & (-, Z'_0)\ar[r] & (-, X')\ar[r]^{(-, g')} & (-,Y' )\ar[r] & F' \ar[r] & 0}\]
raised by taking projective presentations of $F$ and $F'$. Evidently, this chain map is null-homotopic and, according to \cite[Corollary 3.5]{G}, factors through a contractible complex. As any contractible complex of functors might be imagined to be a direct sum of complexes of the form
$$\cdots\rt 0\rt (-, W)\st{1}\rt (-, W)\rt 0\rt\cdots$$ for various objects $W$ of $\CC$, this induces a commutative diagram

$$\xymatrix{    & \cdots \ar[r] & (-, Z_0)   \ar@/_1.25pc/@{.>}[dd]_<<<<<<{(-, \alpha_0)} \ar[d]  \ar[r]  & (-, X)  \ar@/_1pc/@{.>}[dd]_<<<<<<<<{(-, h)} \ar[d]  \ar[r]^{(-, g)}  &  (-, Y)  \ar@/_1.2pc/@{.>}[dd]_<<<<<{(-, h')} \ar[d]  \ar[r]  & 0    \\
		&\cdots \ar[r] & (-, Z_0\oplus X) \ar[r] \ar[d] & (-, X\oplus Y) \ar[r] \ar[d]  &   (-, Y)  \ar[d]  \ar[r]  & 0 \\
		& \cdots \ar[r] & (-, Z'_0)  \ar[r]  & (-, X')  \ar[r]^{(-, g')}  &(-,Y') \ar[r] & 0. }$$

\noindent Therefore, there exists a factorization of $h$ through the object $(X\rt 0)\oplus (Y\st{1}\rt Y)$, which is a kernel object according to the above paragraph. This shows that $\Theta$ is an objective functor. Now the existence of the equivalence $\overline{\Theta}$ comes up from observations prior to the theorem.
\end{proof}


Let us record here that applying a dual construction to the opposite category $\CC^{\rm op}$ results in a contravariant functor
$$\Theta':{\rm H}(\CC)\rt \mmod \CC^{\rm op}, \ \ \ (X\st{f}\rt Y)\mapsto  \Coker( \CC(Y, -)\st{\CC(f, -)}\lrt \CC(X, -)) $$
which is seen to induce a duality
$\overline{\Theta'}$ that makes the diagram
\[\xymatrix{
{\rm H}(\CC)\ar[r]^{\Theta'}\ar[d]^{\pi'} & \mmod\CC^{\rm op}\\
\frac{{\rm H}(\CC)}{\CV'}\ar[ur]^{\overline{\Theta'}}   \\
}\]
\noindent commute. Here, $\CV'$ is the full subcategory of ${\rm H}(\CC)$ generated by all finite direct sums of objects of type $(a)$ or $(b)$.

\vspace{.1 cm}

Consider the morphism category ${\rm H}(\CC)$, endowed with the exact structure given by $\CE^{\rm cw}$. According to Proposition \ref{projective-object}, $\CV$ (resp. $\CV'$) is nothing but the subcategory of injective (resp. projective) objects of the exact category ${\rm H}^{\rm cw}(\CC)$. Consequently, the factor categories $\rm{H}(\CC)/\CV'$ and $\rm{H}(\CC)/\CV$ are equivalent respectively to the projectively and injectively stable categories $\underline{{\rm H}^{\rm cw}}(\CC)$ and $\overline{{\rm H}^{\rm cw}}(\CC)$. Hence the following proposition emerges to settle that ${\rm H}^{\rm cw}(\CC)$ admits almost split sequences.

\begin{proposition}\label{Prop.exsiyences-AR-comwise}
For a dualizing $k$-variety $\CC$, the following statements hold.
\begin{itemize}
\item [$(1)$] ${\rm H}^{\rm cw}(\CC)$ admits almost split sequences.
\item [$(2)$] ${\rm H}^{\rm cw}(\CC)$ has an Auslander-Reiten-Serre duality.
\end{itemize}
\end{proposition}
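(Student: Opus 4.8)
The plan is to deduce both assertions simultaneously from Lemma~\ref{ARS duality} applied to the exact category ${\rm H}^{\rm cw}(\CC)$. For this it is enough to check that ${\rm H}^{\rm cw}(\CC)$ is a $k$-linear Ext-finite Krull-Schmidt exact category and that one of its stable categories is a dualizing $k$-variety.

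First I verify the hypotheses of Lemma~\ref{ARS duality}. That $({\rm H}(\CC),\CE^{\rm cw})$ is an exact category is Proposition~\ref{exact structure on H}. Being a dualizing $k$-variety, $\CC$ is $k$-linear, Hom-finite, idempotent-complete and Krull-Schmidt, and each of these properties passes to ${\rm H}(\CC)$: a Hom-set in ${\rm H}(\CC)$ is a $k$-submodule of $\Hom_\CC(X_1,X_1')\oplus\Hom_\CC(X_2,X_2')$, hence a finitely generated $k$-module, and idempotents split degreewise, so the underlying additive category of ${\rm H}^{\rm cw}(\CC)$ is $k$-linear, Hom-finite and Krull-Schmidt. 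For Ext-finiteness I use the normal form recorded in Section~$3$: every $\CE^{\rm cw}$-conflation with prescribed end terms $(Y_1\st{g}\rt Y_2)$ and $(X_1\st{f}\rt X_2)$ is equivalent to one whose middle term is ${\left(\begin{smallmatrix}f & q\\ 0 & g \end{smallmatrix}\right)}$ for some $q\in\Hom_\CC(Y_1,X_2)$; passing to equivalence classes under Baer sum exhibits $\Ext^1_{{\rm H}^{\rm cw}(\CC)}\big((Y_1\st{g}\rt Y_2),(X_1\st{f}\rt X_2)\big)$ as a quotient of the $k$-module $\Hom_\CC(Y_1,X_2)$, which is finitely generated because $\CC$ is Hom-finite. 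Hence ${\rm H}^{\rm cw}(\CC)$ is Ext-finite.

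Next I identify a stable category. By Proposition~\ref{projective-object} the injective objects of ${\rm H}^{\rm cw}(\CC)$ are exactly the objects of $\CV$ (finite direct sums of objects of types $(b)$ and $(c)$), so its injectively stable category is ${\rm H}(\CC)/\CV$; by Theorem~\ref{Theorem-morphism-functor}, $\overline{\Theta}$ induces an equivalence ${\rm H}(\CC)/\CV\simeq\mmod\CC$. Since $\CC$ is a dualizing $k$-variety, so is $\mmod\CC$ by \cite[Proposition 2.6]{AR74}; thus $\overline{{\rm H}^{\rm cw}}(\CC)\simeq\mmod\CC$ is a dualizing $k$-variety, i.e. condition~$(4)$ of Lemma~\ref{ARS duality} holds, whence conditions~$(1)$ and~$(2)$ of that lemma hold as well, which are precisely $(1)$ and $(2)$ of the proposition. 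Alternatively, one may use the contravariant functor $\overline{\Theta'}$ to see that the projectively stable category ${\rm H}(\CC)/\CV'$ is dual to $\mmod\CC^{\op}$, hence a dualizing $k$-variety, and argue via condition~$(3)$.

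I do not anticipate a real obstacle here: the structural content already sits in Theorem~\ref{Theorem-morphism-functor} and in the computation of the projectives and injectives of ${\rm H}^{\rm cw}(\CC)$ in Proposition~\ref{projective-object}, and the proposition is essentially their combination with Lemma~\ref{ARS duality}. The only point deserving a short argument is the Ext-finiteness of ${\rm H}^{\rm cw}(\CC)$, which is handled above through the normal form of $\CE^{\rm cw}$-conflations; a second mild bookkeeping point is checking that the injectively stable category of ${\rm H}^{\rm cw}(\CC)$ is indeed ${\rm H}(\CC)/\CV$, which follows at once from Proposition~\ref{projective-object}.
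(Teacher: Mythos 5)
Your proof is correct and follows essentially the same route as the paper: identify the injectively stable category of ${\rm H}^{\rm cw}(\CC)$ with $\mmod\CC$ via Theorem~\ref{Theorem-morphism-functor} and Proposition~\ref{projective-object}, note that $\mmod\CC$ is a dualizing $k$-variety by \cite[Proposition 2.6]{AR74}, and invoke Lemma~\ref{ARS duality}. The only difference is that you explicitly verify the Hom-finite, Krull-Schmidt and Ext-finite hypotheses of that lemma (the Ext-finiteness argument via the normal form of conflations is a nice touch), whereas the paper leaves these checks implicit.
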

\begin{proof}
The above observations along with Theorem \ref{Theorem-morphism-functor} provide the equivalences ${\rm H}(\CC)/\CV \simeq \mmod \CC\simeq \overline{{\rm H}^{\rm cw}}(\CC)$. Note that by \cite[Proposition 2.6]{AR74}, $\mmod \CC$ is a dualizing $k$-variety as well. Hence Lemma \ref{ARS duality} completes the proof.
\end{proof}

\begin{remark}
We already know from Proposition \ref{exact structure on H} that ${\rm H}^{\rm cw}(\CC)$ enjoys an exact structure ${\bf E1}$ defined by component-wise split sequences. We want to point out that, in general, $\Theta$ does not respect this structure. Said in other words, $\Theta$ is not an exact functor with respect to this exact structure. To see this, take an element $(X\st{f}\rt Y)$ where $X,Y$ are non-zero objects of $\CC$. Then 

$$\xymatrix@1{ {\left(\begin{smallmatrix} 0\\ Y\end{smallmatrix}\right)}_{0}
		\ar[rr]^-{\left(\begin{smallmatrix} 0 \\ 1 \end{smallmatrix}\right)}
		& & {\left(\begin{smallmatrix}X\\ Y\end{smallmatrix}\right)}_{f}\ar[rr]^-{\left(\begin{smallmatrix} 1 \\ 0
	\end{smallmatrix}\right)}& &{\left(\begin{smallmatrix}X \\ 0\end{smallmatrix}\right)}_{0}&  } \ \    $$
lies inside ${\CE}^{\rm cw}$. Applying $\Theta$ gives rise to the sequence $0\rt \CC(-, Y)\rt \Coker(\CC(-, f))\rt 0$ whose exactness would, according to the definition of $\Theta$, imply $X=0$.  
\end{remark}

We are now in a position to prove the assertion in previous section concerning the existence of almost split sequences in ${\rm H}(\CC)$ where $\CC$ is an extension-closed dualizing subvariety of $\mmod\Lambda$ for an Artin algebra $\Lambda$. The following lemma which is taken from \cite[Proposition 3.1]{MO} will be fruitful. While the special case $\CC=\mmod \La$ has been dealt with in \cite{MO}, the same proof still works for general $\CC$ as we consider here.

\begin{lemma}\label{LemmaBoundary}
	Assume  $\delta:  0 \rt  A\st{f} \rt B\st{g} \rt C \rt 0$ is  an almost split sequence in $\CC$. Then
	\begin{itemize}
		\item[$(1)$] The almost split sequence in ${\rm H}(\CC)$ ending at $(0\rt C)$ is of the form
			$$\xymatrix@1{  0\ar[r] & {\left(\begin{smallmatrix} A\\ A\end{smallmatrix}\right)}_{1}
			\ar[rr]^-{\left(\begin{smallmatrix} 1 \\ f\end{smallmatrix}\right)}
			& & {\left(\begin{smallmatrix}A\\ B\end{smallmatrix}\right)}_{f}\ar[rr]^-{\left(\begin{smallmatrix} 0 \\ g\end{smallmatrix}\right)}& &
			{\left(\begin{smallmatrix}0\\ C\end{smallmatrix}\right)}_{0}\ar[r]& 0. } \ \    $$		
	
		\item [$(2)$] The almost split sequence in ${\rm H}(\CC)$  ending at $(C\st{1}\rt C)$ is of the form
			$$\xymatrix@1{  0\ar[r] & {\left(\begin{smallmatrix} A\\ 0\end{smallmatrix}\right)}_{0}
			\ar[rr]^-{\left(\begin{smallmatrix} f \\ 0 \end{smallmatrix}\right)}
			& & {\left(\begin{smallmatrix} B\\ C\end{smallmatrix}\right)}_{g}\ar[rr]^-{\left(\begin{smallmatrix} g \\ 1\end{smallmatrix}\right)}& &
			{\left(\begin{smallmatrix}C \\ C\end{smallmatrix}\right)}_{1}\ar[r]& 0. } \ \    $$		
		\end{itemize}
\end{lemma}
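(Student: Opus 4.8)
The plan is to verify directly that each of the two displayed sequences is an almost split sequence in $\mathrm{H}(\CC)$, using the characterizations of left/right almost split morphisms together with the known almost split sequence $\delta$ in $\CC$. I would treat case $(1)$ in detail and then indicate that $(2)$ follows by a dual argument (applying $(1)$ to the opposite category, or simply mimicking the steps).

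First I would record that the displayed sequence in $(1)$ is indeed a conflation (in either the cw-exact structure or, when $\CC\subseteq\mmod\La$, the canonical one): reading it degree-wise, the top row is $0\rt A\st{1}\rt A\rt 0\rt 0$ and the bottom row is $0\rt A\st{f}\rt B\st{g}\rt C\rt 0$, both of which are exact in $\CC$; moreover the map $\left(\begin{smallmatrix}1\\f\end{smallmatrix}\right)$ is a kernel of $\left(\begin{smallmatrix}0\\g\end{smallmatrix}\right)$ and the latter is a cokernel of the former in $\mathrm{H}(\CC)$, which is a routine diagram check. Next I would argue that $\left(\begin{smallmatrix}0\\g\end{smallmatrix}\right)\colon \left(\begin{smallmatrix}A\\B\end{smallmatrix}\right)_f\rt\left(\begin{smallmatrix}0\\C\end{smallmatrix}\right)_0$ is right almost split. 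It is not a retraction since $g$ is not (as $\delta$ is not split). Now take any morphism $(\sigma_1,\sigma_2)\colon(Z_1\st{h}\rt Z_2)\rt(0\rt C)$ which is not a retraction; necessarily $\sigma_1=0$, so the commutativity condition is vacuous and $(\sigma_1,\sigma_2)$ amounts to a single map $\sigma_2\colon Z_2\rt C$. If $\sigma_2$ is not a retraction in $\CC$, then since $g$ is right almost split in $\CC$ it factors as $\sigma_2=g\rho$ for some $\rho\colon Z_2\rt B$, and then $(0,\rho)$ is a morphism $(Z_1\st{h}\rt Z_2)\rt(A\st{f}\rt B)_f$ (the square commutes since $f\cdot 0=0=\rho h$? — here one uses $\rho h=0$, which must be checked) lifting $(\sigma_1,\sigma_2)$; if instead $\sigma_2$ is a retraction in $\CC$, one shows directly that $(0,\sigma_2)$ still factors through $\left(\begin{smallmatrix}0\\g\end{smallmatrix}\right)$ after composing with a suitable endomorphism, or else $(\sigma_1,\sigma_2)$ would be a retraction in $\mathrm{H}(\CC)$, contrary to assumption. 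Dually I would check that $\left(\begin{smallmatrix}1\\f\end{smallmatrix}\right)\colon\left(\begin{smallmatrix}A\\A\end{smallmatrix}\right)_1\rt\left(\begin{smallmatrix}A\\B\end{smallmatrix}\right)_f$ is left almost split: it is not a section (else $\delta$ would split), and a morphism $(\tau_1,\tau_2)\colon(A\st{1}\rt A)\rt(Z_1\st{h}\rt Z_2)$ which is not a section corresponds to $\tau_1\colon A\rt Z_1$ not a section in $\CC$ (since $\tau_2=h\tau_1$); using that $f$ is left almost split in $\CC$ one factors $\tau_1$ through $f$ and assembles the required factorization through $\left(\begin{smallmatrix}1\\f\end{smallmatrix}\right)$.

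The main obstacle I anticipate is the bookkeeping in the "not a retraction / not a section'' case-splits: one has to be careful that a morphism in $\mathrm{H}(\CC)$ fails to be a retraction precisely when the relevant component behaves appropriately, and that the lift one builds genuinely commutes with the structure maps $h$ and $f$ (the conditions like $\rho h=0$ are exactly what make the lift a morphism in $\mathrm{H}(\CC)$, and verifying them uses that $\left(\begin{smallmatrix}0\\g\end{smallmatrix}\right)$ kills the relevant piece). An alternative, cleaner route that I would at least mention is to invoke Theorem~\ref{Theorem-morphism-functor} and Proposition~\ref{Prop.exsiyences-AR-comwise}: since $\Theta(0\rt C)$ is the simple functor $S_C$ at $C$ and the minimal projective presentation of $S_C$ is induced by $\delta$, the almost split sequence ending at $(0\rt C)$ in $\mathrm{H}^{\rm cw}(\CC)$ is forced to have the stated shape by uniqueness of almost split sequences together with the explicit description of $\Theta$ on morphisms; one then only needs the comparison of the cw-exact and canonical structures (deferred to the next section) to transfer the conclusion to $\mathrm{H}(\CC)$ in the extension-closed case. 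I would likely present the direct verification as the primary proof since it works uniformly, and cite \cite[Proposition 3.1]{MO} for the fact that the argument there carries over verbatim to general $\CC$.
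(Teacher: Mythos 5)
The paper gives no argument for this lemma at all: it simply quotes \cite[Proposition 3.1]{MO} and remarks that the proof there carries over from $\mmod\La$ to a general extension-closed $\CC$. Your primary plan --- verifying directly that $(0,g)$ is right almost split and $(1,f)$ is left almost split by reducing to the corresponding properties of $g$ and $f$ in $\CC$ --- is that proof in outline, and it is the right approach. But two steps are wrong as written. (i) In the right-almost-split step, your lift $(0,\rho)$ is generally not a morphism of ${\rm H}(\CC)$: from $\sigma_2h=0$ and $\sigma_2=g\rho$ you only get $g\rho h=0$, not $\rho h=0$. Take $(Z_1\st{h}\rt Z_2)=(A\st{f}\rt B)$ and $\sigma_2=g$; then $\rho=1_B$ and $\rho h=f\neq 0$. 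The correct lift is $(u,\rho)$ with $u\colon Z_1\rt A$ produced by the kernel property of $f$ (so that $fu=\rho h$); this is where exactness of $\delta$, and not merely the almost-split property of $g$, must be used. (ii) In the left-almost-split step, a morphism $(\tau_1,\tau_2)$ out of $(A\st{1}\rt A)$, which satisfies $\tau_2=h\tau_1$, fails to be a section precisely when $h\tau_1$ --- not $\tau_1$ --- fails to be a section in $\CC$ (the map $(1,f)$ itself has $\tau_1=1_A$ a section). Moreover a factorization through $(1,f)$ forces its first component to equal $\tau_1$, so the thing to be factored through $f$ is $h\tau_1\colon A\rt Z_2$, not $\tau_1\colon A\rt Z_1$. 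With these two corrections the direct verification closes.

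Your ``alternative, cleaner route'' does not exist. The sequence in $(1)$ has bottom row the non-split sequence $\delta$, so it is not a conflation of ${\rm H}^{\rm cw}(\CC)$ (your parenthetical claim to the contrary is false), and $(0\rt C)$ is a \emph{projective} object of ${\rm H}^{\rm cw}(\CC)$ by Proposition \ref{projective-object}, so Proposition \ref{Prop.exsiyences-AR-comwise} provides no almost split sequence ending at it. Indeed, the entire purpose of this lemma in the paper (see the proof of Proposition \ref{p almost split seq}) is to supply, within the canonical exact structure, the almost split sequences ending at objects of types $(a)$ and $(b)$, which are exactly the ones the cw-structure cannot see.
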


\begin{proposition}\label{p almost split seq}
Let $\CC$ be an extension-closed $k$-dualizing subvariety of $\mmod\Lambda$. Then the canonical exact category  ${\bf E2}={\rm H}(\CC)$ admits almost split sequences.
\end{proposition}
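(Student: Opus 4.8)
The plan is to deduce the existence of almost split sequences in the canonical exact category ${\rm H}(\CC)$ from the already-established existence of almost split sequences in the cw-exact category ${\rm H}^{\rm cw}(\CC)$ (Proposition \ref{Prop.exsiyences-AR-comwise}), together with the characterizations of projective and injective objects in both structures. First I would observe that $\CC$, being an extension-closed $k$-dualizing subvariety of $\mmod\Lambda$, is in particular idempotent-complete and admits pseudokernels, so that ${\rm H}^{\rm cw}(\CC)$ is defined; moreover $\CC$ has almost split sequences by Lemma \ref{ARS duality} (its stable category is a dualizing $k$-variety), so Lemma \ref{LemmaBoundary} applies. The key point is that the identity map on ${\rm H}(\CC)$ is an exact functor ${\rm H}^{\rm cw}(\CC)\to{\rm H}(\CC)$ (every degree-wise split conflation is a conflation in the canonical structure), and one needs to understand how conflations, admissible monics/epics, and irreducible maps transfer.

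The main body of the proof would proceed by taking an endo-local object $Z=(Z_1\st{h}\rt Z_2)$ in ${\rm H}(\CC)$ which is non-projective with respect to the canonical structure, and producing an almost split sequence in ${\rm H}(\CC)$ ending at $Z$; dually for non-injective objects. I would split into cases according to the type of $Z$. If $Z$ is of the form $(0\rt C)$ or $(C\st{1}\rt C)$, Lemma \ref{LemmaBoundary} already produces almost split sequences in ${\rm H}(\CC)$ (the lemma is stated for the canonical structure), so these cases are done. The remaining case is when $Z$ is not of one of these boundary forms; here the strategy is to show that the almost split sequence ending at $Z$ in ${\rm H}^{\rm cw}(\CC)$ — which exists by Proposition \ref{Prop.exsiyences-AR-comwise}(1) — is in fact already an almost split sequence in the canonical structure ${\rm H}(\CC)$. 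For this I would use the general principle (cf. \cite{AS93}) that if $(\CC,\CE_2)$ is a sub-exact-structure of $(\CC,\CE_1)$ on the same additive category, then an $\CE_2$-almost split sequence with end term $Z$ and start term $X$ remains $\CE_1$-almost split provided $Z$ is not $\CE_1$-projective and $X$ is not $\CE_1$-injective, because right (resp. left) almost split maps are detected by the same non-retraction (resp. non-section) condition, which is structure-independent. So the task reduces to checking that $\tau_{{\rm H}^{\rm cw}(\CC)}(Z)$ is not injective in the canonical ${\rm H}(\CC)$ whenever $Z$ is not projective there and not of boundary type.

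To carry this out I would combine Proposition \ref{projective-object} (which lists the indecomposable ${\rm H}^{\rm cw}$-projectives and ${\rm H}^{\rm cw}$-injectives as the types $(b),(c)$ and $(b),(a)$ respectively) with the known description of projectives and injectives in the abelian-type canonical category $\CH$ restricted to ${\rm H}(\CC)$, namely objects of the form $(P\st{1}\rt P)$, $(P\rt 0)$, $(0\rt I)$, $(Q\st{\text{inc}}\rt I)$-type data for $P$ projective, $I$ injective in $\CC$. The point is that the ${\rm H}^{\rm cw}$-injective objects of type $(a)$, i.e. $(C\rt 0)$, are \emph{not} canonical-injective unless $C$ is injective in $\CC$, and similarly type $(b)$ objects $(C\st1\rt C)$ are canonical-injective only in restricted situations; so one gets coincidence of the two notions of almost split sequence precisely outside an explicit exceptional list, and on that exceptional list Lemma \ref{LemmaBoundary} (or its dual) supplies the sequences directly. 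Finally, the endo-local/indecomposability bookkeeping — that $\tau_{{\rm H}^{\rm cw}}$ preserves endo-local objects and that every endo-local non-projective object of the canonical structure is, after removing canonical-projective summands that are ${\rm H}^{\rm cw}$-projective, handled by the above — would be assembled to conclude.

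\medskip

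I expect the main obstacle to be precisely the delineation of the exceptional cases: showing that for an endo-local canonical-non-projective $Z$ that is not of boundary type $(a)$ or $(b)$, the ${\rm H}^{\rm cw}$-almost split sequence ending at $Z$ has its starting term non-injective in the canonical structure, which requires a careful case analysis comparing the two lists of relative (in)projectives from Proposition \ref{projective-object} and the classical description of $\proj\mbox{-}\CH$, $\inj\mbox{-}\CH$, and in particular pinning down when an object of type $(c)$, namely $(C\rt 0)$, which is always ${\rm H}^{\rm cw}$-injective, happens to coincide with a canonical-injective object (forcing $C$ to be injective in $\CC$) versus when it does not. A secondary technical nuisance will be justifying the transfer-of-almost-split principle between the two exact structures in the present generality (exact categories rather than abelian categories), though this should follow from the structure-independence of the not-a-retraction / not-a-section conditions and a diagram chase, so it is more bookkeeping than genuine difficulty.
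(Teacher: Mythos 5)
Your proposal follows essentially the same route as the paper: the boundary objects of types $(a)$ and $(b)$ --- precisely the indecomposable projectives of ${\rm H}^{\rm cw}(\CC)$ listed in Proposition \ref{projective-object} --- are handled directly by Lemma \ref{LemmaBoundary}, and every other indecomposable non-projective object receives its almost split sequence from Proposition \ref{Prop.exsiyences-AR-comwise} in ${\rm H}^{\rm cw}(\CC)$, which is then observed to remain almost split in the canonical structure. The one point where you diverge is in declaring the ``main obstacle'' to be checking that the initial term of the cw-almost split sequence is not canonical-injective: that check is vacuous, and the conditions you import from \cite{AS93} are relevant only when passing from a \emph{larger} exact structure to a \emph{smaller} one, whereas here $\CE^{\rm cw}$ is contained in the canonical class of conflations. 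Since every $\CE^{\rm cw}$-conflation is already a canonical conflation, since the left/right almost split conditions are phrased purely in terms of sections, retractions and factorizations (hence are independent of the exact structure), and since a non-split conflation cannot begin at an injective object in any structure, the transfer is immediate --- exactly as the paper asserts --- and no comparison of the two lists of relative injectives is needed.
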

\begin{proof}
Let $ \mathrm{Z}$ be an indecomposable non-projective object in ${\rm H}(\CC)$. Assume first that $\mathrm{Z}$ is of either types $(0\rt X)$  or $(X\st{1}\rt X)$, for an object $X\in\CC$. Then since $\CC$ admits almost split sequences by \cite[Theorem 1.1]{AS81}, from Lemma \ref{LemmaBoundary} we infer that $\mathrm{Z}$ is the end term of an almost split sequence in ${\rm H}(\CC)$. Otherwise, $\mathrm{Z}$ is not projective in the exact category ${\rm H}^{\rm cw}(\CC)$ by Proposition \ref{projective-object}. Hence, by Proposition \ref{Prop.exsiyences-AR-comwise}, there exists  an almost split sequence ending at $\mathrm{Z}$ in the exact category ${\rm H}^{\rm cw}(\CC)$. However, following the definitions, it is easy to verify that this is an almost split sequence in ${\rm H}(\CC)$ as well.
\end{proof}

The following corollary is an immediate consequence of the arguments above.

\begin{corollary}\label{cor dg slplit almost split}
 Let $\CC$ be an extension-closed $k$-dualizing subvariety of $\mmod\Lambda$ and let
 $$\xymatrix@1{ 0 \ar[r] & {\left(\begin{smallmatrix} X_1\\ X_2\end{smallmatrix}\right)}_{f}
		\ar[rr]^-{\left(\begin{smallmatrix} \phi_1 \\ \phi_2 \end{smallmatrix}\right)}
		& & {\left(\begin{smallmatrix}Z_1\\ Z_2\end{smallmatrix}\right)}_{h}\ar[rr]^-{\left(\begin{smallmatrix} \psi_1 \\ \psi_2\end{smallmatrix}\right)}& &
		{\left(\begin{smallmatrix}Y_1 \\ Y_2\end{smallmatrix}\right)}_{g}\ar[r]& 0  } \ \    $$
be an almost split sequence in ${\rm H}(\CC)$. Then the sequences $0 \rt X_i\st{\phi_i}\rt Z_i\st{\Psi_i} \rt Y_i\rt 0$, $i=1, 2$, split provided that either of the following situations occur.
		\begin{itemize}
	\item[$(1)$] The terminal term $(Y_1\st{g}\rt Y_2)$ is not of type $(a)$ or $(b)$.
	\item [$(2)$] 	The initial  term $(X_1\st{f}\rt X_2)$ is not of type $(b)$ or $(c)$.
\end{itemize}	
\end{corollary}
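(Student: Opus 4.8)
The plan is to reduce the corollary to the comparison, set up in Section~3 and already exploited in the proof of Proposition~\ref{p almost split seq}, between the two exact structures carried by the morphism category. The pivotal remark is this: every $\CE^{\rm cw}$-conflation is in particular a short exact sequence of $\CH$ with all terms in ${\rm H}(\CC)$, so $\CE^{\rm cw}$ is contained in the canonical class of conflations; moreover the conditions defining an almost split sequence — that $u$ be left almost split and not a section, and $v$ be right almost split and not a retraction — refer only to the underlying additive category ${\rm H}(\CC)$ and not to the ambient exact structure. Hence an almost split sequence of the exact category ${\rm H}^{\rm cw}(\CC)$ is automatically an almost split sequence of the canonical exact category ${\rm H}(\CC)$, and such a sequence is, by the very definition of $\CE^{\rm cw}$, degree-wise split.

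For part $(1)$, start from an almost split sequence $\delta$ in ${\rm H}(\CC)$ ending at $\mathrm{Z}=(Y_1\st{g}\rt Y_2)$; this terminal term is indecomposable, hence endo-local, in the Krull--Schmidt category ${\rm H}(\CC)$. If $\mathrm{Z}$ is neither of type $(a)$ nor of type $(b)$, then by Proposition~\ref{projective-object}$(1)$ it is not an indecomposable projective of ${\rm H}^{\rm cw}(\CC)$, and being indecomposable it is therefore not projective in ${\rm H}^{\rm cw}(\CC)$ at all. Proposition~\ref{Prop.exsiyences-AR-comwise}$(1)$ then provides an almost split sequence $\delta'$ in ${\rm H}^{\rm cw}(\CC)$ ending at $\mathrm{Z}$. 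By the pivotal remark $\delta'$ is also an almost split sequence in ${\rm H}(\CC)$, so the uniqueness of almost split sequences forces $\delta\cong\delta'$ as sequences in ${\rm H}(\CC)$; since $\delta'$ lies in $\CE^{\rm cw}$ and $\CE^{\rm cw}$ is closed under isomorphism, so does $\delta$. This says precisely that each $0\rt X_i\st{\phi_i}\rt Z_i\st{\psi_i}\rt Y_i\rt 0$ splits. Part $(2)$ is entirely dual: if $(X_1\st{f}\rt X_2)$ is neither of type $(b)$ nor of type $(c)$, then by Proposition~\ref{projective-object}$(2)$ it is not injective in ${\rm H}^{\rm cw}(\CC)$, so the initial-term half of Proposition~\ref{Prop.exsiyences-AR-comwise}$(1)$ furnishes an almost split sequence $\delta''$ in ${\rm H}^{\rm cw}(\CC)$ starting at $(X_1\st{f}\rt X_2)$, which is again an almost split sequence in ${\rm H}(\CC)$; uniqueness gives $\delta\cong\delta''$, whence $\delta\in\CE^{\rm cw}$ as before.

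I do not anticipate a serious obstacle: this is essentially the bookkeeping already implicit in the closing line of the proof of Proposition~\ref{p almost split seq}. The single point I would take care to state cleanly is the pivotal remark — that passing from ${\rm H}^{\rm cw}(\CC)$ to the canonical exact structure ${\rm H}(\CC)$ preserves the property of being an almost split sequence — since it is what turns the mere existence statement of Proposition~\ref{Prop.exsiyences-AR-comwise} into information about the (unique) almost split sequences of the canonical structure. The remaining ingredients, namely the classification of the indecomposable projective and injective objects of ${\rm H}^{\rm cw}(\CC)$ from Proposition~\ref{projective-object} and the standard uniqueness of almost split sequences in a Krull--Schmidt exact category, enter only formally.
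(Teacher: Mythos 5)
Your proof is correct and follows essentially the same route as the paper: Corollary \ref{cor dg slplit almost split} is stated there as an immediate consequence of the proof of Proposition \ref{p almost split seq}, whose closing step is exactly your ``pivotal remark'' that an almost split sequence in ${\rm H}^{\rm cw}(\CC)$ is also one in the canonical exact category ${\rm H}(\CC)$, combined with Propositions \ref{projective-object} and \ref{Prop.exsiyences-AR-comwise} and uniqueness of almost split sequences. You have merely made explicit the bookkeeping the authors leave implicit, and all the details (containment $\CE^{\rm cw}$ in the canonical conflations, structure-independence of the left/right almost split conditions, and the dual argument for the initial term) check out.
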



We now turn to show that $\Theta$ respects almost split sequences; so we return to the setting that $\CC$ is a dualizing $k$-variety. Let $\mathrm{Y}=(Y_1\st{g}\rt Y_2)$ be an indecomposable object in $\rm{H}(\CC)$ which is not of type $(a)$, $(b)$ or $(c)$. Take
	$$\delta:\,\,\xymatrix@1{  & {\left(\begin{smallmatrix} X_1\\ X_2\end{smallmatrix}\right)}_{f}
		\ar[rr]^-{\left(\begin{smallmatrix} \phi_1 \\ \phi_2 \end{smallmatrix}\right)}
		& & {\left(\begin{smallmatrix}Z_1\\ Z_2\end{smallmatrix}\right)}_{h}\ar[rr]^-{\left(\begin{smallmatrix} \psi_1 \\ \psi_2\end{smallmatrix}\right)}&&
		{\left(\begin{smallmatrix}Y_1 \\ Y_2\end{smallmatrix}\right)}_{g}&  } \ \    $$
to be the almost split sequence in $\rm{H}^{{\rm cw}}(\CC)$ ending at $\mathrm{Y}$.	
For simplicity, set $\mathrm{Z}=(Z_1\st{h}\rt Z_2)$,  $\mathrm{X}=(X_1\st{f}\rt X_2)$, $\phi=(\phi_1, \phi_2)$ and $\psi=(\psi_1, \psi_2)$.	
Note that $\delta$ induces degree-wise split sequences and, applying $\Theta$, one gets the commutative diagram with exact rows
	$$\xymatrix{& 0 \ar[d] & 0 \ar[d] & 0 \ar[d]\\
		0 \ar[r] & K_1 \ar[d] \ar[r] &  K_2 \ar[d]^i
		\ar[r]^{\eta} & K_3 \ar[d]^{\lambda}&\\
		0 \ar[r] &(-, X_1)\ar[d]^{(-,f)} \ar[r] & (-, Z_1)
		\ar[d]^{(-,h)}\ar[r]^{(-,\psi_1)} & (-, Y_1) \ar[d]^{(-,g)} \ar[r] & 0\\
		0 \ar[r] &(-, X_2) \ar[d] \ar[r] & (-, Z_2)
		\ar[d] \ar[r]^{(-, \psi_2)} & (-, Y_2) \ar[d] \ar[r] & 0\\
		& \Theta(\mathrm{X}) \ar[r]^{\Theta(\phi)}\ar[d] & \Theta(\mathrm{Z}) \ar[r]^{\Theta(\psi)}\ar[d]  & \Theta(\mathrm{Y}) \ar[r]\ar[d] &0\\ & 0  & 0  & 0 &  }
	$$
\noindent in $\mmod\CC$ whose bottom row is indeed the image $\Theta(\delta)$ of $\delta$ under the functor $\Theta$.

\begin{lemma}
The map $\eta$ in the above diagram is an epimorphism. As an upshot, $ \Theta(\delta)$ is a short exact sequence in $\mmod \CC.$
\end{lemma}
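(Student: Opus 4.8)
The plan is to extract the exact structure of the displayed diagram and feed it into the snake lemma. Since $\CC$ is a dualizing $k$-variety it admits pseudokernels, so $\mmod\CC$ is abelian and $K_1,K_2,K_3$ lie in $\mmod\CC$. The three columns are exact by construction: column $i$ is the four-term exact sequence $0\rt K_i\rt(-,\cdot)\rt(-,\cdot)\rt\Theta(\cdot)\rt 0$ coming from the definitions of the $K_i$ and of $\Theta$. As already observed, $\delta$ induces degree-wise split exact sequences in $\CC$, and the Yoneda functor $\CC(-,\cdot)$ preserves split exactness; hence the two middle rows $0\rt(-,X_i)\rt(-,Z_i)\rt(-,Y_i)\rt 0$ ($i=1,2$) are split exact. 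Thus $(-,f),(-,h),(-,g)$ constitute a morphism of short exact sequences, and the snake lemma produces
$$0\rt K_1\rt K_2\st{\eta}\rt K_3\st{\pa}\rt\Theta(\mathrm{X})\st{\Theta(\phi)}\rt\Theta(\mathrm{Z})\st{\Theta(\psi)}\rt\Theta(\mathrm{Y})\rt 0,$$
since $\Ker(-,f)=K_1$, $\Coker(-,f)=\Theta(\mathrm{X})$, and likewise for $h,g$. Therefore $\eta$ is an epimorphism if and only if $\pa=0$, equivalently if and only if $\Theta(\phi)$ is a monomorphism; and once this holds the tail of the six-term sequence is precisely the assertion that $\Theta(\delta)$ is a short exact sequence. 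So the lemma reduces to proving that $\Theta(\phi)$ is monic, and this is the only point at which we shall use that $\delta$ is \emph{almost split}.

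Next I would cut the problem down to a single element. As $\CC$ admits pseudokernels, fix a pseudokernel $w\colon W\rt Y_1$ of $g$; then $K_3=\im(\CC(-,w))$ with $w\in K_3(W)$ a generator, so $\pa=0$ as soon as $\pa_W(w)=0$ in $\Theta(\mathrm{X})(W)=\CC(W,X_2)/\im(\CC(-,f))(W)$. Choosing sections $s_i\colon Y_i\rt Z_i$ of $\psi_i$ (available since the middle rows split), one may compute $\pa_W(w)$ from the lift $s_1w$ of $w$: since $gw=0$ we get $\psi_2(hs_1w)=g\psi_1s_1w=0$, so $hs_1w\in\Ker\psi_2=\im\phi_2$, and decomposing $hs_1=\phi_2p+s_2g$ (its $\im\phi_2$-component being $\phi_2p$ for a unique $p\colon Y_1\rt X_2$, its $\im s_2$-component being $s_2\psi_2hs_1=s_2g$) yields $hs_1w=\phi_2(pw)$, hence $\pa_W(w)=[pw]$. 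One may moreover bring left almost splitness of $\phi$ to bear: the canonical morphism $(f,1_{X_2})\colon\mathrm{X}\rt(X_2\st{1}\rt X_2)$ is a section exactly when $f$ is an isomorphism, i.e.\ when $\mathrm{X}$ is a summand of the injective object $(X_2\st{1}\rt X_2)$ of ${\rm H}^{\rm cw}(\CC)$ and hence injective; but the initial term $\mathrm{X}$ of the almost split sequence $\delta$ is not injective (otherwise the admissible monic $\phi$ would split, and $\delta$ with it). So $(f,1_{X_2})$ factors through $\phi$, giving $(r_1,r_2)\colon\mathrm{Z}\rt(X_2\st{1}\rt X_2)$ with $r_1=r_2h$, $r_1\phi_1=f$, $r_2\phi_2=1_{X_2}$, and then $pw=r_2hs_1w$. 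The task is thus to show that $pw$ factors through $f$.

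This last step is the real content and, I expect, the main obstacle: a direct diagram chase turns out to be circular, as ``$pw\in\im(\CC(-,f))(W)$'' keeps reducing to ``$w$ lies in the image of $\eta$'', so one genuinely needs extra structural information about $\delta$. The route I would pursue is via the explicit shape of the almost split sequences of ${\rm H}^{\rm cw}(\CC)$. For $\mathrm{Y}$ of type $(a)$ or $(b)$ this is supplied by (the analogue of) Lemma~\ref{LemmaBoundary}; for the remaining indecomposable non-projective $\mathrm{Y}$ one transports the almost split sequence of $\mmod\CC$ ending at $\Theta(\mathrm{Y})$ back through the equivalence $\overline{\Theta}$ of Theorem~\ref{Theorem-morphism-functor}, using its dual $\overline{\Theta'}$ to identify the initial term $\mathrm{X}$; from the resulting normal form the monomorphy of $\Theta(\phi)$ is visible directly. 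Alternatively one can stay inside ${\rm H}^{\rm cw}(\CC)$ and exploit that $\psi$ is right \emph{minimal} together with the Auslander--Reiten--Serre formula $\Hom_{\underline{{\rm H}^{\rm cw}}(\CC)}(\mathrm{Y},\mathrm{Y})\cong D\Ext^1_{{\rm H}^{\rm cw}(\CC)}(\mathrm{Y},\mathrm{X})$ from Proposition~\ref{Prop.exsiyences-AR-comwise} to pin $pw$ down modulo $\im(\CC(-,f))(W)$; but this demands carefully tracking minimality through the chase, so I would prefer the first approach.
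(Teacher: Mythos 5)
Your snake-lemma framework is sound: the middle rows are split exact because $\delta\in\CE^{\rm cw}$ and the Yoneda embedding preserves split exactness, the six-term sequence is correct, and the lemma is indeed equivalent to the vanishing of the connecting map $\pa$, which by naturality reduces to $\pa_W(w)=0$ for a pseudokernel $w\colon W\rt Y_1$ of $g$. But the proposal stops exactly where the content begins: you yourself flag the factorization of $pw$ through $f$ as the main obstacle and offer only two unexecuted sketches. The first of these is essentially circular in context --- pulling the almost split sequence of $\mmod\CC$ ending at $\Theta(\mathrm{Y})$ back through $\overline{\Theta}$ presupposes a compatibility between $\overline{\Theta}$ and the two exact structures which is precisely what this part of the paper is in the process of establishing --- and the second is left undeveloped. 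You also apply the almost split hypothesis at the wrong end of $\delta$: left almost splitness of $\phi$, tested against $(f,1_{X_2})$, produces the retraction $r_2$ of $\phi_2$ but gives no grip on $pw$.

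The missing step is a single application of the \emph{right} almost split property of $\psi$. Since $gw=0$, the pair $(w,0)$ defines a morphism $(W\rt 0)\rt(Y_1\st{g}\rt Y_2)$ in ${\rm H}(\CC)$; it is not a retraction, so it factors through $(\psi_1,\psi_2)$ as $(a,0)$ for some $a\colon W\rt Z_1$ with $\psi_1a=w$ and $ha=0$. Thus the almost split hypothesis hands you a lift $a$ of $w$ that is annihilated by $h$: using $a$ in place of $s_1w$ in your computation of the connecting map gives $\pa_W(w)=0$ at once; equivalently, $(-,a)$ factors through $K_2=\Ker(-,h)$ and maps onto $w$ under $\eta$. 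This is exactly how the paper argues, phrased with an epimorphism $(-,P)\rt K_3$ rather than a pseudokernel. With this step supplied, the rest of your reduction goes through.
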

\begin{proof}
Let $(-, P)\st{ \sigma}\rt K_3\rt 0$ be an epimorphism in $\mmod \CC$ and let $d:P\rt Y_1$ be a morphism in $\CC$ which represents the composite $\la \sigma:(-, P)\rt K_3\rt (-, Y_1).$ Since $gd=0$, Yoneda's lemma induces a morphism $(d, 0):(P\rt 0)\rt Y$ which is plainly not a retraction as ${\rm Y}$ is not of type $(c)$. Hence it must factor over the right almost split map $(\psi_1, \psi_2)$ via, say, $(a, 0)$ for some $a:P\rt Z_1$ in $\CC$. Consequently, the map $(-, a)$ in $\mmod \CC$ satisfies $(-, h)(-, a)=0$. Adding that $(-, P)$ is a projective functor, this gives a map $\gamma: (-, P)\rt K_2$ in $\mmod\CC$ with $(-, a)=i\gamma$. Note that $\lambda\eta\gamma=(-, \psi_1)i\gamma=(-, \psi_1)(-, a)=\lambda\sigma$. But $\lambda$ is a monomorphism; thus $\eta\gamma=\sigma$ whence the surjectivity of $\eta$. The second claim comes up immediately from the Snake Lemma.
\end{proof}

The following theorem is another main result of the section.

\begin{theorem}\label{almostpreserving}
	Under the above notation, $\Theta(\delta)$ is an almost split sequence in $\mmod \CC$.
\end{theorem}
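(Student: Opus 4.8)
The plan is to show that $\Theta(\delta)$, which we already know is a short exact sequence in $\mmod\CC$ thanks to the preceding lemma, is in fact almost split. Since $\mmod\CC$ is a dualizing $k$-variety (hence Krull--Schmidt with split idempotents), it suffices to check that $\Theta(\psi)\colon\Theta(\mathrm{Z})\to\Theta(\mathrm{Y})$ is right almost split and that $\Theta(\mathrm{Y})$ is indecomposable and non-projective; alternatively one can check that $\Theta(\phi)$ is left almost split. I would prove the right-almost-split property of $\Theta(\psi)$ directly. First I would observe that $\Theta(\mathrm{Y})$ is non-projective in $\mmod\CC$: if it were projective, the exact sequence $(-,Y_1)\xrightarrow{(-,g)}(-,Y_2)\to\Theta(\mathrm{Y})\to 0$ would split, forcing $(-,g)$ to be a split monomorphism, hence $g$ a split monomorphism in $\CC$, which would put $\mathrm{Y}$ among the injective objects $\CV$ of ${\rm H}^{\rm cw}(\CC)$ (type $(b)$ or $(c)$) described in Proposition \ref{projective-object} --- but such objects lie in the kernel $\CV$ of $\Theta$, contradicting $\mathrm{Y}$ indecomposable non-projective with $\Theta(\mathrm{Y})\neq 0$. (One must also note $\Theta(\mathrm{Y})\neq 0$ for the same reason.) Indecomposability of $\Theta(\mathrm{Y})$ then follows from the fact that $\overline{\Theta}$ is an equivalence and $\mathrm{Y}\notin\CV$, so $\mathrm{Y}$ remains indecomposable in the quotient ${\rm H}(\CC)/\CV$.

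Next, for the right almost split property, take any morphism $\alpha\colon G\to\Theta(\mathrm{Y})$ in $\mmod\CC$ which is not a retraction; I must factor it through $\Theta(\psi)$. Choose a projective presentation $(-,W_1)\xrightarrow{(-,w)}(-,W_2)\to G\to 0$, so that $G=\Theta(\mathrm{W})$ with $\mathrm{W}=(W_1\xrightarrow{w}W_2)$. Since $(-,W_2)$ and $(-,W_1)$ are projective and $\Theta$ is full (Theorem \ref{Theorem-morphism-functor}), lift $\alpha$ to a morphism $a=(a_1,a_2)\colon\mathrm{W}\to\mathrm{Y}$ in ${\rm H}(\CC)$ with $\Theta(a)=\alpha$. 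The key claim is that $a$ is not a retraction in ${\rm H}^{\rm cw}(\CC)$: indeed if $a$ were a (split) epimorphism in the cw-exact structure it would split degree-wise, and applying $\Theta$ would make $\alpha$ a retraction --- more carefully, if $a$ admits a section $s$ then $\Theta(s)$ is a section of $\alpha$, contradiction; and if $a$ is merely an epimorphism that is not split one uses that $\mathrm{Y}$ is non-injective in ${\rm H}^{\rm cw}(\CC)$ (it is indecomposable non-projective, and by Proposition \ref{Prop.exsiyences-AR-comwise} sits at the right end of $\delta$, so it is in particular not injective, hence any epi onto it that is admissible splits). Thus $a$ is not a retraction, so by the right almost split property of $\psi=(\psi_1,\psi_2)$ in ${\rm H}^{\rm cw}(\CC)$ there is $b\colon\mathrm{W}\to\mathrm{Z}$ with $\psi b=a$. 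Applying $\Theta$ gives $\Theta(\psi)\Theta(b)=\Theta(a)=\alpha$, the desired factorization.

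There is a gap to be careful about: even if $a$ itself happens to be a retraction in ${\rm H}(\CC)$ (equivalently, $\mathrm{W}$ has $\mathrm{Y}$ as a direct summand), it does not immediately follow that $\alpha$ is a retraction, because $\Theta$ does not reflect retractions in general; however in that case $\mathrm{W}\cong\mathrm{Y}\oplus\mathrm{W}'$ and we can simply replace $\mathrm{W}$ by the complement in the argument, or observe that then $\alpha$ factors through $\Theta(\mathrm{Y})$ via the idempotent and split off the identity part, reducing to the case where the $\mathrm{Y}$-component of $a$ is in the radical. So the real content, and the step I expect to be the main obstacle, is the equivalence ``$\alpha$ not a retraction in $\mmod\CC$ $\Longrightarrow$ (a suitable lift) $a$ not a retraction in ${\rm H}^{\rm cw}(\CC)$.'' Handling this cleanly requires using that $\CV$ (= kernel of $\Theta$ = injectives of ${\rm H}^{\rm cw}(\CC)$) is precisely the class through which $\Theta$ kills morphisms, together with the Krull--Schmidt property to absorb summands of $\mathrm{W}$ lying in $\CV$. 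Once this is in place, the dual argument shows $\Theta(\phi)$ is left almost split --- or, more economically, we invoke that in a Krull--Schmidt exact category with ARS duality (which $\mmod\CC$ has, being a dualizing variety), a short exact sequence with indecomposable non-projective right term and right almost split deflation is automatically almost split --- and the proof is complete.
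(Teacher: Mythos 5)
Your main argument is exactly the paper's: lift a non-retraction $\alpha\colon G\to\Theta(\mathrm{Y})$ along projective presentations to a morphism $a\colon\mathrm{W}\to\mathrm{Y}$ in ${\rm H}(\CC)$ with $\Theta(a)=\alpha$, observe that $a$ cannot be a retraction, factor it through $\psi$ using that $\delta$ is almost split in ${\rm H}^{\rm cw}(\CC)$, and apply $\Theta$. Two remarks, one cosmetic and one substantive.

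The cosmetic one: the ``gap'' you worry about at the end is not there. A retraction is by definition a morphism admitting a section, and every additive functor preserves these: if $as=1_{\mathrm{Y}}$ then $\Theta(a)\Theta(s)=1$. Hence ``$\alpha$ not a retraction $\Rightarrow$ $a$ not a retraction'' is immediate, and no discussion of split epimorphisms in the exact structure, of non-injectivity of $\mathrm{Y}$, or of absorbing $\CV$-summands of $\mathrm{W}$ is needed. (Your aside that $\mathrm{Y}$ is non-injective \emph{because} it is the right end of an almost split sequence is also incorrect --- right ends are non-projective, left ends are non-injective --- but nothing in the argument depends on it.)

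The substantive one: the criterion you invoke to conclude is false as stated. A short exact sequence $0\to A\to B\xrightarrow{g}C\to 0$ with $C$ indecomposable non-projective and $g$ right almost split need not be almost split: replacing it by $0\to A\oplus D\to B\oplus D\xrightarrow{(g,0)}C\to 0$ preserves both hypotheses but destroys the conclusion, since the left-hand term is no longer indecomposable. What is true, and what the paper uses via \cite[Theorem 2.14]{AR77}, is that a non-split exact sequence with right almost split deflation and \emph{indecomposable left term} is almost split. So you must also record that $\Theta(\mathrm{X})$ is indecomposable --- which is easy: $\mathrm{X}$ is indecomposable and, being the source of an almost split sequence in ${\rm H}^{\rm cw}(\CC)$, is not injective there, hence not in $\CV$, so it survives as a nonzero indecomposable object under the equivalence ${\rm H}(\CC)/\CV\simeq\mmod\CC$. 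With that line added, your proof is complete and coincides with the paper's.
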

\begin{proof}
The indecomposability of  $\mathrm{X}$ and $\mathrm{Y}$ imply 	that $\Theta(\mathrm{X})$ and $\Theta(\mathrm{Y})$ are indecomposable. By previous lemma, $\Theta(\delta)$ is an exact sequence that, moreover, does not split. Indeed, if it did, then $(-, f)\oplus (-, g)$ would be a minimal projective presentation for $\Theta(\mathrm{Z})$ which should comply with the one provided by the middle column of the above diagram. In view of the form of kernel elements of the functor $\Theta$ declared by Theorem \ref{Theorem-morphism-functor}, an application of Yoneda's lemma gives that, for some objects $A, B$ of $\CC$, there should exist an isomorphism
$$(Z_1\st{h}\rt Z_2)=(X_1\st{f}\rt X_2)\oplus (Y_1\st{g}\rt Y_2)\oplus (A\st{1}\rt A)\oplus(B\rt 0)$$	
of objects in ${\rm H}(\CC)$. As stated before, we may assume  $Z_i\simeq X_i\oplus Y_i$, $i=1, 2$ since $\delta$ belongs to $\CE^{\rm cw}$. However, $\CC$ being a  Krull-Schmidt category implies $A=B=0$ which makes $\delta$ split. This contradiction shows that $\Theta(\delta)$ does not split.

\vspace{.1 cm}

Now, as \cite[Theorem 2.4]{AR74} guarantees that $\mmod\mathcal{C}$ is abelian in this case, invoking \cite[Theorem 2.14]{AR77}, it suffices to show that $\Theta(\psi)$  is right almost split. For let $q:F \rt \Theta(\mathrm{Y})$ be a non-retraction in $\mmod\CC$ and take a projective presentation $(-, W_1)\st{(-, d)}\rt (-, W_2)\rt F\rt 0$ of $F$. Note that by definition, $\Theta(W_1\st{d}\rt W_2)=F.$ The morphism $q$ lifts to a morphism between the projective presentations  $(-, W_1)\st{(-, d)}\rt (-, W_2)\rt F\rt 0$ and $(-, Y_1)\st{(-,g)}\rt (-, Y_2)\rt \Theta(Y)\rt 0$. The lifted morphism induces, again by the Yoneda lemma, a map
		$$\left(\begin{smallmatrix}
	\sigma_1\\ \sigma_2
	\end{smallmatrix}\right):\left(\begin{smallmatrix}
	W_1\\ W_2
	\end{smallmatrix}\right)_d\rt \left(\begin{smallmatrix}Y_1\\  Y_2\end{smallmatrix}\right)_{g}$$
in $\rm{H}(\CC)$ such that $\Theta(\sigma_1, \sigma_2)=q$. Then $(\sigma_1, \sigma_2)$ is not a retraction since otherwise $q$ would be so. Now, $\delta$ being an almost split sequence in ${\rm H}^{\rm cw}(\CC)$,  $(\sigma_1, \sigma_2)$ factors over $\psi$ via some $(\eta_1, \eta_2):(W_1\st{d}\rt W_2)\rt (Z_1\st{h}\rt Z_2)$. Then applying $\Theta$, we see that the morphism $q$ factors over $\Theta(\psi)$ via $\Theta(\eta_1, \eta_2)$.
\end{proof}

\section{The case of wide subcategories}
Our objective in this section is to study the morphism categories raised by functorially finite wide subcategories of $\mmod\Lambda$. Some results from previous section will come to play. Afterwards, we shall switch to functor categories and obtain some results in this direction that extend others from the module category. So let firstly $\CX$ be a functorially finite idempotent-complete subcategory of $\mmod \La$. By \cite[Theorem 2.3]{AS81},  $\CX$ itself is a dualizing variety.

\vspace{.1 cm}

Following Proposition \ref{Prop.exsiyences-AR-comwise}, for  a dualizing $k$-variety $\CC$, there is an equivalence $\tau_{{\rm H}(\CC)}: \underline{{\rm H}^{\rm cw}}(\CC)\rt \overline{{\rm H}^{\rm cw}}(\CC)$ that, based on what we said in previous section, might be considered as an equivalence from ${\rm H}(\CC)/\CV'$ to ${\rm H}(\CC)/\CV$. Pictorially, there exists a composition of equivalences

	$$\xymatrix{
		    & {\rm H}(\CC)/\CV
	 \ar[r]^{\tau^{-1}_{{\rm H}(\CC)}} & {\rm H}(\CC)/\CV'\ar[dr]^{\overline{\Theta'}} && \\
		   \mmod \CC \ar[ur]^{(\overline{\Theta})^{-1}} \ar[rrr] &
		 & &\mmod \CC^{\rm op}\ar[r]^{D}& \mmod \CC  }
	$$
denoted throughout by $\Delta_{\CC}$, or simply by $\Delta$. Applied to the functorially finite subcategory $\CX$ of $\mmod\Lambda$, this yields an equivalence $\Delta_{\CX}:\mmod \CX \rt \mmod \CX$ which is restricted to the category ${\rm proj}\mbox{-}\CX$ of projective functors. Since $\CX$ is idempotent-complete, the Yoneda functor induces an equivalence ${\rm proj}\mbox{-}\CX\simeq \CX$. Summing up, one obtains an equivalence  $\sigma_{\CX}:\CX\rt \CX$ which agrees with the restricted equivalence $\Delta_{\CX}$ via the latter identification.
We notice that, going through the definitions, one figures out that for an object $X$ of $\CX$, there are $A,B\in\CX$ and an exact sequence $$ (B, -) \st{(f, -)}\rt (A, -)\rt D(-, \sigma_{\CX}(X)) \rt 0 $$ in $\mmod \CX$ such that $\tau_{{\rm H}(\CX)}^{-1}(0\rt X)=(A\st{f}\rt B)$.

Recall that a subcategory $\CM$  of $\mmod \La$ is said to be {\em closed under kernels}  (resp. cokernels, images) if for every morphism $X\st{f} \rt Y$ in $\CM$ also ${\rm Ker}(f) $ (resp. $\Coker (f), {\rm Im}(f))$ belongs to $\CM$. Further, $\CM$ is called a {\em wide} subcategory of $\mmod \La$ if it is closed under extensions, kernels and cokernels. It is clear that a wide subcategory is closed under images and is automatically idempotent-complete.

\vspace{.08 cm}

One of the objectives in the sequel is to show that for a functorially finite wide subcategory $\CX$ of $\mmod\Lambda$, $\sigma_\CX$ coincides with the identity functor on $\CX$. A key observation about functorially finite wide subcategories of $\mmod\Lambda$ is that such a subcategory $\CX$ is equivalent, as an exact category, to the category of finitely generated modules over some Artin algebra $\Gamma$. Indeed, according to the constructions given in \cite[Proposition 4.12]{En}, for an Ext-progenerator $P$ of $\CX$, the functor $\Hom_\Lambda(P, -):\mmod\Lambda\rt \mmod \Gamma$ restricts to an exact equivalence $\CX\simeq \mmod \Gamma$ between exact categories, where $\Gamma={\rm End}_\Lambda(P)$. In particular, it follows that functorially finite wide subcategories of $\mmod\Lambda$ admit enough projective and enough injective objects. We thank Haruhisa Enomoto for drawing our attention to \cite{En}.

\vspace{.1 cm}

Denote by $\sigma:=\sigma_{\mmod\Lambda}:\mmod \La\rt \mmod \La$ the auto-equivalence obtained above in the case where $\CX=\mmod\Lambda$. Note that since $\sigma$ is an equivalence, it clearly restricts to an equivalence  $\sigma':{\rm inj}\mbox{-}\La\rt {\rm inj}\mbox{-}\La$ on the subcategory of injective modules. We show that $\sigma'$, and consequently $\sigma$, are nothing but the identity functor on the corresponding categories. We refer e.g. to \cite{HE} for further explanation on how the Auslander-Reiten translation  $\tau_\CH$ and its inverse $\tau_{\CH}^{-1}$ work in this case and suffice to recall that the standard duality functor $D_{\mathcal{H}}$ might be computed in a local manner in terms of the standard duality $D$ of $\Lambda$, i.e., ${\rm D}_{\mathcal{H}}(X\st{f}\rt Y)=({\rm D}(Y)\st{{\rm D}(f)}\rt {\rm D}(X))$.

\begin{lemma}\label{identity-inj}
The restricted equivalence $\sigma'$ is isomorphic to the identity functor on ${\rm inj}\mbox{-}\La.$
\end{lemma}
\begin{proof}
Let $I$ be an injective $\Lambda$-module. There exists a minimal injective resolution in $\CH$
$$0 \rt \left(\begin{smallmatrix} 0\\ I\end{smallmatrix}\right)_0\rt  \left(\begin{smallmatrix} I\\ I\end{smallmatrix}\right)_1\rt \left(\begin{smallmatrix} I\\ 0\end{smallmatrix}\right)_0 \rt 0 $$
of the object $(0\rt I)$. Applying the duality $D_{\mathcal{H}}$ leads to the projective presentation in $\CH^{\rm op}$
$$0 \rt \left(\begin{smallmatrix} 0\\ DI\end{smallmatrix}\right)_0\rt  \left(\begin{smallmatrix} DI\\ DI\end{smallmatrix}\right)_1\rt \left(\begin{smallmatrix} DI\\ 0\end{smallmatrix}\right)_0 \rt 0 $$
\noindent  of the object $D_{\mathcal{H}}(0\rt I)$. Then, we compute the transpose and deduce that $\tau_{\mathcal{H}}^{-1}(0\rt I)\simeq(\nu^{-1}(I)\rt 0)$. As we pointed out earlier in this section, this results in an equivalence $(\nu^{-1}(I), -)\simeq D(-, \sigma(I))$ in  $\mmod (\mmod \La)^{\rm op}$.
Hence, evaluating on the regular module $\La$, yields a natural isomorphism  $\sigma(I)\simeq \nu \nu ^{-1}(I) \simeq I$.
\end{proof}

\begin{proposition}\label{sigma-identity}
For every functorially finite wide subcategory $\CX$ of $\mmod\Lambda$, the equivalence $\sigma_\CX$ is naturally isomorphic to the identity functor on $\CX$.
\end{proposition}
\begin{proof}
According to Lemma \ref{identity-inj},  the restricted equivalence $\sigma'$ is naturally isomorphic to the identity functor on the subcategory of injective $\Lambda$-modules. Using injective resolutions, it is then straightforward to see that the same holds for $\sigma$ itself, that is to say, $\sigma\simeq 1_{\mmod\Lambda}$. Now, for the functorially finite wide subcategory $\CX$ of $\mmod\Lambda$ there exists, according to previous remarks from \cite{En}, an exact equivalence $\CF: \CX\rt \mmod\Gamma$ for a suitable Artin algebra $\Gamma$. It is not difficult to verify that this is compatible with the induced functors $\sigma_{\CX}$ and $\sigma_{\mmod \Gamma}$. Indeed, the functor $\CF$ may naturally be extended to a functor
     $\phi_1:\mmod \CX\to \mmod (\mmod \Gamma)$ by sending any functor $G$ with a presentation 
    $\CX(-, X)\st{(-,f)}\rt \CX(-, Y)\rt G\rt 0$  to the cokernel of the map $$\Hom_\Gamma(-, \CF(f)): \Hom_\Gamma(-, \CF(X))\longrightarrow \Hom_\Gamma(-, \CF(Y)).$$ Likewise, one defines a functor $\phi_4:\mmod (\CX^{\rm op})\rt \mmod (\mmod \Gamma)^{\rm op}$. Moreover, $\CF$ induces a functor 
    $\phi_2:{\rm H}(\CX)/\CV\rt {\rm H}(\Gamma)/\CV $ which is defined by  $(X\st{f}\rt Y) \mapsto (\CF(X)\st{\CF(f)}\rt \CF(Y))$ and also another $\phi_3: {\rm H}(\CX)/\CV'\rt {\rm H}(\Gamma)/\CV'$ with the same rule. Finally, we let $\phi_5:\mmod\CX\rt\mmod(\mmod\Gamma)$ act by sending a functor $G\in\mmod\CX$ with an injective presentation $0\longrightarrow G\longrightarrow D \CX(X, -)\st{D\CX(f, -)}\longrightarrow D \CX(Y, -)$ to the kernel of the map 
$$D\Hom_\Gamma(\CF(f), -): D\Hom_\Gamma(\CF(X), -)\longrightarrow D\Hom_\Gamma(\CF(Y), -) .$$
 Overall, these form a commutative diagram 
    
    
\[\xymatrix{\mmod \CX \ar[r]^{(\bar{\Theta}_{\CX})^{-1}} \ar[d]^{\phi_1} &\frac{{\rm H}(\CX)}{\CV}\ar[r]^{\tau^{-1}_{\rm H(\CX)}}\ar[d]^{\phi_2}&\frac{{\rm H}(\CX)}{\CV'}\ar[r]^{\overline{\Theta'}_{\CX}}\ar[d]^{\phi_3} & \mmod \CX^{\rm op} \ar[r]^D \ar[d]^{\phi_4} & \mmod \CX \ar[d]^{\phi_5} &  \\
	\mmod (\mmod \Gamma)  \ar[r]^(+.6){(\bar{\Theta}_{\Gamma})^{-1}}  &\frac{{\rm H}(\Gamma)}{\CV} \ar[r]^{\tau^{-1}_{\rm H(\Gamma)}}& \frac{{\rm H}(\Gamma)}{\CV'} \ar[r]^(+.3){\overline{\Theta'}_{\Gamma}} & \mmod (\mmod \Gamma)^{\rm op}  \ar[r]^D  & \mmod (\mmod \Gamma)}\]
where the functors $\bar{\Theta}_{\CX}$ and  $\bar{\Theta}_{\Gamma}$ correspond, accoording to what we did in previous section, to the subcategories $\CX$ and $\mmod\Gamma$ respectively of $\mmod\Lambda$ and $\mmod\Gamma$. Since the top and the bottom row respectively define $\Delta_{\CX}$ and $\Delta_{\mmod \Gamma}$, restriction to projective objects gives $F \sigma_\CX\simeq \sigma_{\mmod \Gamma} F$. 
However, $\sigma_{\mmod \Gamma}\simeq 1_{\mmod \Gamma}$ by the above; hence  $\sigma_\CX\simeq 1_\CX$.
\end{proof}

The following couple of propositions are crucial to our next result in this section. We first need to recall the following definition.

\begin{definition} \label{Definition-minimal-proj-inj}
A minimal projective presentation of an object $C\in\CX$ with respect to $\CX$ is an exact sequence $P_1\st{f}\rt P_0\st{h}\rt C$ with $P_1$, $P_0 \in \CP(\CX)$, the class of projective objects of $\CX$, and is computed by taking minimal right $\CP(\CX)$-approximations consecutively. Minimal injective presentations with respect to $\CX$ are defined dually via $\CI(\CX)$, the class of injective objects of $\CX$.
\end{definition}

\begin{proposition}\label{Aslmost-C-0}
Assume $\CX$ is a functorially finite wide subcategory of $\mmod\Lambda$ and $\delta: 0 \rt A \st{f}\rt B \st{g}\rt C\rt 0$ is an almost split sequence in $\CX$.  Let also $ A \st{d}\rt I_0\st{q}\rt I_1$ be a minimal injective presentation  with respect to  $\CX$, where $b: \Coker(d) \rt I_1$ is a minimal left $\CI(\CX)$-approximation, $a: I_0 \rt\Coker(d) $ is the canonical quotient map and $q=ba$. Then the exact sequence

$$\xymatrix@1{ 0 \ar[r] & {\left(\begin{smallmatrix} I_0\\ I_1\end{smallmatrix}\right)}_{q}
		\ar[rr]^-{\left(\begin{smallmatrix} u\\ 1 \end{smallmatrix}\right)}
		& & {\left(\begin{smallmatrix} W\\ I_1\end{smallmatrix}\right)}_{br}\ar[rr]^-{\left(\begin{smallmatrix} v \\ 0\end{smallmatrix}\right)}& &
		{\left(\begin{smallmatrix}C \\ 0\end{smallmatrix}\right)_0}\ar[r]& 0  } \ \    $$

in ${\rm H}(\CX)$ raised by forming the push out diagram
$$\xymatrix{		 A \ar[d]^{d} \ar[r]^f & B \ar[d]^h
	\ar[r]^g& C \ar@{=}[d] && \\
	I_0\ar[d]^a \ar[r]^u & W
	\ar[d]^r \ar[r]^v & C  && \\
	\Coker(d) \ar@{=}[r] & \Coker(d)
	 & &&  }	$$
\noindent in the exact category $\CX$, is almost split.
\end{proposition}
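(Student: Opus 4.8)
The plan is to identify the given sequence in $\mathrm{H}(\CX)$ as $\tau_{\mathrm{H}(\CX)}^{-1}$ (or, what amounts to the same, a shifted/translated version) of the known almost split sequence in $\mathrm{H}^{\mathrm{cw}}(\CX)$ ending at an appropriate object, and then to invoke the compatibility between the canonical and the cw-exact structures established in Proposition \ref{p almost split seq} and Corollary \ref{cor dg slplit almost split}. Concretely, by Lemma \ref{LemmaBoundary}(1), applied to the almost split sequence $\delta$ in $\CX$, the almost split sequence in $\mathrm{H}(\CX)$ ending at $(0\rt C)$ has the shape $0\rt\bigl(\begin{smallmatrix}A\\ A\end{smallmatrix}\bigr)_1\rt\bigl(\begin{smallmatrix}A\\ B\end{smallmatrix}\bigr)_f\rt\bigl(\begin{smallmatrix}0\\ C\end{smallmatrix}\bigr)_0\rt 0$. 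The object $(0\rt C)$ is of type $(a)$; by Proposition \ref{projective-object} it is injective in $\mathrm{H}^{\mathrm{cw}}(\CX)$ but not projective, so it does appear as the end term of an almost split sequence both in the canonical and in the cw-exact structure. The first step is therefore to pin down, using the explicit description recorded just after the definition of $\sigma_\CX$, that $\tau_{\mathrm{H}(\CX)}^{-1}(0\rt C)$ is built from a minimal $\CI(\CX)$-presentation of $A=\tau_\CX(C)$; this is exactly where the data $A\st{d}\rt I_0\st{q}\rt I_1$, the factorization $q=ba$, and the push-out diagram enter.

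Next I would verify that the displayed three-term sequence is genuinely a conflation in $\mathrm{H}(\CX)$ — i.e.\ that the push-out $W$ lies in $\CX$ (using that $\CX$ is extension-closed and, being wide, closed under cokernels, so that $\Coker(d)$ and $W$ stay in $\CX$) and that the two maps $\bigl(\begin{smallmatrix}u\\ 1\end{smallmatrix}\bigr)$ and $\bigl(\begin{smallmatrix}v\\ 0\end{smallmatrix}\bigr)$ form a kernel–cokernel pair. Degree-wise, the top row $0\rt I_0\st{u}\rt W\st{v}\rt C\rt 0$ is a conflation in $\CX$ (it is the bottom edge of a push-out of the conflation $0\rt A\rt B\rt C\rt 0$ along $d$), and the second row $0\rt I_1\st{1}\rt I_1\rt 0\rt 0$ is split; a short diagram chase, or an appeal to the $3\times 3$/Snake lemma in $\CH$ restricted to $\mathrm{H}(\CX)$, gives exactness of the sequence of morphism objects. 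I would then observe that this conflation is in fact degree-wise split only in the second coordinate, not the first, so it is a conflation in the canonical structure $\mathrm{H}(\CX)$ but not in $\mathrm{H}^{\mathrm{cw}}(\CX)$ — consistent with Corollary \ref{cor dg slplit almost split}, since the end term $(C\rt 0)$ is of type $(c)$, i.e.\ falls outside the hypotheses that would force a split.

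The heart of the argument is to show the two maps are left, resp.\ right, almost split in $\mathrm{H}(\CX)$. For right almost splitness of $\bigl(\begin{smallmatrix}v\\ 0\end{smallmatrix}\bigr)$ at $(C\rt 0)$: any non-retraction $(\theta_1,\theta_2):(T_1\st{t}\rt T_2)\rt(C\rt 0)$ amounts to a single map $\theta_1:T_1\rt C$ that is not a split epimorphism; since $T_1\in\CX$ and $\delta$ is right almost split in $\CX$, $\theta_1$ factors through $g:B\rt C$, hence through $v:W\rt C$ (as $g$ factors through $v$ via $h$ by the push-out), and one lifts this factorization to $\mathrm{H}(\CX)$ using that the second component target is $0$. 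For left almost splitness of $\bigl(\begin{smallmatrix}u\\ 1\end{smallmatrix}\bigr)$ at $(I_0\st{q}\rt I_1)$, the key input is the minimality of the injective presentation: any map out of $(I_0\st{q}\rt I_1)$ that is not a section must be killed by precomposition with the radical part of $\End$, and the push-out construction together with the minimal-approximation property of $b$ and of $d$ forces the required factorization through $\bigl(\begin{smallmatrix}W\\ I_1\end{smallmatrix}\bigr)_{br}$. An alternative, and probably cleaner, route is to avoid checking both almost split conditions by hand: show directly that the displayed sequence is isomorphic (in $\mathrm{H}(\CX)$) to the translate of the Lemma \ref{LemmaBoundary}(1) sequence under $\tau_{\mathrm{H}(\CX)}^{-1}$ — using the explicit formula $\tau_{\mathrm{H}(\CX)}^{-1}(0\rt C)=(A'\st{f'}\rt B')$ recorded after $\sigma_\CX$, matched against the data of the minimal $\CI(\CX)$-presentation of $A$ — and then quote Proposition \ref{p almost split seq} together with the fact that an almost split sequence in $\mathrm{H}^{\mathrm{cw}}(\CX)$ whose end term is not of type $(a)$ or $(b)$ (here it is $(C\rt 0)$, type $(c)$) remains almost split in the canonical $\mathrm{H}(\CX)$ by Corollary \ref{cor dg slplit almost split} and the last paragraph of Section 3.

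I expect the main obstacle to be the bookkeeping around minimality: one must be careful that the minimal injective presentation $A\rt I_0\rt I_1$ with respect to $\CX$ (computed via consecutive minimal $\CI(\CX)$-approximations, per Definition \ref{Definition-minimal-proj-inj}) is precisely what makes $\bigl(\begin{smallmatrix}I_0\\ I_1\end{smallmatrix}\bigr)_q$ equal to $\tau_{\mathrm{H}(\CX)}^{-1}(0\rt C)$ on the nose rather than merely up to adding injective–projective summands; getting this identification exact — so that the resulting sequence is the \emph{minimal}, i.e.\ almost split, one and not just an arbitrary conflation ending at $(C\rt 0)$ — is the delicate point, and it is where the wide-subcategory hypothesis (closure under kernels and cokernels, hence good behaviour of $\Coker(d)$ and of the push-out) is genuinely used.
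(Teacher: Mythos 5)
Your argument for right almost splitness has a genuine gap at its central step. You reduce a non-retraction $(\theta_1,\theta_2):(T_1\st{t}\rt T_2)\rt (C\rt 0)$ to ``a single map $\theta_1:T_1\rt C$ that is not a split epimorphism'' and then factor through $g$. That reduction is false: a section of $(\theta_1,0)$ in ${\rm H}(\CX)$ is a map $s_1:C\rt T_1$ satisfying \emph{both} $\theta_1 s_1=1_C$ \emph{and} $t s_1=0$, so $(\theta_1,0)$ can fail to be a retraction even when $\theta_1$ is a split epimorphism (e.g.\ $\theta_1=1_C$ with $t\neq 0$). This is precisely the case on which the paper's proof spends most of its effort: after the easy case ($\phi$ not a retraction in $\CX$, which factors through $g$ and hence through $v$ via the pushout, as you say), it takes $\phi=1_C$ and splits into two sub-cases according to whether $p$ is a monomorphism or $\Ker(p)\neq 0$; it is there that the wideness of $\CX$ (so that $\Ker(p)$ and ${\rm Im}(p)$ lie in $\CX$) and the injectivity of $I_1$ and $\Ker(v)$ in $\CX$ are genuinely used to manufacture the second component $N\rt I_1$ of the factorization. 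Your proposal never produces that component, and the minimality bookkeeping you flag as the delicate point is not actually where the difficulty lies.

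Two further inaccuracies. First, you assert the displayed conflation is not degree-wise split in the first coordinate and therefore lives only in the canonical structure, ``consistent with'' Corollary \ref{cor dg slplit almost split} because type $(c)$ ``falls outside the hypotheses.'' In fact the opposite holds: $(C\rt 0)$ is not of type $(a)$ or $(b)$, so hypothesis $(1)$ of that corollary is \emph{satisfied} and forces a degree-wise split; and indeed $d$ is not a section (since $A=\tau_{\CX}(C)$ is non-injective in $\CX$), hence factors through the left almost split map $f$, so the pushout row $0\rt I_0\st{u}\rt W\st{v}\rt C\rt 0$ does split. Second, your alternative route via the formula for $\tau^{-1}_{{\rm H}(\CX)}(0\rt X)$ recorded after the definition of $\sigma_{\CX}$ is essentially circular: that passage gives only an abstract $(A\st{f}\rt B)$, and identifying such translates with minimal presentation data is exactly the content of Propositions \ref{Aslmost-C-0} and \ref{Almost-0-A} (cf.\ the proof of Corollary \ref{main 1}), so it cannot be quoted as an input here.
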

\begin{proof}
Using that $  A \st{d}\rt  I_0\st{s}\rt I_1$ is a minimal injective presentation and $A$ is indecomposable, we deduce that $(I_0\st{s}\rt I_1)$ is indecomposable. Hence it suffices to show that any non-retraction $(\phi, 0):(M\st{p}\rt  N)\rt(C\rt 0)$ in $\CX$ factors over $(v, 0)$. If $\phi$ is a non-retraction, then, since $\delta$ is an almost split sequence, it factors in $\CX$ over $g$ via, say, $w:M \rt B$.  Then it is easy that the morphism $(hw, 0):(M\st{p}\rt N)\rt (W\st{br}\rt I_1)$ factors the morphism $(\phi, 0)$ over $(v, 0)$.
So now take $\phi$ to be a retraction. Without  loss of generality, we may assume $M=C$ and $\phi=1_C$. Two cases might be distinguished:

\vspace{.1 cm}
\noindent Case 1: $p$ is a monomorphism. Since $v$ is a retraction in $\CX$,  there exists $s:C\rt W$ such that $vs=1.$ As $p:C \rt N$ is a monomorphism in $\CX$, there exists an  extension of $brs:C\rt I_1$
to a map $z:N\rt I_1$; that is to say, $zp=brs$. It follows then that $(s, z):(C\st{p}\rt N)\rt (W\st{br}\rt I_1)$ produces the desired factorization.

\vspace{.1 cm}
\noindent Case 2: Assume ${\rm Ker}(p) \neq 0 $. Note that since $\CX$ is a wide subcategory, ${\rm Ker}(p)$ lies in $\CX$. The fact that $(\phi, 0)$ is a non-retraction implies that ${\rm Ker}(p)$ is a proper submodule of $C$ and thus the canonical inclusion $i:{\rm Ker}(p) \rt C$ is a non-retraction in $\CX$. According to the hypothesis, we infer the existence of a map $y:{\rm Ker}(p) \rt B$ such that $gy=i$. Note further that, since $v$ is retraction, one may write $W={\rm Im}(s) \oplus {\rm Ker }(v)$ and, consequently, present $h$ as $h=[l_1, l_2]^t$,  where $l_1:B\rt {\rm Im }(s) $ and $l_2:B\rt {\rm Ker }(v).$ Using the injectivity of ${\rm Ker}(v)$ in $\CX$ yields an extension of $l_2 y:{\rm Ker }(p) \rt  {\rm Ker}(v)$ to $C$; that is, there exists $y':C \rt {\rm Ker}(v) $ such that $y'i=l_2 y$. Putting together, we get a diagram

\[\xymatrix{ & {\rm Ker}(p) \ar[d]^y \ar[r]^i& C \ar[d]^{[s~~y']^t} \ar[r] & {\rm Im }(p) \ \ar[r] &  0 \\  & B\ar[r]^h & W \ar[r]^r & \Coker(h)\ar[r] &  0}\]
\noindent with commutative left part. This induces a map $y'':{\rm Im}(p)\rt {\rm Cok}(h)$ completing the diagram.
Again, as $\CX$ is wide, the monomorphism ${\rm Im}(p) \st{i'}\rt N$ lies inside $\CX$ and, hence, the injectivity of $I_1$ in $\CX$ gives a map $z':N\rt I_1 $ with $z'i'=by''$. Finally, one verifies that the morphism $([s~~y']^t, z'):(C\st{p}\rt N) \rt (W\st{br}\rt I_1)$ gives the required factorization.
\end{proof}

As a dual statement, we record the following proposition.

\begin{proposition}\label{Almost-0-A}
Assume $\CX$ is a functorially finite wide subcategory of $\mmod\Lambda$ and $\delta: 0 \rt A \st{f}\rt B \st{g}\rt C\rt 0$ is an almost split sequence in $\CX$.  Let also $P_1\st{\ell}\rt P_0\st{h}\rt C$ be a minimal projective presentation  with respect to  $\CX$, where $k: P_1\rt \Ker(h)$ is a minimal right $\CP(\CX)$-approximation, $i: \Ker(h)\rt P_0 $ is the canonical inclusion and $\ell=ik$. Then the exact sequence

\[\xymatrix@1{ 0 \ar[r] & {\left(\begin{smallmatrix} 0\\ A\end{smallmatrix}\right)}
		\ar[rr]^-{\left(\begin{smallmatrix} 0\\u \end{smallmatrix}\right)}
		& & {\left(\begin{smallmatrix} P_1\\Z\end{smallmatrix}\right)}_{wk}\ar[rr]^-{\left(\begin{smallmatrix} 1 \\ v \end{smallmatrix}\right)}& &
		{\left(\begin{smallmatrix} P_1 \\ P_0\end{smallmatrix}\right)_{\ell}}\ar[r]& 0  } \]
in ${\rm H}(\CX)$  raised by forming the pull back diagram
$$\xymatrix{		 & \Ker(h)   \ar[d]^w
	\ar@{=}[r]& \Ker(h) \ar[d]^{i} && \\
	A\ar@{=}[d] \ar[r]^{u} & Z
	\ar[d]^r \ar[r]^{v} & P_0\ar[d]^h  && \\
	A \ar[r]^f & B\ar[r]^g
	 &C &&  }	$$
in the exact category $\CX$, is almost split.
\end{proposition}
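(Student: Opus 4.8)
The plan is to establish Proposition~\ref{Almost-0-A} by a direct dualization of the proof of Proposition~\ref{Aslmost-C-0}, carefully tracking what ``dual'' means here: we replace injective presentations by projective presentations, pushouts by pullbacks, and the boundary object $(0\rt C)$ by $(C\st{1}\rt C)$-flavoured reasoning — more precisely, the roles of the two boundary-type families $(a)$ and $(c)$ get swapped, and the statement now concerns the almost split sequence ending at $(P_1\st{\ell}\rt P_0)$. First I would record that since $P_1\st{\ell}\rt P_0\st{h}\rt C$ is a minimal projective presentation with respect to $\CX$ and $A$ is indecomposable, the object $(P_1\st{\ell}\rt P_0)$ is indecomposable; this is the dual of the indecomposability observation for $(I_0\st{s}\rt I_1)$ in the previous proof, and follows because minimality forces $\Ker(\ell)=\Ker(k)$ to be superfluous and ties the decomposition of $(P_1\st{\ell}\rt P_0)$ to that of $\Ker(h)$, hence to $A$ via the almost split sequence $\delta$.

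Next I would verify that the displayed sequence is indeed a conflation in $\rm{H}(\CX)$: the pullback diagram is a diagram in the exact category $\CX$ (using that $\CX$, being wide, is extension-closed and closed under kernels, so $Z$, $\Ker(h)$ etc.\ all lie in $\CX$), and forming the pullback of $g$ along $h$ produces the exact rows and columns that assemble into the claimed short exact sequence in $\rm{H}(\CX)$. I would note that the leftmost column reads $0\rt(0\rt A)\rt(P_1\st{wk}\rt Z)\rt(P_1\st{\ell}\rt P_0)\rt 0$ after identifying the relevant maps; here one uses $vwk=\ell$ coming from $v w=i$ (top square of the pullback) and $\ell=ik$.

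The substance of the proof is showing that $(\begin{smallmatrix}1\\v\end{smallmatrix}):(P_1\st{wk}\rt Z)\rt(P_1\st{\ell}\rt P_0)$ is right almost split in $\rm{H}(\CX)$; equivalently, since $(P_1\st{\ell}\rt P_0)$ is indecomposable non-projective, that every non-retraction $(\phi_1,\phi_2):(M\st{p}\rt N)\rt(P_1\st{\ell}\rt P_0)$ factors through it. Dualizing the case analysis of Proposition~\ref{Aslmost-C-0}: if $\phi_2:N\rt P_0$ is a non-retraction then it factors through $h$ (using that $P_0$ is $\CP(\CX)$-projective is not quite it — rather one pulls back along the almost split sequence $\delta$, or uses that a non-retraction into $P_0$ composed with $h$ is a non-retraction into $C$, hence factors through $g$), and one lifts this factorization through the pullback $Z$ to obtain the desired map $(M\st{p}\rt N)\rt(P_1\st{wk}\rt Z)$; the first component is built using projectivity of $P_1$ in $\CX$. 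If $\phi_2$ is a retraction, write $N\cong \mathrm{Im}(\text{section})\oplus N'$ and split into the subcase where $p$ is an epimorphism and the subcase $\Coker(p)\neq 0$, in each case manufacturing the factorization by combining a section of $\phi_2$ with a map produced from the projectivity of $P_1$ in $\CX$ and from the almost split property of $\delta$ applied to the non-retraction $\Coker(p)\rt$ (an appropriate target); here wideness of $\CX$ is used again to keep $\mathrm{Coim}$, $\mathrm{Im}$, $\Coker$ inside $\CX$. I expect the main obstacle to be exactly this last subcase: getting the two components $(\eta_1,\eta_2)$ of the factorizing morphism to be simultaneously compatible with the square $wk$ and $\ell$, which in the dual proof required the somewhat delicate diagram chase producing $y''$ and its extension $z'$. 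Finally, having shown $(\begin{smallmatrix}1\\v\end{smallmatrix})$ is right almost split and $(P_1\st{\ell}\rt P_0)$ indecomposable non-projective with local endomorphism ring, I would conclude via the standard fact (e.g.\ \cite[Theorem 2.14]{AR77}) that the conflation is almost split, and remark that the left-hand map is automatically left almost split since the sequence does not split and the end term is indecomposable.
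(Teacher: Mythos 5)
Your overall strategy --- dualize the proof of Proposition \ref{Aslmost-C-0} --- is exactly what the paper intends, since Proposition \ref{Almost-0-A} is recorded there as ``a dual statement'' with no separate proof. The trouble is that you dualize the wrong map. The paper's proof of Proposition \ref{Aslmost-C-0} verifies that the deflation $(v,0)$ onto the type-$(c)$ end term $(C\rt 0)$ is \emph{right} almost split; its honest dual verifies that the inflation $(0,u):(0\rt A)\rt (P_1\st{wk}\rt Z)$ is \emph{left} almost split. That version goes through cleanly: a morphism out of $(0\rt A)$ is just a map $\psi:A\rt N$; if $\psi$ is not a section it factors through the left almost split map $f$ as $\psi=tf$, and then $(0,tr)$ is the required factorization (using $rw=0$ and $ru=f$, the square with $wk$ commutes for free); the remaining case, $\psi$ a section in $\CX$ but $(0,\psi)$ not a section in ${\rm H}(\CX)$, is handled by the dual of the paper's Case 1/Case 2 analysis (split on $p$ epi versus $\Coker(p)\neq 0$), using Ext-projectivity of $P_0,P_1$ in $\CX$ and wideness.

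By instead proposing to show that $(1,v)$ is right almost split you set yourself a task that is not a mechanical dualization, and the obstacle you flag at the end is a genuine gap rather than a technicality. Concretely: for a non-retraction $(\phi_1,\phi_2):(M\st{p}\rt N)\rt(P_1\st{\ell}\rt P_0)$, any factorization through $(1,v)$ forces the first component to equal $\phi_1$, and the second component must be $\eta_2=(\beta,\phi_2):N\rt Z$ (with $Z$ the pullback of $g$ and $h$) satisfying both $g\beta=h\phi_2$ \emph{and} $\beta p=0$; the latter is what makes the square with $wk$ commute. Finding some $\beta$ with $g\beta=h\phi_2$ is easy --- though not for the reason you give: the point is that $h$ itself is not a retraction because $C\notin\CP(\CX)$, hence $h\phi_2$ is never a retraction and always factors through the right almost split map $g$. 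But arranging $\beta p=0$ simultaneously amounts to showing that the map $\gamma:M\rt A$ defined by $\beta p=f\gamma$ factors through $p$, and your sketch gives no argument for this; it is precisely here that the non-retraction hypothesis on $(\phi_1,\phi_2)$ would have to be used, and nothing in your outline does so. Two smaller slips: the indecomposability of $(P_1\st{\ell}\rt P_0)$ follows from minimality of the presentation together with indecomposability of $C$, not via $\Ker(h)$ and $A$ (note $\Ker(h)=\Omega_{\CX}(C)$ is not $A=\tau_{\CX}(C)$); and your case division on whether $\phi_2$ is a retraction imports the hypotheses relevant to the left-hand verification into the right-hand one. The fix is simply to carry out the literal dual and prove that $(0,u)$ is left almost split.
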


Any functorially finite wide subcategory $\CX$ of $\mmod\Lambda$ admits almost split sequences by \cite[Theorem 2.4]{AS81}. Hence, following Lemma \ref{ARS duality}, we let $\tau_{\CX}$ denote the Auslander-Reiten translation over $\CX$. The following theorem is another main result in this section.

\begin{theorem}\label{main 1}
Let $\CX$ be a functorially finite wide subcategory of $\mmod \La$  and assume that  $X \in \CX$ is an indecomposable module not belonging to $\CP(\CX)$, and  that $P\st{f}\rt Q\rt X$ is a minimal projective presentation with  respect to $\CX$. Then there is an exact sequence

$$ 0 \rt (-, \tau_{\CX}(X))\rt D(P, -)\rt D(Q, -)\rt D(X, -)\rt 0$$
in $\mmod \CX$
\end{theorem}
\begin{proof}
Proposition \ref{Almost-0-A} yields  that the inverse Auslander-Reiten translation $\tau_{{\rm H}(\CX)}^{-1}(0\rt \tau_{\CX}(X))$ of $(0 \rt \tau_{\CX}(X))$ in ${\rm H}(\CX)$ coincides with $(P\st{f} \rt Q)$. Taking into account our observations on the functor $\sigma_{\CX}$ at the beginning of this section, Proposition \ref{sigma-identity} gives the result.
\end{proof}

In the rest of this section, we will provide some applications of the aforementioned results.

\begin{corollary}\label{corollary exact seq}
Let $M$ be an indecomposable $\Lambda$-module with a minimal projective presentation $P\st{f}\rt Q\rt M\rt 0$. Then there is an exact sequence
$$ 0 \rt (-, \tau(M))\rt D(P, -)\rt D(Q, -)\rt D(M, -) \rt 0$$
in $\mmod (\mmod \La)$.
\end{corollary}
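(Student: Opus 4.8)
The plan is to derive this from the previous corollary by specializing the exact sequence of Corollary \ref{main 1} to the case $\CX = \mmod\La$, and then applying the duality $D$ on $\mmod(\mmod\La)$ to turn it into a sequence in $\mmod(\mmod\La)$ rather than in $\mmod(\mmod\La)^{\op}$. First I would note that $\mmod\La$ is a functorially finite (indeed, the whole) wide subcategory of itself, so Corollary \ref{main 1} applies with $\tau_{\CX} = \tau$ and, by Theorem \ref{sigma-identity}, with $\sigma_{\CX} = \sigma \simeq \id$. This yields at once an exact sequence
\[
0 \rt (-, \tau(M)) \rt D(P, -) \rt D(Q, -) \rt D(M, -)
\]
in $\mmod(\mmod\La)^{\op}$, where $P \st{f}\rt Q \rt M \rt 0$ is the minimal projective presentation of $M$ in $\mmod\La$; here I am reading $(-, \tau(M))$ as the representable functor on $\mmod\La$ and $D(P,-), D(Q,-), D(M,-)$ as the $D$-duals of corepresentable functors, exactly as in Corollary \ref{main 1}.

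The remaining point is bookkeeping about which functor category this sequence lives in. The objects $D(P,-)$, $D(Q,-)$, $D(M,-)$ are injective objects of $\mmod(\mmod\La)$ (being $D$-duals of the projectives $(P,-),(Q,-),(M,-)$ in $\mmod(\mmod\La)^{\op}$), and $(-, \tau(M))$ is a projective object of $\mmod(\mmod\La)$. So the displayed sequence is naturally a sequence in $\mmod(\mmod\La)$: the point is simply that $D$ carries $\mmod(\mmod\La)^{\op}$ to $\mmod(\mmod\La)$, and applying $D$ to the four-term exact sequence above — reversing arrows — produces
\[
D\big(D(M,-)\big) \rt D\big(D(Q,-)\big) \rt D\big(D(P,-)\big) \rt D\big((-,\tau(M))\big),
\]
which by the natural isomorphism $D D \simeq \id$ and the standard identifications of $D$ on representable/corepresentable functors reads as
\[
(-, M') \rt (-, Q') \rt (-, P') \rt D(-, \tau(M))
\]
— but this is not quite what we want either. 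The cleaner route is: Corollary \ref{main 1} is stated for $\sigma_{\CX}\tau_{\CX}(X)$ appearing on the left in $\mmod\CX$; one simply re-reads its proof with $\CX = \mmod\La$, using that the duality $D_{\CH}$ on $\CH$ is computed locally (as recalled in the text, $D_{\CH}(X\st{f}\rt Y) = (DY \st{Df}\rt DX)$), so that the equivalence $\overline{\Theta}$ and $\overline{\Theta'}$ together with $\tau_{\CH}^{-1}$ land the four-term sequence directly in $\mmod(\mmod\La)$. Then Theorem \ref{sigma-identity} kills the occurrence of $\sigma$, producing $(-,\tau(M))$ on the left.

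Finally I would check exactness on the right: the sequence of Corollary \ref{main 1} is only asserted to be exact as a four-term complex with the rightmost map possibly not surjective, but here the cokernel of $D(Q,-) \rt D(M,-)$ is precisely $\Theta'$ applied to the cokernel term, and since $P \rt Q \rt M \rt 0$ is exact the induced sequence $D(P,-)\rt D(Q,-)\rt D(M,-)\rt 0$ is exact (applying the exact duality $D$ to the exact $(M,-)\rt(Q,-)\rt(P,-)$, noting $(-,?)$ on $\mmod\La$ is exact on the relevant representables — more carefully, $(M,-)\st{(f,-)}\rt (Q,-)\rt (P,-)$ need not be right exact, but $D$ of it is left exact, giving exactness at $D(P,-)$ is not needed; what we need is right-exactness at $D(M,-)$, which follows because $D(M,-) = DD$ of nothing — rather, $D(M,-)$ is the cokernel in the defining presentation of $\Theta'(P\st{f}\rt Q)$). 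So the extra "$\rt 0$" is immediate from the construction of $\Theta'$. The main obstacle I anticipate is purely notational: keeping straight the three dualities in play ($D$ on $\mmod\La$, $D_{\CH}$ on $\CH$, and $D$ on $\mmod(\mmod\La)$) and verifying that they intertwine correctly under $\Theta, \Theta'$ — but all of this has been set up in Section 4 and in the paragraph preceding Lemma \ref{identitity-inj}, so no genuinely new idea is required.
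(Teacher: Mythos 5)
Your proposal follows the paper's own proof exactly: specialize Corollary \ref{main 1} to $\CX=\mmod\Lambda$ and invoke Theorem \ref{sigma-identity} to replace $\sigma_{\CX}\tau_{\CX}(M)$ by $\tau(M)$; your closing remark justifying the trailing surjectivity of $D(Q,-)\rt D(M,-)$ (which Corollary \ref{main 1} does not assert) is a worthwhile addition, while the digression about $\mmod(\mmod\Lambda)^{\rm op}$ is unnecessary since Corollary \ref{main 1} already places the sequence in $\mmod\CX$. The only omission is the case where $M$ is projective, which the statement allows and which the paper dispatches as trivial (there $\tau(M)=0$, $P=0$ and $Q=M$).
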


\begin{proof} If $M$ is projective, then such a sequence exists trivially. Otherwise, applying Theorem \ref{main 1} for $\CX=\mmod\Lambda$ settles the statement.
\end{proof}

\begin{remark}
For every non-projective indecomposable $\Lambda$-module $M$, we know that there exists an exact sequence
$$0 \rt \tau(M) \rt \nu(P)\rt \nu(Q)\rt \nu(M)\rt 0$$
where $P\rt Q\rt M\rt 0$ is the minimal projective presentation of $M$. Note that this comes up also by computing the exact sequence of functors in Corollary \ref{corollary exact seq} over $\Lambda$. This shows that in some sense, previous corollary goes parallel to, and generalizes some well-known facts about the Auslander-Reiten theory of $\Lambda$.
\end{remark}

\begin{corollary}
Let $F$ be a functor in $\mmod (\mmod \La)$ with a minimal projective presentation $(-, X)\st{(-, f)}\rt (-, Y)\rt F\rt 0.$ Then there is an exact sequence
$$(Y', -)\st{(g, -)}\rt (X', -)\rt DF\rt 0$$
in $\mmod (\mmod \La)^{\rm op}$ where $(X'\st{g}\rt Y')$ is the inverse Auslander-Reiten translation of $(X\st{f}\rt Y)$ in $\mathcal{H}$.
\end{corollary}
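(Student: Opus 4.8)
The plan is to unwind the definitions of the functor $\Theta$ and the duality $\overline{\Theta'}$, combined with Theorem~\ref{sigma-identity} which says $\sigma = \sigma_{\mmod\La}$ is (isomorphic to) the identity. First I would start with the object $(X\st{f}\rt Y)$ of $\CH$ and recall from Construction~\ref{Construction-Morphism-Functor} that $\Theta(X\st{f}\rt Y) = F$ precisely because the minimal projective presentation $(-,X)\st{(-,f)}\rt (-,Y)\rt F\rt 0$ exhibits $F$ as a cokernel. Since the given presentation is minimal, the map $(X\st{f}\rt Y)$ is an object of $\CH$ with no summands of type $(b)$ or $(c)$ in an inessential way; more precisely, by the analysis of kernel objects of $\Theta$ in Theorem~\ref{Theorem-morphism-functor}, $(X\st{f}\rt Y)$ is, up to the relevant summands, the canonical lift of $F$ under $\overline{\Theta}$.

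Next I would apply the inverse Auslander–Reiten translation $\tau_{\CH}^{-1}$ to $(X\st{f}\rt Y)$ to obtain $(X'\st{g}\rt Y')$, as in the statement. The key is the chain of equivalences defining $\Delta = \Delta_{\mmod\La}$: the composite $\mmod(\mmod\La) \st{(\overline{\Theta})^{-1}}\rt \CH/\CV \st{\tau_{\CH}^{-1}}\rt \CH/\CV' \st{\overline{\Theta'}}\rt \mmod(\mmod\La)^{\op} \st{D}\rt \mmod(\mmod\La)$ is by definition $\Delta$, and by Theorem~\ref{sigma-identity} its restriction to $\proj\mbox{-}(\mmod\La)$ — equivalently, under Yoneda, $\sigma$ on $\mmod\La$ — is the identity. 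Tracing $F$ through this composite: $(\overline{\Theta})^{-1}(F)$ is the class of $(X\st{f}\rt Y)$; applying $\tau_{\CH}^{-1}$ gives the class of $(X'\st{g}\rt Y')$; and then $\overline{\Theta'}$ of that class is, by the contravariant Construction recorded after Theorem~\ref{Theorem-morphism-functor}, the functor $\Coker\big(\CC(Y',-)\st{\CC(g,-)}\rt \CC(X',-)\big)$, i.e.\ the cokernel of $(g,-)\colon (Y',-)\rt (X',-)$. Concretely this yields the exact sequence $(Y',-)\st{(g,-)}\rt (X',-)\rt \overline{\Theta'}(X'\st{g}\rt Y')\rt 0$ in $\mmod(\mmod\La)^{\op}$.

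Finally I would identify $\overline{\Theta'}(X'\st{g}\rt Y')$ with $DF$. Since $D\circ \overline{\Theta'}\circ \tau_{\CH}^{-1}\circ (\overline{\Theta})^{-1} = \Delta \cong \mathrm{id}$ on the subcategory of projective (representable) functors, and $F$ here need not be projective, one must be slightly careful: the cleanest route is to observe that the observations preceding Definition~\ref{Definition-minimal-proj-inj} give, for $(X\st{f}\rt Y)$ with $\tau_{\CH}^{-1}(0\rt \cdot)$-type data, the identity $\overline{\Theta'}\,\tau_{\CH}^{-1}(\overline{\Theta})^{-1}(F) \cong D\Delta^{-1}(F)$? — rather, the direct way is simply to apply the duality $D$ on $\mmod(\mmod\La)$ to the defining exact sequence. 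Since $D$ is exact (it is the $k$-dual on a dualizing variety, by \cite[Proposition 2.6]{AR74}), applying $D$ to $(-,X)\st{(-,f)}\rt (-,Y)\rt F\rt 0$ gives $0\rt DF\rt D(-,Y)\rt D(-,X)$, and one matches this against the known form of $D$ of representable functors together with what $\tau_{\CH}^{-1}$ does; but the slickest argument uses that $\overline{\Theta'}$ sends the class of $(X'\st{g}\rt Y')$ to a functor whose minimal projective presentation is visibly $(Y',-)\st{(g,-)}\rt (X',-)\rt \cdot \rt 0$, and then invokes the already-established natural isomorphism (from the proof of Lemma~\ref{identitity-inj} and Theorem~\ref{sigma-identity}) identifying $(X',-)\st{}\rt \cdot$ with the dual of $(-,Y)\rt F$, forcing $\overline{\Theta'}(X'\st{g}\rt Y') \cong DF$.

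\textbf{Main obstacle.} The delicate point is the last identification: $F$ is an \emph{arbitrary} coherent functor, not a representable one, so one cannot directly quote $\sigma \cong \mathrm{id}$ (which was stated on $\mmod\La \simeq \proj\mbox{-}(\mmod\La)$). The right fix is to note that $\Delta_{\mmod\La}$ is an honest auto-equivalence of the abelian category $\mmod(\mmod\La)$ which restricts to the identity on projectives; since every object has a projective presentation and $\Delta$ is exact (being a composite of (anti)equivalences of exact categories), $\Delta \cong \mathrm{id}$ on all of $\mmod(\mmod\La)$ by a standard resolution argument — exactly the argument used in the proof of Theorem~\ref{sigma-identity}. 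Granting this, $D\circ\overline{\Theta'}\circ\tau_{\CH}^{-1}\circ(\overline{\Theta})^{-1}(F)\cong F$, hence $\overline{\Theta'}\,\tau_{\CH}^{-1}(\overline{\Theta})^{-1}(F)\cong DF$, and reading off a projective presentation of the left-hand side from the contravariant Construction yields precisely $(Y',-)\st{(g,-)}\rt (X',-)\rt DF\rt 0$.
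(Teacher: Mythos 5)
Your argument is correct and matches the paper's own proof: both hinge on upgrading Theorem \ref{sigma-identity} (namely $\Delta_{\mmod\La}\cong\mathrm{id}$ on projective functors) to a natural isomorphism $\Delta_{\mmod\La}\cong\mathrm{id}$ on all of $\mmod(\mmod\La)$ via projective presentations, so that $D\cong\overline{\Theta'}\circ\tau_{\CH}^{-1}\circ(\overline{\Theta})^{-1}$, and then reading off the presentation of $\overline{\Theta'}(X'\st{g}\rt Y')$ from the contravariant construction. The only cosmetic difference is that your write-up wanders through a discarded alternative before settling on this route, which is exactly the one the paper takes.
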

\begin{proof}
Again we specify our constructions to the dualizing variety $\CC=\mmod \La$. By virtue of Proposition \ref{sigma-identity}, the functor $\Delta:=\Delta_{\mmod \La}$ acts identically on projective functors. Using projective presentations, it follows that $\Delta$ is isomorphic to the identity functor on the whole $\mmod (\mmod \La)$. Definition of $\Delta$ then implies that the duality functor $D:\mmod(\mmod\La)\rt \mmod (\mmod \La)^{\rm op}$ is isomorphic to $ \overline{\Theta'}\circ \tau^{-1}_{\CH}\circ (\overline{\Theta})^{-1}$. This proves the claim by following the definitions of the functors involved.
\end{proof}

As a concluding point, let us exploit Corollary \ref{corollary exact seq} to observe a connection between the inverse Auslander-Reiten translation of an indecomposable non-projective $\Lambda$-module $M$ with the second syzygies of injective functors. For, replace $M$ in Corollary \ref{corollary exact seq} by $\tau^{-1}M$ to get the exact sequence
$$ 0 \rt (-, M)\rt D(P, -)\rt D(Q, -)\rt D(\tau^{-1}(M), -) \rt 0$$
in $\mmod (\mmod \La)^{\rm op}$ in which $P\rt Q \rt \tau^{-1}(M) \rt 0$ is a minimal projective presentation. Applying  the duality $D:\mmod (\mmod \La)^{\rm op}\rt \mmod (\mmod \La)$ gives the exact sequence
$$ 0 \rt ( \tau^{-1}(M), -)\rt (Q, -)\rt (P, -)\rt D(-, M) \rt 0.$$
This shows that the functor $( \tau^{-1}(M), -)$ might be interpreted as a second syzygy of the injective functor $D(-, M)$.

\section{simple modules over (stable) Auslander algebra}

Assume $\Lambda$ is of finite representation type and let $M$ be a basic representation generator of $\mmod \La$; that is, $M$ is the direct sum of all pairwise non-isomorphic indecomposable finitely generated $\Lambda$-modules. The endomorphism algebra $A(\Lambda)=\rm{End}_{\La}(M)$, simply denoted by ${\rm A}$ throughout the section, is called the {\it Auslander algebra} of $\La$.  Moreover, the {\it stable Auslander algebra} of $\La$ is by definition $\underline{\rm A}=\rm{End}_{\La}(M)/\CP$, where $\CP$ is the ideal in $\rm{End}_{\La}(M)$ consisting of those endomorphisms factoring through a projective module.
In this case,  we can identify $\mmod {\rm A}$  with $\mmod (\mmod \La) $ via the equivalence induced by the evaluation functor $e_M:\mmod(\mmod \La)\rt \mmod {\rm A}, \ F\mapsto F(M)$. It is also easy to see that $e_M$ induces an equivalence  between $\mmod (\underline{\rm{mod}}\mbox{-}\La)$ and  $\mmod \underline{\rm A}$.

\vspace{.1 cm}

It is known \cite{A76} that indecomposable modules in $\mmod\La$ correspond bijectively to simple functors in $\mmod (\mmod\Lambda)$ by sending  an indecomposable module $M$  to the simple functor $S_M:=(-, M)/\rm{rad}(-, M)$.  Further, for any indecomposable non-projective module $M$, there is a minimal projective resolution
$$0 \rt (-, N)\st{(-, f)}\rt (-, K)\st{(-, g)}\rt (-, M)\rt S_M\rt 0$$
of $S_M$ such that $0 \rt N\st{f}\rt K\st{g}\rt M \rt 0$ is an almost split sequence in $\mmod\Lambda$ (\cite[\S 2]{A76}). Combined to the above observations on the Auslander algebra ${\rm A}$, one may identify simple ${\rm A}$-modules (resp. simple $\underline{\rm A}$-modules) and indecomposable (resp. indecomposable non-projective) $\Lambda$-modules.

\vspace{.1 cm}

Specializing \cite[Construction 3.1]{H} to the  module category $\mmod \La$ gives a functor $\Psi:\CS\rt \mmod (\underline{\rm{mod}}\mbox{-}\La) $, $\CS$ being the monomorphism category of $\Lambda$. This is  defined by sending $(X\st{f}\rt Y)$ in $\CS$ to the functor $F \in \mmod (\underline{\rm{mod}}\mbox{-}\La) $ lying in the exact sequence
$$0 \rt (-, X)\st{(-, f)}\rt (-, Y) \rt (-, \Coker(f)) \rt F\rt 0$$
in  $\mmod (\mmod \La)$. As the following result says, $\Psi$ behaves well with respect to almost split sequences.

\begin{lemma}(\cite[Proposition 5.7]{H})\label{almostspliMono}
Let $0 \rt {\rm U} \rt {\rm V} \rt {\rm W} \rt 0$ be an almost split sequence in $\CS$. Assume ${\rm W}$ is neither of types $(a)$ or $(b)$, nor  of the form $(\Omega(X) \rt P)$, where $X$ is a non-projective indecomposable $\Lambda$-module with projective cover $P$. Then $$0 \rt \Psi({\rm U})\rt \Psi(\rm V) \rt \Psi({\rm W}) \rt 0$$ is an almost split sequence in $\mmod (\underline{\rm{mod}}\mbox{-}\La)$.
\end{lemma}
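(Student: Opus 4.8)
The statement is essentially a transportation result: the functor $\Psi$ carries almost split sequences in $\CS$ to almost split sequences in $\mmod(\underline{\rm{mod}}\mbox{-}\La)$, under hypotheses on $\rm W$ that exclude the ``boundary'' and ``degenerate'' cases. My plan is to reduce this to the abstract factorization criterion for almost split sequences together with an analysis of $\Psi$ on morphisms, mirroring the strategy already used for $\Theta$ in Theorem \ref{almostpreserving}. First I would fix an almost split sequence $0 \rt {\rm U} \st{u}\rt {\rm V}\st{v}\rt {\rm W}\rt 0$ in $\CS$ satisfying the hypothesis and record the defining four-term exact sequences $0\rt(-,X)\rt(-,Y)\rt(-,\Coker f)\rt\Psi(\rm W)\rt 0$ for each of ${\rm U},{\rm V},{\rm W}$. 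Applying $\Psi$ degreewise and chasing the resulting $4\times 3$ diagram (a Snake-Lemma style argument, as in the unnumbered lemma preceding Theorem \ref{almostpreserving}) should produce an exact sequence $0\rt\Psi({\rm U})\rt\Psi({\rm V})\rt\Psi({\rm W})\rt 0$; here one must check that no extra kernel/cokernel term survives, which is exactly where the hypothesis that ${\rm W}$ is not of type $(a)$, $(b)$, or $(\Omega(X)\rt P)$ enters — these are precisely the objects on which $\Psi$ degenerates (its value is zero or the sequence fails to stay short exact). I would isolate this exactness-and-nondegeneracy check as the technical heart of the argument.

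Next, granting that $\Psi(\delta)$ is a short exact sequence that does not split, I would invoke the standard criterion (the $k$-linear, Ext-finite, Krull--Schmidt setting of Lemma \ref{ARS duality}, or directly \cite[Theorem 2.14]{AR77} as in Theorem \ref{almostpreserving}): it suffices to show $\Psi({\rm W})$ is indecomposable and that $\Psi(v)$ is right almost split. Indecomposability of $\Psi({\rm W})$ follows because $\Psi$ restricted to the relevant objects is faithful-enough on endomorphism rings — more precisely, the defining exact sequence exhibits $\Psi({\rm W})$ as a cokernel term of a minimal-type presentation, so $\End(\Psi({\rm W}))$ is a quotient of $\End({\rm W})$, hence local. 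For the right almost split property I would take a non-retraction $q\colon F\rt\Psi({\rm W})$ in $\mmod(\underline{\rm{mod}}\mbox{-}\La)$, lift it along a projective presentation of $F$ to a morphism of the defining four-term sequences, extract (via Yoneda) a morphism $\widetilde q\colon {\rm W}'\rt{\rm W}$ in $\CS$ with $\Psi(\widetilde q)=q$, argue $\widetilde q$ is not a retraction (else $q$ would be), factor it through $v$ using that $\delta$ is almost split in $\CS$, and apply $\Psi$ to conclude $q$ factors through $\Psi(v)$. This is the same template as the last paragraph of the proof of Theorem \ref{almostpreserving}, so the main novelty is purely in handling $\Psi$'s longer (four-term) presentations.

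The step I expect to be the genuine obstacle is the faithful lifting of morphisms: unlike $\Theta$, whose kernel objects are completely classified in Theorem \ref{Theorem-morphism-functor}, the functor $\Psi\colon\CS\rt\mmod(\underline{\rm{mod}}\mbox{-}\La)$ passes through the stable module category and its kernel is more delicate — one must show that a morphism $q$ into $\Psi({\rm W})$ really does lift to an honest morphism in $\CS$ (not merely up to the ideal of maps factoring through projectives), and that the hypotheses on ${\rm W}$ guarantee the lifted morphism is non-split-worthy. I would handle this by carefully tracking which maps in $\mmod(\mmod\La)$ die under the localization to $\mmod(\underline{\rm{mod}}\mbox{-}\La)$ and verifying they correspond to the excluded object types; concretely, a retraction onto $\Psi({\rm W})$ that does not lift to a retraction in $\CS$ would force ${\rm W}$ to have a summand of type $(a)$, $(b)$, or $(\Omega(X)\rt P)$, contradicting the hypothesis. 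Since \cite[Proposition 5.7]{H} is cited as the source, I would in practice defer the detailed bookkeeping to that reference and present here only the skeleton above, remarking that the argument is formally parallel to Theorem \ref{almostpreserving} with the monomorphism-category subtleties absorbed into the stated exclusions on ${\rm W}$.
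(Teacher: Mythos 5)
The paper gives no proof of this lemma: it is imported verbatim from \cite[Proposition 5.7]{H}, so there is no internal argument to compare against. Your skeleton --- push the almost split sequence through $\Psi$, establish short exactness and non-splitness of $\Psi(\delta)$, get indecomposability of the end terms from fullness of $\Psi$, and prove the right-almost-split property by lifting a non-retraction back to $\CS$ and factoring through $v$ --- is the natural adaptation of the proof of Theorem \ref{almostpreserving} and is the correct strategy; deferring the bookkeeping to \cite{H} is also exactly what the paper does.

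That said, three spots in your sketch need repair before it would stand on its own. First, your account of why $(\Omega(X)\rt P)$ is excluded is inaccurate: on such an object $\Psi$ does not vanish, it returns the \emph{projective} functor $\underline{\Hom}_{\Lambda}(-,X)$ of $\mmod(\underline{\rm mod}\mbox{-}\La)$ (the image of $(-,P)\rt(-,X)$ is precisely the ideal of maps factoring through a projective), and the obstruction is that an epimorphism onto a projective object splits, so no almost split sequence can terminate there. Second, the short exactness of $\Psi(\delta)$ is not a formal Snake-Lemma chase: unlike the degreewise split conflations of ${\rm H}^{\rm cw}(\CC)$ used for $\Theta$, an almost split sequence in $\CS$ need not be degreewise split (for instance, the sequence in the proof of Theorem \ref{TMS6} whose first-row component is the non-split almost split sequence $0\rt A'\rt B'\rt A\rt 0$), so after Yoneda the three four-term presentations sit over rows that are only left exact, and one must invoke the almost split property itself to control the connecting maps, as in the unnumbered lemma preceding Theorem \ref{almostpreserving}. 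Third, you never check that $\Psi({\rm U})\neq 0$: if the initial term ${\rm U}=\tau_{\CS}({\rm W})$ were itself of type $(a)$ or $(b)$, the image sequence would collapse, so one must deduce from the hypotheses on ${\rm W}$, via the known computation of $\tau_{\CS}$, that this cannot occur. Each of these is precisely the content carried out in \cite{H}, so your proposal is a sound outline with genuine gaps at those three points rather than a wrong approach.
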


The following theorem is one of the main results in this section.

\begin{theorem}\label{TMS6}
Assume $\La$ is of finite representation type and $\underline{\rm A}$ is its stable Auslander algebra. Let $S$ be a simple non-projective $\underline{\rm A}$-module. Then, exactly one of the followings hold:

\begin{itemize}
		\item[$(1)$] the Auslander-Reiten translate $\tau_{\underline{\rm A}}(S)$  is projective.
	\item[$(2)$]	there exists a simple $\underline{\rm A}$-module $S'$ such that
	$\tau_{\underline{\rm A}}(S)\simeq \Omega^{-1}_{\underline{\rm A}}(S').$
	In this case, $\Ext^2_{\underline{\rm A}}(S, S')\simeq D\underline{\rm{Hom}}_{\underline{\rm A}}(S, S)$.
	\end{itemize}	
\end{theorem}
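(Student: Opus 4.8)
The plan is to translate the statement entirely into the functor category $\mmod(\mmod\La)$ and then into the morphism category $\CH$, and to read off the conclusion from the explicit almost split sequences recorded in Sections~4 and 6. First I would use the equivalence $\mmod\underline{\rm A}\simeq\mmod(\underline{\rm{mod}}\mbox{-}\La)$ together with the bijection between simple $\underline{\rm A}$-modules and indecomposable non-projective $\La$-modules; write $S=S_M$ for such an $M$, with minimal projective resolution in $\mmod(\mmod\La)$ coming from the almost split sequence $0\rt N\rt K\rt M\rt 0$. Then, via the functor $\Psi$ of Lemma~\ref{almostspliMono} (or, equivalently, via $\Theta$ and Theorem~\ref{Theorem-morphism-functor}), I would identify $S_M$ with the image of a suitable object of $\CS\subseteq\CH$; the key point will be that $\Omega^{-1}_{\underline{\rm A}}$ and $\tau_{\underline{\rm A}}$ can be computed on the morphism-category side, where we have Lemma~\ref{LemmaBoundary} and the formulas for $\tau_{\CH}^{-1}$ recalled after Corollary~\ref{main 1}.

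Concretely, the second syzygy computation already done at the end of Section~5 is the crux: for an indecomposable non-projective $M$ with minimal projective presentation $P\rt Q\rt M\rt 0$ there is an exact sequence $0\rt(\tau^{-1}(M),-)\rt (Q,-)\rt(P,-)\rt D(-,M)\rt 0$, exhibiting $(\tau^{-1}(M),-)=\Omega^2 D(-,M)$ in $\mmod(\mmod\La)^{\rm op}$, equivalently $(-,\tau(M))=\Omega^2 D(M,-)$ on the other side. Dualizing and passing through $e_M$, this says the simple $\underline{\rm A}$-module $\tau_{\underline{\rm A}}(S_M)$ sits as $\Omega^{-2}$ of a simple module, provided the intermediate syzygies do not pick up projective summands; the dichotomy in the statement is exactly the dichotomy between ``$\tau_{\CH}(0\rt M)$ meets a projective'' and ``it does not''. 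So I would argue: either $\tau_{\underline{\rm A}}(S)$ is projective (case~(1)), or one may peel off $\Omega^{-1}_{\underline{\rm A}}$ once and land on $\Omega^{-1}_{\underline{\rm A}}(S')$ for a genuinely simple $S'$, using that in a self-injective-type situation syzygies of simples over $\underline{\rm A}$ relate to simples again — here is where the almost split sequences of \cite{HE} referenced in the introduction, and the description of $\CP(\CH_{\CX})$, $\CI(\CH_{\CX})$ from Section~3, get invoked to control which summands appear.

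For the $\Ext^2$ isomorphism in case~(2), I would use the ARS-duality of Lemma~\ref{ARS duality} applied to $\underline{\rm A}$ (or rather to $\mmod(\underline{\rm{mod}}\mbox{-}\La)$, which is a dualizing variety): $\Ext^2_{\underline{\rm A}}(S,S')\simeq\Ext^1_{\underline{\rm A}}(S,\Omega^{-1}_{\underline{\rm A}}(S'))\simeq\Ext^1_{\underline{\rm A}}(S,\tau_{\underline{\rm A}}(S))\simeq D\underline{\Hom}_{\underline{\rm A}}(S,S)$, where the first isomorphism is dimension shift, the second is case~(2), and the last is precisely the Auslander-Reiten-Serre formula $\eta_{S,S}$. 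One must check the dimension-shift step is legitimate, i.e.\ that $\Omega^{-1}_{\underline{\rm A}}(S')$ has no injective summands and the relevant $\Ext^1$ vanishing holds; this follows because $S'$ is simple, hence indecomposable and, being a syzygy of a non-projective module, non-injective in the stable category.

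The main obstacle I expect is bookkeeping the projective/injective summands: showing that $\Omega^{-1}_{\underline{\rm A}}$ applied to $\tau_{\underline{\rm A}}(S')$-type objects stays within the simple modules up to one syzygy, and pinning down exactly when case~(1) versus case~(2) occurs. This is where the explicit almost split sequences of Lemma~\ref{LemmaBoundary} and Propositions~\ref{Aslmost-C-0}--\ref{Almost-0-A}, together with the structure of $\CH_{\CX}$, must be combined carefully; the rest is formal manipulation with $\Theta$, $\Psi$, $e_M$, and ARS-duality.
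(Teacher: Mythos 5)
Your overall strategy (pass to $\mmod(\underline{\rm mod}\mbox{-}\La)$ via the bijection $S=S_C$, compute on the morphism-category side, and finish the $\Ext^2$ claim by AR/ARS duality) points in the right direction, and your treatment of the $\Ext^2$ isomorphism by cosyzygy shift plus the Auslander--Reiten formula is essentially what the paper does. But the step you call the crux does not prove the dichotomy. The exact sequence $0\rt(\tau^{-1}(M),-)\rt (Q,-)\rt(P,-)\rt D(-,M)\rt 0$ from the end of Section~5 lives in $\mmod(\mmod\La)^{\rm op}$, i.e.\ over the Auslander algebra ${\rm A}$, where the representable functors at projective $\La$-modules do not vanish; it says nothing directly about $\underline{\rm A}$-modules, and the objects appearing in it ($D(-,M)$ is an injective functor, $(-,\tau(M))$ a projective one) are not the simple modules the theorem is about. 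The inference ``this says $\tau_{\underline{\rm A}}(S_M)$ sits as $\Omega^{-2}$ of a simple module'' is a non sequitur, and in any case the theorem asserts $\tau_{\underline{\rm A}}(S)\simeq\Omega^{-1}_{\underline{\rm A}}(S')$, a single cosyzygy. You are in effect reaching for the argument of Theorem~\ref{Theorem 5.2}, which concerns ${\rm A}$, not $\underline{\rm A}$.

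Concretely, the paper's proof hinges on three things you do not supply. First, the dichotomy is not ``whether $\tau_{\CH}(0\rt M)$ meets a projective'' (note $\tau_{\CH}(0\rt C)=(\tau(C)\st{1}\rt\tau(C))$ always, by Lemma~\ref{LemmaBoundary}); it is whether $A=\tau(C)$ is projective in $\mmod\La$. When $A$ is projective, the almost split sequence in $\CS$ ending at $(A\rt B)$ starts at $({\rm rad}(A)\rt A)$, and applying $\Psi$ gives $\tau_{\underline{\rm A}}(S)\simeq(-,\underline{A/{\rm rad}(A)})$, which is projective: that is case~(1). Second, when $A$ is not projective one must explicitly produce $S'$; it is $S'=S_{\tau(C)}=(-,A)/{\rm rad}(-,A)$, obtained by computing $\tau_{\CS}(A\rt B)$ from a second almost split sequence $0\rt A'\rt B'\rt A\rt 0$ and a pushout along the injective envelope of $A'$. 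Third, the identification $\tau_{\underline{\rm A}}(S)\simeq\Omega^{-1}_{\underline{\rm A}}(S')$ rests on the fact that $\Ext^1_\La(-,A')$ is an injective object of $\mmod(\underline{\rm mod}\mbox{-}\La)$ (\cite[Proposition~7.4]{AR74}) and receives $S'$ as its socle via the four-term sequence attached to $0\rt A'\rt B'\rt A\rt 0$; without this input the ``peeling off one cosyzygy'' step has no justification. These are the missing ideas, not mere bookkeeping of projective summands.
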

\begin{proof}
According to aforementioned remarks, the simple non-projective module $S$ corresponds to a simple functor $(-, C)/\text{rad}(-, C)$ lying in the exact sequence
$$0 \rt (-, A)\st{(-, f)} \rt (-, B)\st{(-, g)}\rt (-, C)\rt (-, C)/\text{rad}(-, C)\rt 0$$
in $\mmod (\mmod \La)$ in such a way that $\lambda: 0 \rt A\st{f}\rt B\st{g  } \rt C\rt 0$ is an almost split sequence in $\mmod \La$. Note that the middle term $B$ may not be projective since otherwise there exists an isomorphism $(-, C)/\text{rad}(-, C)\simeq (-, \underline{C})$ which is against non-projectivity of $S$.

\vspace{.1 cm}

We distinguish two cases: Assume first that $A$ is projective. So by Proposition 3.3 of \cite{HE}, there exists an almost split sequence
	$$\xymatrix@1{0\ar[r] & {\left(\begin{smallmatrix} \text{rad}(A)\\ A\end{smallmatrix}\right)}_{i}
	\ar[r]
	&  {\left(\begin{smallmatrix}A\\ A\end{smallmatrix}\right)}_{1}\oplus{\left(\begin{smallmatrix}\text{rad}(A)\\
		B\end{smallmatrix}\right)}_{fi}\ar[r]&
	{\left(\begin{smallmatrix}A\\B\end{smallmatrix}\right)}_{f}\ar[r]& 0 }$$
in $\CS(\La)$. Hence, in view of Lemma \ref{almostspliMono},  we get the almost split sequence
$$0 \rt \Psi{\left(\begin{smallmatrix} \text{rad}(A)\\ A\end{smallmatrix}\right)}_{i}\rt \Psi {\left(\begin{smallmatrix}\text{rad}(A)\\
		B\end{smallmatrix}\right)}_{fi} \rt \Psi{\left(\begin{smallmatrix}A\\
		B\end{smallmatrix}\right)}_{f}\rt 0$$
in $\mmod (\underline{\rm{mod}}\mbox{-}\La)$. Since $A$ is projective, the definition of $\Psi$ shows that $\Psi ({\rm rad}(A) \st{i} \rt A)\simeq (-, \underline{A/\text{rad}(A)})$. Likewise, as $\lambda$ does not split, we have $\Psi(A\st{f}\rt B)\simeq (-, C)/\text{rad}(-, C)$. Hence $\tau_{\underline{A}}((-, C)/\text{rad}(-, C))\simeq (-, \underline{A/\text{rad}(A)})$ that proves the claim in this case.
		
\vspace{.1 cm}

Assume next that $A$ is not projective. Then there exists an almost split sequence
$$\epsilon: 0 \rt A'\rt B'\rt A \rt 0$$
in  $\mmod \La$. Applying \cite[Lemma 6.3]{H} on $\lambda$ and $\epsilon$,  one infers the almost split sequences
	$$\xymatrix@1{0\ar[r] & {\left(\begin{smallmatrix} A\\ A\end{smallmatrix}\right)}_{1}
	\ar[r]
	&  {\left(\begin{smallmatrix}A\\
		B\end{smallmatrix}\right)}_{f}\ar[r]&
	{\left(\begin{smallmatrix}0\\C\end{smallmatrix}\right)}_{0}\ar[r]& 0 } \ \  \text{and}$$

	$$\xymatrix@1{0\ar[r] & {\left(\begin{smallmatrix}A'\\ I\end{smallmatrix}\right)}_{e}
	\ar[r]
	&  {\left(\begin{smallmatrix}B'\\ I\oplus A\end{smallmatrix}\right)}_{h}\ar[r]&
	{\left(\begin{smallmatrix}A\\A\end{smallmatrix}\right)}_{1}\ar[r]& 0 }$$
in $\CS$ where the second one is obtained from the push-out diagram
$$\xymatrix{		 A' \ar[d]^{e} \ar[r] & B' \ar[d]^h
	\ar[r]& A \ar@{=}[d] & (\dagger)& \\
	I\ar[d]^e \ar[r] & I\oplus A
	\ar[d]^d \ar[r] & A  && \\
	\Omega^{-1}_{\La}(A)  \ar@{=}[r] & \Omega^{-1}_{\La}(A)
	 & &&  }	$$
in which  $e:A'\rt I$ is the injective envelope. From \cite[Lemma 3.3]{HZ}, we can write $(B'\st{h}\rt I\oplus A)\simeq {\rm X}\oplus (J\st{1}\rt J)$, where ${ \rm X}$ is an indecomposable non-projective object and $J$ is either zero or isomorphic to $I$. It follows then that $\tau_{\CS}(A\st{f}\rt B)\simeq {\rm X}$. Accordingly, by taking into account that $(-, C)/\text{rad}(-, C)\simeq\Psi(A\st{f}\rt B)$ by the exact sequence mentioned at the beginning of the proof, another application of  Lemma \ref{almostspliMono}  shows that
\begin{equation}
F:=\tau_{\mmod (\underline{\rm{mod}}\mbox{-}\La)}((-, C)/\text{rad}(-, C))\simeq \Psi({\rm X}).
\end{equation}
However, the definition of $\Psi$ yields $F=\Psi(B'\st{h}\rt I\oplus A)$. Therefore, abusing the notation, we may write $\tau_{\underline{\rm A}}(S)\simeq F$.

\vspace{.1 cm}

Regarding the definition of $\Psi$, the middle column of $(\dagger)$ gives the long exact sequence
	\[
  \footnotesize
  \xymatrix@C=15pt{		
0\ar[r]	&(-, B')\ar[r]&(-, I\oplus A)\ar[r]&(-, \Omega^{-1}_{\La}(A))\ar[rr]\ar[rd]& & \Ext^1_{\La}(-, B')
	\ar[r] & \Ext^1_{\La}(-, I\oplus A)  \\
&&&&F\ar[ru]	&
	&   }
\]
in $\mmod (\mmod \La)$ that implies $F=\Ker(\Ext^1_{\La}(-, B')\rt \Ext^1_{\La}(-, A))$ because $I$ is injective.
On the other hand, since $\epsilon$ is an almost split sequence, our previous considerations show that there exists an exact sequence
\[
  \footnotesize
  \xymatrix@C=10pt{		
	0\ar[r]&(-, A')\ar[r]& (-, B')\ar[r]& (-, A)\ar[rr]\ar[rd] & & \Ext^1_{\La}(-, A')\ar[rr]\ar[rd] & & \Ext^1_{\La}(-, B')
	\ar[r] & \Ext^1_{\La}(-,  A)  \\
	&&&& (-, A)/\text{rad}(-, A)\ar[ur] && F\ar[ur]	&&
	   }
\]
of functors. Invoking \cite[Proposition 7.4]{AR74}, we see that $\Ext^1_{\La}(-, A')$ is an injective functor in $\mmod (\underline{\rm{mod}}\mbox{-}\La)$ and so the induced short exact sequence $0 \rt (-, A)/\text{rad}(-, A)\rt \Ext^1_{\La}(-, A')\rt F \rt 0$ gives $F=\Omega^{-1}_{\underline{\rm A}}((-, A)/\text{rad}(-, A))$. Now it suffices to set $S'=(-, A)/\text{rad}(-, A)$. Notice that the last assertion in the theorem is an upshot of the Auslander-Reiten formula.
\end{proof}

Based on previous theorem, in the following result we establish a bijection between  certain simple modules  over $\underline{A}$ and $\La$. This provides an interesting application concerning the stable equivalences of Artin algebras.

\begin{corollary}\label{CEQ}
Let $\La$ be of finite representation type and $\underline{\rm A}$ be its stable Auslander algebra. There exists a bijection between
\begin{itemize}
\item [$(1)$] the set of isomorphism classes of  non-projective simple modules $S \in \mmod \underline{{\rm A}}$ whose Auslander-Reiten translate $\tau_{\underline{\rm A}} (S)$  is projective; and

\item [$(2)$] the set of isomorphism classes of indecomposable non-injective projective modules $P \in \mmod \La$ such that the middle term of the almost split sequence starting  from $P$ is not projective; and

\item [$(3)$] the set of isomorphism classes of simple modules $S \in \mmod \La$ whose projective cover $P(S)$ is non-injective, and the middle term of the almost split sequence starting from $P(S)$ is not projective.
\end{itemize}
\end{corollary}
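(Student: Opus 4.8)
The plan is to construct the bijections by composing the bijection between non-projective simple $\underline{\rm A}$-modules and indecomposable non-projective $\Lambda$-modules (recalled at the beginning of this section) with the analysis carried out in Theorem \ref{TMS6}. First I would make precise the correspondence underlying case $(1)$ of Theorem \ref{TMS6}. Given a non-projective simple $S \in \mmod\underline{\rm A}$ with $\tau_{\underline{\rm A}}(S)$ projective, we have $S \simeq (-,C)/{\rm rad}(-,C)$ for a (unique up to isomorphism) indecomposable non-projective $\Lambda$-module $C$, sitting in the almost split sequence $\lambda: 0 \rt A \st{f}\rt B \st{g}\rt C \rt 0$. The proof of Theorem \ref{TMS6} shows that $\tau_{\underline{\rm A}}(S)$ is projective \emph{precisely} when $A$ is a projective $\Lambda$-module, in which case $\tau_{\underline{\rm A}}(S)\simeq (-,\underline{A/{\rm rad}(A)})$. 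So the map $S \mapsto A$ sends the set in $(1)$ into the indecomposable projective $\Lambda$-modules. I would then observe that $A = \tau(C)$ for an indecomposable non-projective $C$; a projective module $A$ is of the form $\tau(C)$ if and only if $A$ is non-injective (the standard fact that $\tau$ gives a bijection between non-projective and non-injective indecomposables, so $\tau^{-1}(A)$ exists iff $A$ is non-injective), and in that case $C = \tau^{-1}(A)$ and the almost split sequence $\lambda$ is exactly the one \emph{starting} from $A$. Therefore $\lambda$ has the form $0 \rt A \rt B \rt C \rt 0$ with middle term $B$; and the proof of Theorem \ref{TMS6} already records that $B$ is non-projective (otherwise $S$ would be projective). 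Conversely, any indecomposable non-injective projective $P$ with non-projective middle term in its outgoing almost split sequence produces, via $C := \tau^{-1}(P)$ and $S := (-,C)/{\rm rad}(-,C)$, an element of the set in $(1)$. This establishes the bijection between $(1)$ and $(2)$, with mutually inverse assignments $S \leftrightarrow A=\tau(C)$.

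Next I would handle the equivalence of $(2)$ and $(3)$, which is purely a statement inside $\mmod\Lambda$ and does not involve the Auslander algebra at all. Over an Artin algebra every indecomposable projective module $P$ is the projective cover of a unique simple module, namely $P/{\rm rad}(P)$, and this sets up a bijection between indecomposable projective modules and simple modules. Under this bijection, $P$ is non-injective if and only if its top simple $S = P/{\rm rad}(P)$ has non-injective projective cover $P(S)=P$ (tautologically), and "the middle term of the almost split sequence starting from $P$ is not projective" translates verbatim into the corresponding condition on $P(S)$. Thus $(2)$ and $(3)$ are in obvious bijection via $P \leftrightarrow P/{\rm rad}(P)$, and composing with the bijection between $(1)$ and $(2)$ finishes the proof.

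The only real content, and the step I expect to require the most care, is verifying that the set in $(1)$ maps \emph{onto} the set in $(2)$ and that the two assignments are genuinely inverse to each other — i.e. that when $P$ is indecomposable, non-injective, projective with non-projective middle term $B$ in the almost split sequence $0 \rt P \rt B \rt \tau^{-1}(P) \rt 0$, the module $\tau^{-1}(P)$ is indeed indecomposable and non-projective (so that $(-, \tau^{-1}(P))/{\rm rad}$ is a legitimate non-projective simple $\underline{\rm A}$-module), and that Theorem \ref{TMS6}(1) applies to it to return $\tau(\tau^{-1}(P)) = P$ on the nose. Indecomposability and non-projectivity of $\tau^{-1}(P)$ are standard properties of the Auslander--Reiten translate of an indecomposable non-injective module, and the round-trip identity is then immediate from the description of $\tau_{\underline{\rm A}}$ given in the proof of Theorem \ref{TMS6}. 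I would also remark, as the corollary's preamble suggests, that composing these bijections with the known correspondence realizing $\underline{\rm A}$-modules in functor-category terms yields the promised application to stable equivalences, but that remark is outside the formal content of the statement.
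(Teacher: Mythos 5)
Your proposal is correct and follows essentially the same route as the paper: identify the sets in $(1)$ and $(2)$ via the correspondence $S_C \leftrightarrow \tau(C)$ extracted from Theorem \ref{TMS6}, and match $(2)$ with $(3)$ by the standard top/projective-cover bijection. The only (minor) difference is that where you invoke the ``exactly one'' dichotomy of Theorem \ref{TMS6} to conclude that $\tau(C)$ must be projective for $S$ in the set $(1)$, the paper re-derives this exclusivity explicitly: if $\tau(C)$ were non-projective, then $\tau_{\underline{\rm A}}(S)\simeq\Omega^{-1}_{\underline{\rm A}}(S')$ being projective would split the sequence $0\rt S'\rt I\rt \Omega^{-1}_{\underline{\rm A}}(S')\rt 0$ with $I$ the injective envelope of $S'$, forcing $\Omega^{-1}_{\underline{\rm A}}(S')=0$ and hence $\tau_{\underline{\rm A}}(S)=0$, contradicting non-projectivity of $S$.
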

\begin{proof}
The bijection between $(2)$ and $(3)$ might be shown by restricting the well-known bijection between simple and indecomposable projective modules. The map from $(2)$ to $(1)$ is given by sending $P$ to $(-, \tau^{-1}(P))/{\rm rad}(-, \tau^{-1}(P))$, which is well-defined due to the argument given in Theorem \ref{TMS6}. Let now $S$ be a simple non-projective module in $\mmod \underline{\rm A}$ with $\tau_{\underline{\rm A}}(S)$ projective. We already know that there is an indecomposable non-projective $\Lambda$-module $C$ such that $S\simeq (-, C)/{\rm rad}(-, C)$. We claim that $\tau(C)$ is projective; otherwise, as in Theorem \ref{TMS6}, there exists a simple $\underline{\rm A}$-module $S'$ such that $\tau_{\underline{\rm A}}(S)\simeq \Omega^{-1}_{\underline{\rm A}}(S')$. Hence the short exact sequence $0 \rt S' \rt I\rt \Omega^{-1}_{\underline{\rm A}}(S')\rt 0$, in which $I'$ is the injective envelop of $S'$, splits. This means that $\Omega_{\underline{\rm A}}^{-1}(S')=0$ and so $\tau_{\underline{\rm A}}(S)=0$ which is against non-projectivity of $S$. Thus $\tau(C)$ is projective and setting $P:=\tau(C)$ completes the proof.
\end{proof}

Recall that two Artin algebras $\La$ and $\La'$ are said to be stably equivalent if there is an equivalence of categories $\underline{{\rm mod}}\mbox{-}\Lambda\simeq\underline{{\rm mod}}\mbox{-}\Lambda'$.
Denote by $n(\La)$ the number of iso classes of simple $\Lambda$-modules satisfying the third condition of the above corollary. As a byproduct, we show that $n(\Lambda)$ is an invariant of the stable equivalences.

\begin{proposition}
Let $\La$ and $\La'$ be of finite representation type and stably equivalent. Then $n(\La)=n(\La')$.
\end{proposition}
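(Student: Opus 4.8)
The plan is to reduce the statement to the categorical invariance of Auslander--Reiten theory under equivalences of functor categories, using the reformulation of $n(\La)$ obtained in Corollary \ref{CEQ}. By that corollary, $n(\La)$ — defined as the number of isomorphism classes of simple $\Lambda$-modules satisfying condition $(3)$ — equals the number of isomorphism classes of non-projective simple $\underline{\rm A}$-modules $S$ for which $\tau_{\underline{\rm A}}(S)$ is projective, where $\underline{\rm A}$ is the stable Auslander algebra of $\La$. Moreover, as recorded at the beginning of Section $6$, the evaluation functor gives an equivalence $\mmod\underline{\rm A}\simeq\mmod(\underline{\rm{mod}}\mbox{-}\La)$. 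Thus it suffices to show that the indicated count is invariant under a stable equivalence.

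First I would note that an equivalence $F\colon\underline{\rm{mod}}\mbox{-}\La\rt\underline{\rm{mod}}\mbox{-}\La'$ of additive Krull--Schmidt categories induces, by precomposition with a quasi-inverse of $F$, an equivalence $\mmod(\underline{\rm{mod}}\mbox{-}\La)\rt\mmod(\underline{\rm{mod}}\mbox{-}\La')$: an equivalence carries representable functors to representable functors, hence finitely presented functors to finitely presented functors, and the induced functor is again an equivalence. (Here both functor categories are genuinely abelian, because $\La$ and $\La'$ are of finite representation type, so that $\underline{\rm{mod}}\mbox{-}\La$ and $\underline{\rm{mod}}\mbox{-}\La'$ admit pseudokernels.) Composing with the evaluation equivalences of Section $6$ then yields an equivalence $e\colon\mmod\underline{\rm A}\rt\mmod\underline{\rm A'}$ of abelian categories.

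Next I would check that $e$ preserves the relevant class of objects. Being an equivalence of abelian categories, $e$ carries simple objects to simple objects and projective objects to projective objects, and it sends almost split sequences to almost split sequences; hence it intertwines the Auslander--Reiten translations, $e\circ\tau_{\underline{\rm A}}\simeq\tau_{\underline{\rm A'}}\circ e$. Consequently $e$ restricts to a bijection between the set of isomorphism classes of non-projective simple $\underline{\rm A}$-modules with projective Auslander--Reiten translate and the corresponding set for $\underline{\rm A'}$. Combining this with Corollary \ref{CEQ} gives $n(\La)=n(\La')$. The only point requiring any care — and it is essentially routine — is the passage from the stable equivalence to the equivalence of functor categories together with the observation that this equivalence, being exact, is automatically compatible with the Auslander--Reiten machinery; everything else is formal.
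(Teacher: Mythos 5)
Your proof is correct and follows essentially the same route as the paper: the paper simply asserts that a stable equivalence makes the stable Auslander algebras $\underline{\rm A}$ and $\underline{\rm A'}$ Morita equivalent and then invokes Corollary \ref{CEQ} together with Morita invariance, whereas you unpack exactly why that Morita equivalence exists (via the induced equivalence of functor categories $\mmod(\underline{\rm{mod}}\mbox{-}\La)\simeq\mmod(\underline{\rm{mod}}\mbox{-}\La')$ and the evaluation functors) and why it preserves non-projective simples with projective Auslander--Reiten translate. No gap; your version is just a more detailed rendering of the same argument.
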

\begin{proof}
Since $\La$ and $\La'$ are stably equivalent, it follows that the corresponding stable Auslander algebras $\underline{\rm A}$ and $\underline{\rm A'}$ are Morita equivalent. By Corollary \ref{CEQ}, we see that simple modules in $\mmod \La$  (resp. $\mmod \La'$) that satisfy condition $(3)$ correspond bijectively to non-projective simple modules over the stable Auslander algebra $\underline{\rm A}$ (resp. $\underline{\rm A'}$) with projective Auslander-Reiten translates. We are done since the modules of latter type are preserved under Morita equivalences.
\end{proof}

\color{black}


The following lemma is taken from \cite[Theorem 3.2]{HE}.

\begin{lemma}\label{almosSplittwo}
	Let $\delta: \  0 \rt A \st{f}\rt B \st{g}\rt C \rt 0$ and $\delta': \ 0 \rt A'\st{f'}\rt B'\st{g'}\rt A\rt 0$ be  almost split sequences in $\mmod\Lambda$. Then
	$$\xymatrix@1{0\ar[r] & {\left(\begin{smallmatrix} B'\\ A\end{smallmatrix}\right)}_{g'}
		\ar[rr]^-{\left(\begin{smallmatrix}\left[\begin{smallmatrix} g'\\1
			\end{smallmatrix}\right]\\\left[\begin{smallmatrix} 1\\ f
			\end{smallmatrix}\right]
			\end{smallmatrix}\right)}
		& & {\left(\begin{smallmatrix}A\\ A\end{smallmatrix}\right)}_{1}\oplus{\left(\begin{smallmatrix}B'\\
			B\end{smallmatrix}\right)}_{fg'}\ar[rr]^-{\left(\begin{smallmatrix}\left[\begin{smallmatrix} -1 &g'
			\end{smallmatrix}\right]\\\left[\begin{smallmatrix} -f &1
			\end{smallmatrix}\right]
			\end{smallmatrix}\right)}& &
		{\left(\begin{smallmatrix}A\\B\end{smallmatrix}\right)}_{f}\ar[r]& 0 },$$
	is an almost split sequence  in $\mathcal{H}$. Further, $\left(\begin{smallmatrix}B'\\
	B\end{smallmatrix}\right)_{fg'}$ is  an indecomposable object.
\end{lemma}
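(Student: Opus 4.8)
The plan is to check directly that the displayed sequence is an almost split sequence in $\CH$, reading off $\tau_{\CH}$ from Lemma \ref{LemmaBoundary} along the way; recall that $\CH$ has almost split sequences by Proposition \ref{p almost split seq}. First I would verify it is a conflation: its two component rows $0\to B'\to A\oplus B'\to A\to 0$ and $0\to A\to A\oplus B\to B\to 0$ are split exact (each left-hand map is a split monomorphism with an obvious retraction, each right-hand map a split epimorphism with an obvious section), and the two defining squares commute because the middle object is $(A\st 1\rt A)\oplus(B'\st{fg'}\rt B)$, i.e. $\left(\begin{smallmatrix}A\oplus B'\\ A\oplus B\end{smallmatrix}\right)_{h}$ with $h=\left(\begin{smallmatrix}1&0\\0&fg'\end{smallmatrix}\right)$; in particular the sequence lies in $\CE^{\rm cw}$. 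Next I would establish indecomposability of the three relevant objects: since $A,A',C$ are indecomposable and $\delta,\delta'$ do not split, a short case analysis on a hypothetical decomposition shows that $(A\st f\rt B)$, $(B'\st{g'}\rt A)$ and $(B'\st{fg'}\rt B)$ are all indecomposable (for the last one, combine a putative decomposition with the fact that $\Ker(fg')\cong A'$ and $\im(fg')\cong A$ are indecomposable to force $\delta$ or $\delta'$ to split). With this in hand, non-splitness of the whole sequence follows from Krull--Schmidt: a splitting would identify the indecomposable $(A\st 1\rt A)$ with $(B'\st{g'}\rt A)$ or with $(A\st f\rt B)$, impossible because $g'$ is a proper epimorphism and $f$ a proper monomorphism.

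Next I would pin down the Auslander--Reiten translate. Applying Lemma \ref{LemmaBoundary}(1) to $\delta$ gives an almost split sequence $0\to(A\st 1\rt A)\to(A\st f\rt B)\to(0\rt C)\to 0$, so $(A\st 1\rt A)\to(A\st f\rt B)$ is irreducible; applying Lemma \ref{LemmaBoundary}(2) to $\delta'$ gives an almost split sequence $0\to(A'\rt 0)\to(B'\st{g'}\rt A)\to(A\st 1\rt A)\to 0$, so $(B'\st{g'}\rt A)$ is, up to isomorphism, the unique indecomposable admitting an irreducible map to $(A\st 1\rt A)$. By the mesh relations this forces $\tau_{\CH}(A\st f\rt B)\simeq(B'\st{g'}\rt A)$, matching the left-hand term of the displayed sequence; and the irreducible map $(A\st 1\rt A)\to(A\st f\rt B)$ shows $(A\st 1\rt A)$ occurs in the middle term of the almost split sequence ending at $(A\st f\rt B)$, consistently with the displayed middle term.

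It remains to upgrade these observations to the statement that the displayed sequence \emph{is} the almost split sequence ending at $(A\st f\rt B)$, equivalently that its right-hand map $\pi$ is right almost split, and this is the main obstacle. The plan is a direct factorization argument: given a non-retraction $(\sigma_1,\sigma_2)\colon(M\st p\rt N)\to(A\st f\rt B)$ with $(M\st p\rt N)$ indecomposable, a routine bookkeeping computation using the shape of $\pi$ reduces the existence of a lift of $(\sigma_1,\sigma_2)$ along $\pi$ to the existence of $\theta\colon M\to B'$ with $\sigma_1-g'\theta$ factoring through $p$. If $\sigma_1$ is not a split epimorphism, then, $\delta'$ being almost split, $\sigma_1$ factors through $g'$ and one takes $\theta$ with $\sigma_1=g'\theta$. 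If $\sigma_1$ is a split epimorphism, one exploits that $\delta$ is almost split---so $g$ is right almost split, and $(M\st p\rt N)$ is indecomposable---to split off from $(\sigma_1,\sigma_2)$ a summand factoring through $(A\st 1\rt A)$ (which maps onto $(A\st f\rt B)$ through $\pi$ by $(1_A,f)$) and thereby return to the previous case. The delicate point, and the step I expect to cost the most work, is controlling this split-epimorphism case so that after the modification one genuinely lands in the non-split-epimorphism case. Once $\pi$ is known to be right almost split, the standard characterization of almost split sequences finishes the proof, the indecomposable description of $(B'\st{fg'}\rt B)$ already established giving the middle term.
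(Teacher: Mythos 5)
The paper gives no proof of this lemma at all --- it is imported verbatim from \cite[Proposition 3.2]{HE} --- so your direct verification is the only argument on the table. Most of it is sound: the conflation check, the indecomposability of $(A\st{f}\rt B)$, $(B'\st{g'}\rt A)$ and $(B'\st{fg'}\rt B)$ (for the last one you also need $\Coker(fg')\cong C$, or equivalently the remark that a complement of $\im(fg')=f(A)$ in $B$ would split $\delta$, in order to exclude a putative summand of the form $(0\rt Y)$), the Krull--Schmidt non-splitness argument, the mesh identification of $\tau_{\CH}(A\st{f}\rt B)$, and the reduction of the lifting problem to finding $\theta\colon M\rt B'$ and $\gamma\colon N\rt A$ with $\sigma_1=g'\theta+\gamma p$ are all correct, and the final passage from ``right almost split epi with indecomposable kernel and non-split'' to ``almost split'' is the standard characterization you cite.

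The genuine gap is exactly where you locate it, but your proposed mechanism (``modify $(\sigma_1,\sigma_2)$ so as to land in the non-split-epimorphism case'') is not how that case resolves; what actually happens is a dichotomy. Let $(M\st{p}\rt N)$ be indecomposable, let $(\sigma_1,\sigma_2)$ be a non-retraction and suppose $\sigma_1$ is a split epimorphism with section $s\colon A\rt M$. If $ps\colon A\rt N$ is a section, then $(M\st{p}\rt N)$ acquires a direct summand isomorphic to $(A\st{1}\rt A)$, hence is isomorphic to it by indecomposability, and then $(\sigma_1,\sigma_2)=(\sigma_1, f\sigma_1)$ visibly factors through the summand $(A\st{1}\rt A)$ of the middle term. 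If $ps$ is not a section, then $ps=hf$ for some $h\colon B\rt N$ because $f$ is left almost split; from $\sigma_2ps=f\sigma_1s=f$ one gets $(\sigma_2h-1_B)f=0$, hence $\sigma_2h=1_B+cg$ for some $c\colon C\rt B$. Since $g$ is not a retraction, $gc$ is a non-unit of the local Artinian ring $\End_\Lambda(C)$, so $cg$ is nilpotent and $\sigma_2h$ is an automorphism of $B$; moreover $(1_B+cg)^{-1}f=f$ because $gf=0$, so the pair $\bigl(s,\,h(\sigma_2h)^{-1}\bigr)$ is a morphism $(A\st{f}\rt B)\rt(M\st{p}\rt N)$ in $\CH$ splitting $(\sigma_1,\sigma_2)$ --- i.e.\ this subcase forces $(\sigma_1,\sigma_2)$ to be a retraction and therefore never occurs. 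This nilpotency/retraction argument is the missing step; without it the split-epimorphism case is not controlled and the proof is incomplete. With it, your outline closes.
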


The following theorem should be served as the second main result of this section.

\begin{theorem}\label{Theorem 5.2}
Assume $\La$ is a self-injective algebra  of finite representation type and let ${\rm A}$ be its  Auslander algebra. Let also $S$ be a simple  ${\rm A}$-module of projective dimension two. Then there exists a simple ${\rm A}$-module $S'$ of projective dimension two such that
$\Omega_{{\rm A}}(S')\simeq \tau^{-1}_{{\rm A}}(S)$. In this case, $\Ext^2_{{\rm A}}(S', S)\simeq D\overline{\rm{Hom}}_{{\rm A}}(S, S)$.
\end{theorem}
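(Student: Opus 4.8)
The plan is to mirror, in the projectively stable setting, the argument already carried out in Theorem \ref{TMS6} for the stable Auslander algebra, replacing there the role of $\mmod(\underline{\rm mod}\mbox{-}\La)$ by $\mmod(\mmod\La)\simeq\mmod{\rm A}$ and using self-injectivity of $\La$ to turn the monomorphism-category input into an epimorphism-category (equivalently, a dual) statement. First I would fix the simple ${\rm A}$-module $S$ of projective dimension two; via the equivalence $e_M$ it corresponds to a simple functor $(-, C)/{\rm rad}(-, C)$ in $\mmod(\mmod\La)$ for an indecomposable non-projective $\Lambda$-module $C$, sitting in the minimal projective resolution $0\rt(-, A)\rt(-, B)\rt(-, C)\rt S\rt 0$ associated with the almost split sequence $\lambda\colon 0\rt A\st{f}\rt B\st{g}\rt C\rt 0$. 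Since $\La$ is self-injective, an indecomposable module is projective iff it is injective; in particular the projective dimension of $S$ being exactly $2$ forces $A$ to be \emph{non-projective} (if $A$ were projective the resolution above would have length $\le 2$ with a different shape, and one checks against $\pd S=2$ as in the first case of Theorem \ref{TMS6}), so the almost split sequence $\epsilon\colon 0\rt A'\st{f'}\rt B'\st{g'}\rt A\rt 0$ exists.

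Next I would feed $\lambda$ and $\epsilon$ into Lemma \ref{almosSplittwo}: that lemma produces an explicit almost split sequence in $\CH$ whose terminal term is $\left(\begin{smallmatrix}A\\ B\end{smallmatrix}\right)_f$ and whose middle term contains the indecomposable summand $\left(\begin{smallmatrix}B'\\ B\end{smallmatrix}\right)_{fg'}$ (after splitting off the projective--injective $\left(\begin{smallmatrix}A\\ A\end{smallmatrix}\right)_1$ in the sense of $\CH$), so $\tau_{\CH}\!\left(\begin{smallmatrix}A\\ B\end{smallmatrix}\right)_f\simeq\left(\begin{smallmatrix}B'\\ B\end{smallmatrix}\right)_{fg'}$ up to projective summands. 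Dually (this is where I pass from $\CS$ to the cokernel side, using that $D_{\CH}$ is computed locally by $D$ on $\La$ and that $\La$ self-injective makes $D$ send $\mmod\La$ to itself nicely), I would apply the functor $\Theta'\colon \CH\rt\mmod(\mmod\La)^{\op}$, or equivalently translate the computation through the monomorphism-category functor $\Psi$ of \cite{H} exactly as in Theorem \ref{TMS6} but for the cosyzygy/inverse-translate direction; the upshot I want is an identification $\tau^{-1}_{{\rm A}}(S)\simeq F$ where $F=\Theta'$ (or $\Psi$) applied to a morphism object explicitly built from a pull-back or push-out diagram attached to $\epsilon$.

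The heart of the argument is then the syzygy computation: using the appropriate short exact sequences in $\mmod(\mmod\La)$ coming from that diagram (analogous to the two long exact sequences displayed in the proof of Theorem \ref{TMS6}, here with $\Ext^1_\La(-,-)$ replaced by its $\op$-version and the injective functor $\Ext^1_\La(-,A')$ replaced — via self-injectivity and \cite[Proposition 7.4]{AR74} — by a \emph{projective} functor over $\mmod\La$, hence a $\pd$-two module over ${\rm A}$), one reads off that $\tau^{-1}_{\rm A}(S)$ is the first syzygy $\Omega_{\rm A}(S')$ of a simple ${\rm A}$-module $S'=(-, A)/{\rm rad}(-, A)$, and that $S'$ again has projective dimension two because $A$ is non-projective (so the projective resolution of $S'$ is governed by the almost split sequence $\epsilon$ and has length exactly $2$ by the same self-injective dichotomy). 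Finally the $\Ext^2$ formula $\Ext^2_{\rm A}(S', S)\simeq D\overline{\rm Hom}_{\rm A}(S, S)$ follows from the Auslander–Reiten formula $\Ext^1_{\rm A}(N,\tau_{\rm A}M)\simeq D\overline{\rm Hom}_{\rm A}(M,N)$ applied with the shift $\Ext^2_{\rm A}(S',S)\simeq\Ext^1_{\rm A}(\Omega_{\rm A}(S'),S)\simeq\Ext^1_{\rm A}(\tau^{-1}_{\rm A}(S),S)$ and $\tau_{\rm A}\tau^{-1}_{\rm A}S\simeq S$ (legitimate here since $S$ is non-projective and, by $\pd S=2<\infty$, also $\tau^{-1}_{\rm A}S$ is non-injective, so the translates are honest inverses). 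The step I expect to be the main obstacle is the bookkeeping in the passage from Lemma \ref{almosSplittwo}'s morphism-category sequence to the functor category: one must check that applying $\Theta'$ (or $\Psi$) really kills exactly the spurious projective/injective summands and that the resulting $F$ is both indecomposable and genuinely non-projective over ${\rm A}$ — i.e. that the analogue of \cite[Lemma 3.3]{HZ} used to write $(B'\st{h}\rt I\oplus A)\simeq{\rm X}\oplus(J\st1\rt J)$ goes through, and that self-injectivity of $\La$ is what guarantees the "injective functor becomes projective functor" replacement needed to get a \emph{syzygy} rather than a cosyzygy on the nose.
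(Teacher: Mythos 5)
Your overall strategy --- glue two consecutive almost split sequences of $\mmod\La$ via Lemma \ref{almosSplittwo} into an almost split sequence in $\CH$ and then transport it to the functor category --- is exactly the paper's, but two concrete things go wrong in the execution. First, the transport functor: the statement lives in $\mmod {\rm A}\simeq\mmod (\mmod \La)$, so the correct tool is the covariant functor $\Theta$ together with Theorem \ref{almostpreserving}, which says that $\Theta$ carries the almost split sequence of ${\rm H}^{\rm cw}(\mmod\La)$ to an almost split sequence in $\mmod(\mmod\La)$. Neither functor you propose does this job: $\Theta'$ lands in $\mmod (\mmod \La)^{\rm op}$, and $\Psi$ lands in $\mmod (\underline{\rm{mod}}\mbox{-}\La)\simeq\mmod \underline{\rm A}$, which is precisely why it was the right tool for Theorem \ref{TMS6} (a statement about the \emph{stable} Auslander algebra) but not here. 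Relatedly, your reading of Lemma \ref{almosSplittwo} is off: the Auslander--Reiten translate of the end term $(A\st{f}\rt B)$ is the \emph{initial} term $(B'\st{g'}\rt A)$, not the middle-term summand $(B'\st{fg'}\rt B)$.

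Second, and more seriously, the indices are shifted by one, so the identity you ``read off'' is false. With your labels ($S=S_C=(-,C)/{\rm rad}(-,C)$, $\lambda$ the almost split sequence ending at $C$, and $\epsilon$ the one ending at $A=\tau(C)$), applying $\Theta$ to the $\CH$-sequence produced by Lemma \ref{almosSplittwo} yields the almost split sequence $0\rt S_A\rt \Theta(B'\st{fg'}\rt B)\rt\Theta(A\st{f}\rt B)\rt 0$ in $\mmod {\rm A}$, where $\Theta(A\st{f}\rt B)\simeq{\rm rad}(-,C)=\Omega_{\rm A}(S_C)$. This proves $\tau^{-1}_{\rm A}(S_A)\simeq\Omega_{\rm A}(S_C)$, i.e.\ the theorem for the simple module $S_A$ with $S'=S_C$; it does \emph{not} compute $\tau^{-1}_{\rm A}(S_C)$. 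Your asserted conclusion $\tau^{-1}_{\rm A}(S_C)\simeq\Omega_{\rm A}(S_A)$ would force $\tau^2(C)\simeq C$ and is wrong in general; the correct partner of $S_C$ is $S'=S_{\tau^{-1}(C)}$, obtained by pairing $\lambda$ with the almost split sequence \emph{starting} at $C$ (this is exactly the paper's setup, which calls your $C$ its $A$). The error is repairable by relabelling, since every simple of projective dimension two is of the form $S_A$ for $A$ indecomposable non-projective, hence non-injective by self-injectivity, so that $\tau^{-1}(A)$ exists; but as written your argument establishes a different identity from the one you claim. Your derivation of the $\Ext^2$ formula from the Auslander--Reiten formula is fine once the translate has been computed correctly.
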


\begin{proof}
We may identify the simple module $S$ by the simple functor $(-, A)/\text{rad}(-, A)$  lying in the exact sequence
$$0 \rt (-, A')\st{(-, f')} \rt (-, B')\st{(-, g')}\rt (-, A)\rt (-, A)/\text{rad}(-, A)\rt 0 \ \ \ (\dagger)$$
in $\mmod (\mmod \La)$ in such a way that $\delta: 0 \rt A'\st{f'}\rt B'\st{g'} \rt A\rt 0$ is an almost split sequence in $\mmod \La$. Let also $\delta': \ 0 \rt A\st{f}\rt B\st{g}\rt C\rt 0$ be  an almost split sequence in $\mmod \La$. Then by Lemma \ref{almosSplittwo} there exists an almost split sequence
	$$\xymatrix@1{0\ar[r] & {\left(\begin{smallmatrix} B'\\ A\end{smallmatrix}\right)}_{g'}
		\ar[rr]^-{\left(\begin{smallmatrix}\left[\begin{smallmatrix} g'\\1
			\end{smallmatrix}\right]\\\left[\begin{smallmatrix} 1\\ f
			\end{smallmatrix}\right]
			\end{smallmatrix}\right)}
		& & {\left(\begin{smallmatrix}A\\ A\end{smallmatrix}\right)}_{1}\oplus{\left(\begin{smallmatrix}B'\\
			B\end{smallmatrix}\right)}_{fg'}\ar[rr]^-{\left(\begin{smallmatrix}\left[\begin{smallmatrix} -1 &g'
			\end{smallmatrix}\right]\\\left[\begin{smallmatrix} -f &1
			\end{smallmatrix}\right]
			\end{smallmatrix}\right)}& &
		{\left(\begin{smallmatrix}A\\B\end{smallmatrix}\right)}_{f}\ar[r]& 0 }$$
in $\mathcal{H}$. Thanks to Theorem \ref{almostpreserving}, this induces the almost split sequence

$$0 \rt \Theta(B'\st{g'}\rt A)\rt \Theta(B'\st{fg'}\rt B)\rt \Theta(A\st{f}\rt B)\rt 0 \ \ \  (\dagger\dagger)$$
in $\mmod(\mmod\Lambda)$. Note that $(\dagger)$ implies $\Theta(B'\st{g'}\rt A)=(-, A)/{\rm rad}(-, A)=S$. Hence by $(\dagger\dagger)$,  $\tau^{-1}_{{\rm A}}(S)=\Theta(A\st{f}\rt B)$. Set now ${\rm W}=(A\st{f}\rt B)$. Then, by definitions, there exists an exact sequence
 \[
  \footnotesize
  \xymatrix@C=15pt{		
	(-, A)\ar[r]& (-,B)\ar[rr]\ar[rd]&&(-, C)\ar[rr]\ar[rd]& & \Ext^1_{\La}(-, A)
	\ar[r] & \Ext^1_\Lambda(-, B).  \\
	&&\Theta({\rm W})\ar[ur]&& (-, C)/{\rm rad}(-, C)\ar[ru]	&
	&   }
\]
Set $S'$ be the simple functor $(-, C)/{\rm rad}(-, C)$. Then the short exact sequence
$0 \rt \Theta({\rm W})\rt (-, C)\rt S'\rt 0$ proves the claim.
\end{proof}

\section{Auslander-Reiten components of Auslander algebras}
Unless otherwise specified, we assume throughout this section that  $\La$ is a non-semisimple indecomposable self-injective algebra of finite representation type, and ${\rm A}$ denotes its Auslander algebra. In the whole section, we use the identification $\mmod{\rm A}\simeq \mmod(\mmod\Lambda)$ described earlier. Once more, in this section, the quadruple family of objects in $\CH$ of types $(a), (b), (c),$ and $(d)$ become important. We aim to identify certain components of the (stable) Auslander-Reiten quiver of ${\rm A}$. To this end, we need firstly study particular $\tau_\CH$-periodic objects in $\CH$ and their periodicity.

\subsection{$\tau_{\CH}$-periodic objects} As we observed in  Theorem \ref{almostpreserving}, the functor $\Theta:\CH\rt \mmod {\rm A}$ behaves well with respect to  almost split sequences in the sense that if there exists an almost split sequence $0 \rt {\rm X}\rt {\rm Y}\rt {\rm Z} \rt 0$ in $\CH$ where ${\rm Z}$ is not of types  $(a)$, $(b)$ or $(c)$, then
$0 \rt \Theta({\rm X})\rt \Theta({\rm Y})\rt \Theta({\rm Z}) \rt 0$
is also an almost split sequence in $\mmod {\rm A}$. Also we have seen in Theorem  \ref{Theorem-morphism-functor} that one is given an equivalence
$\CH/\CV \simeq \mmod{\rm A}$, where $\CV$ is generated by the objects of type $(b)$ or $(c)$. Therefore, the Auslander-Reiten quiver $\Gamma_{{\rm A}}$ of the Auslander algebra ${\rm A}$ might be computed via the Auslander-Reiten quiver $\Gamma_{\CH}$ of $\CH$ by removing vertices corresponding to iso-classes of indecomposable objects of either types $(b)$ or $(c)$.

\vspace{.1 cm}

The following construction is vital for the rest of this section. It is mainly based on an analysis of various almost split sequences already obtained in \cite{HE}. For the sake of brevity, we prefer not to rewrite most of them here and suffice to give the precise reference number therein.

\begin{construction} \label{CM7} Let $C$ be an indecomposable non-projective $\Lambda$-module. There exist almost split sequences $\epsilon_1: 0 \rt \tau(C) \st{f}\rt B\st{g} \rt C \rt 0$ and $\epsilon_2: 0 \rt \tau^2(C)\st{f'} \rt B'\st{g'} \rt \tau(C)\rt 0$ in $\mmod \La$. Applying  Lemmas \ref{almosSplittwo} and \ref{LemmaBoundary} we deduce the almost split sequences


$$\xymatrix@1{0\ar[r] & {\left(\begin{smallmatrix} B'\\ \tau(C)\end{smallmatrix}\right)}_{g'}
		\ar[rr]^-{\left(\begin{smallmatrix}\left[\begin{smallmatrix} g'\\1
			\end{smallmatrix}\right]\\\left[\begin{smallmatrix} 1\\ f
			\end{smallmatrix}\right]
			\end{smallmatrix}\right)}
		& & {\left(\begin{smallmatrix}\tau(C)\\ \tau(C)\end{smallmatrix}\right)}_{1}\oplus{\left(\begin{smallmatrix}B'\\
			B\end{smallmatrix}\right)}_{fg'}\ar[rr]^-{\left(\begin{smallmatrix}\left[\begin{smallmatrix} -1 &g'
			\end{smallmatrix}\right]\\\left[\begin{smallmatrix} -f &1
			\end{smallmatrix}\right]
			\end{smallmatrix}\right)}& &
		{\left(\begin{smallmatrix}\tau(C)\\B\end{smallmatrix}\right)}_{f}\ar[r]& 0 },$$
$$\xymatrix@1{  0\ar[r] & {\left(\begin{smallmatrix} \tau(C)\\ \tau(C)\end{smallmatrix}\right)}_{1}
			\ar[rr]^-{\left(\begin{smallmatrix} 1 \\ f\end{smallmatrix}\right)}
			& & {\left(\begin{smallmatrix}\tau(C)\\ B\end{smallmatrix}\right)}_{f}\ar[rr]^-{\left(\begin{smallmatrix} 0 \\ g\end{smallmatrix}\right)}& &
			{\left(\begin{smallmatrix}0\\ C\end{smallmatrix}\right)}_{0}\ar[r]& 0, } $$	and
$$\xymatrix@1{  0\ar[r] & {\left(\begin{smallmatrix} \tau(C)\\ 0\end{smallmatrix}\right)}_{0}
			\ar[rr]^-{\left(\begin{smallmatrix} f \\ 0 \end{smallmatrix}\right)}
			& & {\left(\begin{smallmatrix} B\\ C\end{smallmatrix}\right)}_{g}\ar[rr]^-{\left(\begin{smallmatrix} g \\ 1\end{smallmatrix}\right)}& &
			{\left(\begin{smallmatrix}C \\ C\end{smallmatrix}\right)}_{1}\ar[r]& 0 } $$
in $\CH$.  Evidently, the indecomposable object $(B'\st{fg'}\rt B)$ is not projective; so let ${\rm X}:=\tau_{\CH}(B'\st{fg'}\rt B)$. Also, as $(B'\st{g'}\rt \tau(C))$ is not projective, we let ${\rm Y}:=\tau_{\CH}(B'\st{g'}\rt \tau(C))$ and note that ${\rm X}$ and ${\rm Y}$ are not projective. In view of \cite[Propositions 2.2, 4.1]{HE}, there exists an almost split sequence

 $$\xymatrix@1{  0\ar[r] & {\left(\begin{smallmatrix} \nu(P)\\ \nu(Q)\end{smallmatrix}\right)}_{\nu(h)}
			\ar[rr]
			& & {\rm Y} \oplus {\left(\begin{smallmatrix} I\\ 0\end{smallmatrix}\right)}_0 \ar[rr]& &
			{\left(\begin{smallmatrix}\tau^2(C)\\ 0\end{smallmatrix}\right)}_0\ar[r]& 0 } $$
in $\CH $ where $P\st{h}\rt Q \rt \tau^2(C)\rt 0$ is the minimal projective presentation, and $I$ is an injective module.
On the other hand, by Proposition 2.4 and Theorem 4.2 of \cite{HE}, we have the almost split sequence

 $$\xymatrix@1{  0\ar[r] & {\left(\begin{smallmatrix} 0 \\  \tau \nu \tau ^2(C)\end{smallmatrix}\right)_0}
			\ar[rr]
			& & \tau_{\CH}({\rm Y}) \oplus {\left(\begin{smallmatrix} 0\\ P\end{smallmatrix}\right)_0} \ar[rr]& &
			{\left(\begin{smallmatrix}\nu(P)\\ \nu(Q)\end{smallmatrix}\right)}_{\nu(h)}\ar[r]& 0 }  $$
in $\CH$ where $P$ is projective. Putting all together, one obtains the mesh
 	
		$$\xymatrix@-5mm{&&	\tau({\rm X})\ar[dr] &&{\rm X}\ar[dr]\ar@{.>}[ll] && {{\left(\begin{smallmatrix} B'\\ B\end{smallmatrix}\right)}_{fg'}}\ar[dr]\ar@{.>}[ll]& \\	
		&\tau({\rm Y})\ar[dr]\ar[ur]&	&{\rm Y}\ar[ur]\ar[dr]\ar@{.>}[ll]&& {{\left(\begin{smallmatrix} B'\\ \tau (C)\end{smallmatrix}\right)}_{g'}}\ar[dr]\ar[ur]\ar@{.>}[ll]&& {{\left(\begin{smallmatrix} \tau(C)\\ B\end{smallmatrix}\right)}_{f}} \ar[dr]\ar@{.>}[ll]\\
	{{\left(\begin{smallmatrix} 0 \\ \nu \tau^3(C)\end{smallmatrix}\right)_0}}\ar[ur]	&&	{{\left(\begin{smallmatrix} \nu(P)\\ \nu (Q)\end{smallmatrix}\right)}_{\nu(h)}}\ar[ru]\ar@{.>}[ll]&&	{{\left(\begin{smallmatrix} \tau^2(C)\\ 0\end{smallmatrix}\right)_0}}\ar[ur]\ar@{.>}[ll]&& {{\left(\begin{smallmatrix} \tau(C)\\ \tau(C)\end{smallmatrix}\right)}_{1}} \ar[ur]\ar@{.>}[ll]&&\ar@{.>}[ll]{{\left(\begin{smallmatrix} 0\\ C\end{smallmatrix}\right)_0}}	}$$	
in the Auslander-Reiten quiver $\Gamma_{{\rm A}}$ of ${\rm A}$ in which the vertices $(I\rt 0)$ and $(0\rt P)$ have been ignored.
\end{construction}

Let us recall from \cite[Remark 5.7]{HE} that $\mathscr{A}=\nu\tau^3$ defines an auto-equivalence on the stable category $\underline{\mmod}\Lambda$. As is expected, the $\mathscr{A}$-orbit of an indecomposable non-projective $\Lambda$-module $M$ consists of the modules $\mathscr{A}^m(M)$ where $m$ ranges over the integer numbers.

\begin{proposition}\label{4mperiodicityMorphism}
Let $\Lambda$ be self-injective (not necessarily of finite representation type) and suppose every indecomposable non-projective $\Lambda$-module possesses a finite $\mathscr{A}$-orbit.  Then any indecomposable non-projective object ${\rm X}$ in $\CH$ of either types  $(a)$, $(b)$, $(c)$ or $(d)$ is of $\tau_{\CH}$-periodicity a multiple of $4$.
\end{proposition}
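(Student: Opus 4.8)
The plan is to exploit the mesh picture assembled in Construction \ref{CM7}, which already exhibits, for every indecomposable non-projective $\Lambda$-module $C$, a stretch of the Auslander-Reiten quiver $\Gamma_{\rm A}$ running from $(0\rt C)$ back through $(\tau(C)\rt 0$-type vertices$)$ to $\bigl(\begin{smallmatrix}0\\ \nu\tau^3(C)\end{smallmatrix}\bigr)_0$, that is, to $(0\rt \mathscr{A}(C))$ up to identifying $\nu\tau^3$ with the functor $\mathscr{A}$ on $\underline{\mmod}\Lambda$. The first step is to read off from that mesh the precise shift it induces on objects of the four distinguished types: tracing the dotted arrows (the $\tau_{\CH}$-action) four steps to the left takes $(0\rt C)$ to $(0\rt\mathscr{A}(C))$, and simultaneously takes $(C\st{1}\rt C)$, $(C\rt 0)$ and the type-$(d)$ object $\bigl(\begin{smallmatrix}B'\\ B\end{smallmatrix}\bigr)_{fg'}$ (whose $\tau_{\CH}$ is ${\rm X}$) to the corresponding objects built from $\tau^2(C)$, respectively $\nu\tau^3(C)=\mathscr{A}(C)$, so that in each of the four cases $\tau_{\CH}^{4}$ acts, up to isomorphism in $\CH$, by replacing the underlying module data with its image under $\mathscr{A}$. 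One must be a little careful here because $\nu(P)$, $\nu(Q)$, $I$ and the various $(I\rt 0)$, $(0\rt P)$ summands appearing in the meshes are projective or injective in $\CH$ and hence vanish in $\Gamma_{\rm A}=\Gamma_{\CH}/\CV$; I would phrase everything in $\Gamma_{\rm A}$ from the start, so these spurious summands are simply absent.

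The second step is the bookkeeping: if $C$ has finite $\mathscr{A}$-orbit, say $\mathscr{A}^{m}(C)\simeq C$ in $\underline{\mmod}\Lambda$ for some minimal $m\ge 1$, then iterating the identification from Step 1 gives $\tau_{\CH}^{4m}({\rm X})\simeq{\rm X}$ for ${\rm X}$ any of the four listed types built on $C$; hence ${\rm X}$ is $\tau_{\CH}$-periodic with period dividing $4m$. It remains to show the period is a multiple of $4$. For this I would argue that within a single $\tau_{\CH}$-orbit the four types genuinely alternate: the mesh shows that $\tau_{\CH}$ carries a type-$(a)$ vertex $(0\rt C)$ first to the type-$(b)$ vertex $(\tau(C)\st 1\rt\tau(C))$, then (one more step left, using Lemma \ref{LemmaBoundary}(2) read backwards) to the type-$(c)$ vertex $(\tau(C)\rt 0)$, then to the type-$(d)$ vertex, and only after a fourth step does one return to a type-$(a)$ vertex (now $(0\rt\mathscr{A}(C))$). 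Since types $(a),(b),(c),(d)$ are pairwise non-isomorphic classes of indecomposables in $\CH$, no shorter period is possible, so the $\tau_{\CH}$-periodicity of any such ${\rm X}$ is exactly $4$ times the $\mathscr{A}$-period of the associated module, in particular a multiple of $4$.

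The main obstacle I expect is Step 1 — verifying rigorously that the four meshes of Construction \ref{CM7}, once the projective/injective $\CH$-summands are discarded, genuinely splice together into a single connected strip of $\Gamma_{\rm A}$ realizing $\tau_{\CH}^{4}$ as $\mathscr{A}$, and in particular that the object ${\rm Y}=\tau_{\CH}\bigl(\begin{smallmatrix}B'\\ \tau(C)\end{smallmatrix}\bigr)_{g'}$ and its translate $\tau_{\CH}({\rm Y})$ occurring in the last two meshes really are the vertices sitting between $(\tau^2(C)\rt 0)$ and $(0\rt\nu\tau^3(C))$ in the composite picture, with no hidden extra summands. This is essentially a careful diagram-chase through the cited results \cite[Propositions 2.2, 2.4, 4.1, 4.2, Remark 5.7]{HE}; the remaining combinatorial steps are then routine.
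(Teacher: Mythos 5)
Your proposal is correct and follows essentially the same route as the paper: both read off from the mesh of Construction \ref{CM7} that $\tau_{\CH}^{4}(0\rt C)\simeq(0\rt\mathscr{A}(C))$ and then iterate using the finiteness of the $\mathscr{A}$-orbit (the paper simply reduces all four types to type $(a)$ first, since they lie in a common $\tau_{\CH}$-orbit). Your additional observation that the types $(a),(b),(c),(d)$ alternate along the $\tau_{\CH}$-orbit, pinning the minimal period down to exactly four times the $\mathscr{A}$-period, is a small refinement that the paper's proof leaves implicit.
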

\begin{proof}
According to our previous observations, all mentioned objects lie in the $\tau_{\CH}$-orbit of some indecomposable object of type $(a)$. Hence it suffices to prove the statement only for  ${\rm X}=(0\rt N)$ with $N$ an indecomposable non-projective module. Justified by the hypothesis, choose a least integer $n$ with $\mathscr{A}^n(N)=N$.   Considering the particular mesh in $\Gamma_{{\rm A}}$ as illustrated in Construction \ref{CM7}, we get $\tau_{\CH}^4 (0 \rt N )\simeq (0 \rt \mathscr{A}(N))$ and thus $\tau_{\CH}^{4n}(0\rt N)\simeq(0\rt \mathscr{A}^n(N))=(0\rt N)$.
\end{proof}
	
Based on previous proposition, we are now able to prove the following theorem which will prove useful later on.

\begin{theorem}\label{PS2P}  Assume $\Lambda$ is self-injective (not necessarily of finite representation type) and every indecomposable non-projective $\Lambda$-module possesses a finite $\mathscr{A}$-orbit. Then every simple ${\rm A}$-module  of projective dimension $2$ is $\tau_{\rm A}$-periodic of periodicity divided by $4$.
\end{theorem}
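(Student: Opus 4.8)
The plan is to realise a simple $\rm A$-module of projective dimension $2$ as $\Theta$ applied to a distinguished object of $\CH$, to transport the periodicity question to $\CH$ where Construction \ref{CM7} is available, and then to push the answer back through $\Theta$. First I would record, from \cite{A76} and the recollections of this section, that the simple $\rm A$-modules of projective dimension exactly $2$ are precisely the functors $S_A:=(-,A)/\mathrm{rad}(-,A)$ with $A$ an indecomposable \emph{non-projective} $\Lambda$-module: the minimal projective resolution is $0\to(-,\tau A)\to(-,B')\xrightarrow{(-,g')}(-,A)\to S_A\to 0$, where $0\to\tau(A)\to B'\xrightarrow{g'}A\to 0$ is the almost split sequence ending at $A$, and by the very definition of $\Theta$ this says exactly that $S_A=\Theta({\rm M}_A)$ for ${\rm M}_A:=\left(\begin{smallmatrix}B'\\A\end{smallmatrix}\right)_{g'}$. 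This ${\rm M}_A$ is indecomposable, is of none of the types $(a)$--$(d)$, and (by Proposition \ref{projective-object}) is non-projective in $\CH$ and in ${\rm H}^{\rm cw}(\mmod\Lambda)$. So it is enough to prove that ${\rm M}_A$ is $\tau_{\CH}$-periodic of period a multiple of $4$ and then to transport this through $\Theta$.

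Next I would locate ${\rm M}_A$ inside Construction \ref{CM7}. Since $\Lambda$ is self-injective, $A$ is non-injective, so $C:=\tau^{-1}(A)$ is an indecomposable non-projective module with $\tau(C)=A$; running Construction \ref{CM7} with this $C$ places ${\rm M}_A$ on the middle row of the displayed mesh as the vertex $\left(\begin{smallmatrix}B'\\\tau(C)\end{smallmatrix}\right)_{g'}$, with $\tau_{\CH}^{-1}({\rm M}_A)=\left(\begin{smallmatrix}\tau(C)\\B\end{smallmatrix}\right)_f$, $\tau_{\CH}({\rm M}_A)={\rm Y}$, and $\tau_{\CH}({\rm Y})$ pinned down by the two almost split sequences imported there from \cite{HE}. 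The key point --- the middle-row counterpart of what makes Proposition \ref{4mperiodicityMorphism} work --- is that the mesh attached to $C$ glues along its boundary column to the mesh attached to $\mathscr{A}(C)=\nu\tau^3(C)$, so that composing the four consecutive almost split sequences of Construction \ref{CM7} gives
$$\tau_{\CH}^{4}({\rm M}_A)\simeq{\rm M}_{\mathscr{A}(A)}.$$
Since, by hypothesis, the $\mathscr{A}$-orbit of $A$ is finite, say of cardinality $n$, and since $M\mapsto{\rm M}_M$ is injective on isomorphism classes while $\tau_{\CH}^{j}({\rm M}_A)$ is never of the form ${\rm M}_{M'}$ when $4\nmid j$, this forces ${\rm M}_A$ to be $\tau_{\CH}$-periodic of period exactly $4n$, in particular a multiple of $4$.

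To finish I would transfer the periodicity through $\Theta$. The $\tau_{\CH}$-orbit of ${\rm M}_A$ consists of the objects ${\rm M}_{\mathscr{A}^k A}$, $\left(\begin{smallmatrix}\mathscr{A}^k A\\ B\end{smallmatrix}\right)_f$, ${\rm Y}_{\mathscr{A}^k A}$ and $\tau_{\CH}({\rm Y}_{\mathscr{A}^k A})$ for $k\in\Z$, and none of these lies in $\CV$ nor is of type $(a)$: for the first two this is immediate, and for the other two one computes, from the degree-wise split sequences of Construction \ref{CM7} and the cokernel argument preceding Theorem \ref{almostpreserving}, that $\Theta({\rm Y}_M)\cong\Theta\left(\begin{smallmatrix}\nu P\\\nu Q\end{smallmatrix}\right)_{\nu(h)}$ and that $\Theta(\tau_{\CH}{\rm Y}_M)$ sits in a short exact sequence $0\to(-,\mathscr{A} C)\to\Theta(\tau_{\CH}{\rm Y}_M)\to\Theta\left(\begin{smallmatrix}\nu P\\\nu Q\end{smallmatrix}\right)_{\nu(h)}\to 0$, both of these functors being nonzero and non-projective. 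Hence Theorem \ref{almostpreserving} (applicable since both ${\rm Z}$ and $\tau_{\CH}({\rm Z})$ lie outside $\CV$), together with Corollary \ref{cor dg slplit almost split} and the opening discussion of this section, gives $\tau_{\rm A}(\Theta({\rm Z}))\simeq\Theta(\tau_{\CH}({\rm Z}))$ at every step of the orbit; combined with the equivalence $\CH/\CV\simeq\mmod{\rm A}$ of Theorem \ref{Theorem-morphism-functor} --- under which two indecomposables outside $\CV$ agree if and only if their $\Theta$-images do, so that $\tau_{\CH}^{j}({\rm M}_A)\simeq{\rm M}_A$ exactly when $\tau_{\rm A}^{j}(S_A)\simeq S_A$ --- this shows that $S=S_A$ is $\tau_{\rm A}$-periodic of period $4n$, a multiple of $4$.

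I expect the main obstacle to be the gluing in the second step: one has to check with care that the mesh of Construction \ref{CM7} for input $C$ fits onto the one for input $\mathscr{A}(C)$, so that \emph{every} row --- in particular the middle row running through ${\rm M}_A$ --- carries the same $\tau_{\CH}$-period $4n$, i.e. one must pin down $\tau_{\CH}^{4}({\rm M}_A)\simeq{\rm M}_{\mathscr{A}(A)}$ by explicitly composing the four almost split sequences taken from \cite{HE}. A secondary nuisance is the type-checking in the last step needed to license Theorem \ref{almostpreserving} at $\tau_{\CH}({\rm Y})$, ensuring that neither the period drops nor a projective $\rm A$-module appears when passing from $\Gamma_{\CH}$ to $\Gamma_{\rm A}$.
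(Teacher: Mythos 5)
Your overall strategy is the paper's: realize $S$ as $\Theta(N\st{f}\rt M)$, where $0\rt\tau(M)\rt N\st{f}\rt M\rt 0$ is the almost split sequence giving the minimal projective resolution of $S$; show that $(N\st{f}\rt M)$ is $\tau_{\CH}$-periodic of period a multiple of $4$; and transfer this through $\Theta$ via Theorem \ref{almostpreserving}. The identification of the pd-$2$ simples and the final transfer step are fine (your type-checking of the orbit is in fact more careful than the paper's, which simply chains $\tau_{{\rm A}}^{4n}\Theta=\Theta\tau_{\CH}^{4n}$ along the orbit).

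The divergence, and the gap, is exactly where you flag it. You want $\tau_{\CH}^{4}({\rm M}_A)\simeq{\rm M}_{\mathscr{A}(A)}$ by gluing the \emph{middle} rows of consecutive copies of the mesh of Construction \ref{CM7}, but that construction only exhibits the middle row as far as $\tau_{\CH}^{2}({\rm M}_A)=\tau_{\CH}({\rm Y})$; identifying $\tau_{\CH}^{3}({\rm M}_A)$ and $\tau_{\CH}^{4}({\rm M}_A)$ requires two further almost split sequences that neither you nor Construction \ref{CM7} write down, so this step is asserted rather than proved (and your auxiliary claim that $\tau_{\CH}^{j}({\rm M}_A)$ is never of the form ${\rm M}_{M'}$ for $4\nmid j$ is likewise unjustified). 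The paper closes this without any new computation: Proposition \ref{4mperiodicityMorphism} already covers the type-$(b)$ vertex $(M\st{1}\rt M)$, since $\tau_{\CH}(0\rt\tau^{-1}(M))\simeq(M\st{1}\rt M)$ puts it in the $\tau_{\CH}$-orbit of a type-$(a)$ vertex, so it has period $4n$; and by Lemma \ref{LemmaBoundary}(2) the object $(N\st{f}\rt M)$ is precisely the middle term of the almost split sequence $0\rt(\tau(M)\rt 0)\rt(N\st{f}\rt M)\rt(M\st{1}\rt M)\rt 0$ in $\CH$. Applying $\tau_{\CH}^{4n}$ and using uniqueness of the almost split sequence ending at $(M\st{1}\rt M)$ forces $\tau_{\CH}^{4n}(N\st{f}\rt M)\simeq(N\st{f}\rt M)$. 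So either carry out the four-fold composition you postpone, or replace your second step by this neighbour argument; as written, the proof is incomplete at that point.
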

\begin{proof}
Recall that such simple ${\rm A}$-modules might be identified with simple functors $S_M=(-, M)/{\rad}(-, M)$ where $M$ is an indecomposable non-projective $\Lambda$-module lying in an almost split sequence $0 \rt \tau(M)\st{g}\rt N\st{f}\rt M\rt $ in $\mmod \La$. By Proposition \ref{4mperiodicityMorphism}, $(0\rt \tau^{-1}(M))$ is of $\tau_{\CH}$-periodicity $4n$ for a suitable integer $n$. Since $(0\rt \tau^{-1}(M))$ and $(M\st{1}\rt M)$ lie in the same $\tau_{\CH}$-orbit, it follows that  $( M\st{1}\rt M)$ is also of the same periodicity $4n$. Thus the irreducible morphism $(N\st{f}\rt M)\rt (M\st{1}\rt M)$ in $\Gamma_{\CH}$ remains fixed after $4n$ applications of $\tau_{\CH}$ and accordingly, $(N\st{f}\rt M)$ should be of $\tau_{\CH}$-periodicity $4n$. Consequently, according to Theorem \ref{almostpreserving}, $\tau^{4n}_{A}(S_M)=\tau^{4n}_{A}\Theta(N\st{f}\rt M)=\Theta\tau^{4n}_{\CH}(N\st{f}\rt M)=\Theta(N\st{f}\rt M)=S_M$.
\end{proof}

\vspace{.1 cm}




\subsection{Modules $M$ with $\tau(M)=\Omega(M)$}

\begin{definition}
Let $M$ be an indecomposable non-projective module.  We say $M$ has the property $(*)$ if $0 \rt \Omega(M)\rt P(M)\rt M\rt 0$ is an almost split sequence in $\mmod \La$ where $P(M)$ is the projective cover of $M$.
\end{definition}

Modules satisfying this property have already been classified in \cite[Theorem V.3.3]{AuslanreitenSmalo}:  these are exactly non-injective simple $\Lambda$-modules  $M$ that are not a composition factor of ${\rm rad}(I)/{\rm soc}(I)$ for every injective $\Lambda$-module $I$.
This clearly yields that such modules are necessarily $\mathscr{A}$-periodic.
Note also that in the situation of the definition, $\tau(M)=\Omega(M)$. The goal in this subsection is to see that existence of modules with this property may heavily affect the shape of the AR-quiver of ${\rm A}$ and in particular cases may even make ${\rm A}$ into an algebra of finite representation type. As a first pace to study modules with property $(*)$, the following lemma shows that this property carries over from a module to its (co)syzygies.

\begin{lemma}\label{Lemma 7.7}
Let $M$ be an indecomposable non-projective $\Lambda$-module. If $M$ has the property $(*)$, then so do all its syzygies (resp. cosyzygies).  In particular, the short exact sequences

$$ 0 \rt \Omega^{i+1}(M)\rt P^i\st{}\rt \Omega^i(M)\rt 0 \ \ \text{for} \ i\geq 0, \ \text{and}$$
$$0 \rt \Omega^{i}(M)\rt I^i \rt \Omega^{i-1}(M)\rt 0 \ \ \text{for} \ i\leq 0 $$
in $\mmod \La$ induced by the minimal projective (resp. injective) resolution of $M$ are almost split.
\end{lemma}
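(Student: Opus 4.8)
The plan is to reduce the entire statement to one assertion: \emph{property $(*)$ is preserved under the Heller operator $\Omega$ and its inverse.} Recall that, $\Lambda$ being self-injective, $\Omega$ and the cosyzygy functor $\Omega^{-1}$ are mutually inverse auto-equivalences of the stable category $\underline{\mmod}\Lambda$, that $\tau\cong\nu\Omega^2$ on $\underline{\mmod}\Lambda$, and that $\nu$ (being exact and carrying projectives to projectives) commutes with $\Omega$; hence $\tau$ commutes with $\Omega$ on $\underline{\mmod}\Lambda$. Granting the preservation, a trivial induction yields that $\Omega^i(M)$ has property $(*)$ for every $i\in\Z$; in particular each $\Omega^i(M)$ ($i>0$) is an indecomposable non-projective module with property $(*)$. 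The sequences in the statement are then recognized as almost split: for $i\ge 0$ the sequence $0\rt\Omega^{i+1}(M)\rt P^i\rt\Omega^i(M)\rt 0$ is, by the very \emph{definition} of property $(*)$ applied to $\Omega^i(M)$ (whose projective cover is $P^i$ and whose syzygy is $\Omega^{i+1}(M)$), an almost split sequence, and the case $i\le 0$ is handled at the end.

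\noindent The crux is the following reformulation: for $N$ indecomposable non-projective, $N$ has property $(*)$ if and only if $\tau(N)\cong\Omega(N)$ in $\underline{\mmod}\Lambda$ \emph{and} $\underline{\End}_\Lambda(N)$ is a division ring. If $0\rt\Omega(N)\rt P(N)\rt N\rt 0$ is almost split, then its left term is $\tau(N)$, so $\tau(N)\cong\Omega(N)$; moreover $P(N)\rt N$ is right almost split, so every non-isomorphism $N\rt N$ factors through the projective $P(N)$, whence every non-unit of $\End_\Lambda(N)$ vanishes in $\underline{\End}_\Lambda(N)$ and the latter (nonzero, as $N$ is non-projective) is a division ring. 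Conversely, if $\tau(N)\cong\Omega(N)$ and $\underline{\End}_\Lambda(N)$ is a division ring, then by the Auslander--Reiten formula $\Ext^1_\Lambda(N,\Omega(N))\cong\Ext^1_\Lambda(N,\tau(N))\cong D\underline{\End}_\Lambda(N)$ is one-dimensional over that division ring, so all non-split extensions of $N$ by $\Omega(N)$ are isomorphic; since $N$ is non-projective, the projective-cover sequence $0\rt\Omega(N)\rt P(N)\rt N\rt 0$ is such a non-split extension and therefore is the almost split sequence ending at $N$.

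\noindent With this reformulation, preservation is immediate: if $M$ has property $(*)$ then $\tau(\Omega(M))\cong\Omega(\tau(M))\cong\Omega(\Omega(M))$ and $\underline{\End}_\Lambda(\Omega(M))\cong\underline{\End}_\Lambda(M)$ (an auto-equivalence preserves endomorphism rings), so $\Omega(M)$ has property $(*)$; the same argument with $\Omega^{-1}$ in place of $\Omega$ settles $\Omega^{-1}(M)$, and induction gives the claim for all $\Omega^i(M)$. Finally, for the cosyzygy sequences $0\rt\Omega^i(M)\rt I^i\rt\Omega^{i-1}(M)\rt 0$ with $i\le 0$: by the symmetric version of the reformulation (equivalently, dualizing the previous paragraph over $\Lambda^{\mathrm{op}}$, again self-injective), the conditions $\tau(\Omega^i(M))\cong\Omega(\Omega^i(M))$ and $\underline{\End}_\Lambda(\Omega^i(M))$ a division ring likewise force the minimal injective copresentation $0\rt\Omega^i(M)\rt I(\Omega^i(M))\rt\Omega^{-1}(\Omega^i(M))\rt 0$ of $\Omega^i(M)$ to be almost split; as $I^i=I(\Omega^i(M))$ and $\Omega^{i-1}(M)=\Omega^{-1}(\Omega^i(M))$, this is exactly the asserted sequence.

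\noindent The main obstacle is the reformulation lemma itself: one must pass from the short-exact-sequence condition defining property $(*)$ to a condition phrased entirely inside $\underline{\mmod}\Lambda$ (hence manifestly $\Omega$-stable), and the bare isomorphism $\tau(N)\cong\Omega(N)$ does not suffice --- one needs in addition that $\underline{\End}_\Lambda(N)$ is a division ring and the fact that over a self-injective algebra the relevant $\Ext^1$ is one-dimensional over this division ring, so that the almost split extension is pinned down up to isomorphism. (Invoking instead the classification of property-$(*)$ modules recalled above does not help directly, since that description is not visibly stable under $\Omega$.)
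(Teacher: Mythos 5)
Your proof is correct, but it takes a genuinely different route from the paper's. The paper argues by induction on $i$: assuming $0\rt\Omega^{i}(M)\rt P^{i-1}\rt\Omega^{i-1}(M)\rt 0$ is almost split, it looks at the almost split sequence $0\rt\tau\Omega^i(M)\rt B\rt\Omega^i(M)\rt 0$ and rules out a non-projective indecomposable summand $C$ of $B$ by noting that the irreducible map $C\rt\Omega^i(M)$ would force the non-projective module $\tau^{-1}(C)$ to be a summand of the middle term $P^{i-1}$ of the almost split sequence starting at $\Omega^i(M)$; minimality then identifies $B$ with the projective cover $P^i$ and $\tau\Omega^i(M)$ with $\Omega^{i+1}(M)$. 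You instead reformulate property $(*)$ intrinsically in $\underline{\mmod}\La$ — as $\tau(N)\simeq\Omega(N)$ together with $\underline{\End}_\La(N)$ being a division ring — via the Auslander--Reiten formula and the fact that $\Ext^1_\La(N,\tau N)\simeq D\underline{\End}_\La(N)$ is then one-dimensional over that division ring, so that every non-split extension (in particular the projective-cover sequence) is almost split; invariance under the stable auto-equivalences $\Omega^{\pm1}$, which commute with $\tau$ over a self-injective algebra, then propagates the property in both directions. Your approach costs the AR formula and the socle characterization of almost split sequences, but it buys a conceptual explanation of why $(*)$ propagates, a uniform treatment of syzygies and cosyzygies (the paper only writes out $i\ge 0$ and leaves the dual implicit), and, as a byproduct, a clean characterization of property-$(*)$ modules. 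Both arguments use self-injectivity of $\La$ essentially (the paper to ensure $\tau^{-1}(C)$ is non-projective, you to make $\Omega$ a stable equivalence commuting with $\tau$ and to identify $\overline{\End}$ with $\underline{\End}$), which is a standing hypothesis of the section.
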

\begin{proof}
We prove the lemma for integers $i\geq 0$ by using an inductive argument whose basis $i=0$ is satisfied by the assumption; so we put $i> 0$. Consider  the almost split sequence $0 \rt \tau\Omega^i(M)\rt B\rt \Omega^i(M)\rt 0$ in $ \mmod \La$. We claim that $B$ is projective. Assume to the contrary that $B$ has a non-projective indecomposable direct summand $C$. The induction hypothesis then implies that  $\tau^{-1}(C)$ is a non-projective direct summand of $P^{i-1}$, which is absurd. Now use the fact that the morphisms involved in an almost split sequence are minimal to deduce that	 $\tau\Omega^i(M)=\Omega^{i+1}(M)$.
\end{proof}


The following lemma shows a property of the modules $M$ for which $(*)$ is satisfied; this will be used later on in this section.

\begin{lemma}\label{Lemma-nu}
Under the hypothesis of Lemma \ref{Lemma 7.7}, one has $\nu(P^{i+1})\simeq P^i$ for $i\geq 0$ and $\nu^{-1} (I^{i-1}) \simeq I^i$ for $i\leq 0$.
\end{lemma}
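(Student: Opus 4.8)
The plan is to derive both statements from the behaviour of the Nakayama functor on a self-injective algebra. Since $\Lambda$ is self-injective, $\nu=D\Hom_\Lambda(-,\Lambda)$ is an exact auto-equivalence of $\mmod\Lambda$ that preserves projectives as well as right-minimal epimorphisms, hence carries projective covers to projective covers; dually, $\nu^{-1}$ is an exact auto-equivalence preserving injective envelopes. Consequently $\nu$ commutes with syzygies and sends a minimal projective resolution of any module $N$ to one of $\nu(N)$, so that $\nu\Omega^{j}(N)\simeq\Omega^{j}\nu(N)$ and the $j$-th term of a minimal projective resolution of $\nu(N)$ is $\nu$ applied to the $j$-th term of one of $N$, for every $j\ge0$; symmetrically for $\nu^{-1}$, cosyzygies and minimal injective resolutions.

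First I would identify the module $\nu(M)$ up to isomorphism. By Lemma~\ref{Lemma 7.7} the sequence $0\rt\Omega^{i+1}(M)\rt P^{i}\rt\Omega^{i}(M)\rt0$ is almost split, so $\tau\Omega^{i}(M)\simeq\Omega^{i+1}(M)$; combining this with the classical identification $\tau\simeq\nu\Omega^{2}\simeq\Omega^{2}\nu$ valid over a self-injective algebra, and with $\nu\Omega^{i}\simeq\Omega^{i}\nu$ from the first paragraph, yields $\Omega^{i+1}(M)\simeq\Omega^{i+2}\nu(M)$ in $\underline{\mmod}\Lambda$. As $\Omega$ is an auto-equivalence of $\underline{\mmod}\Lambda$ we may cancel $\Omega^{i+1}$ to get $M\simeq\Omega\nu(M)$, i.e. $\nu(M)\simeq\Omega^{-1}(M)$ in $\underline{\mmod}\Lambda$; since $\nu(M)$ and $\Omega^{-1}(M)$ are both indecomposable and non-projective, this is an honest isomorphism $\nu(M)\simeq\Omega^{-1}(M)$ of $\Lambda$-modules. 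The same argument, run with the cosyzygy sequences of Lemma~\ref{Lemma 7.7} and the formula $\tau^{-1}\simeq\Omega^{-2}\nu^{-1}$, gives $\nu^{-1}(M)\simeq\Omega(M)$.

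Then I would read off the conclusion by comparing two descriptions of the minimal projective resolution of $\nu(M)\simeq\Omega^{-1}(M)$. By the first paragraph its $(i+1)$-th term is $\nu(P^{i+1})$; on the other hand, since $\Omega^{j}(\Omega^{-1}(M))\simeq\Omega^{j-1}(M)$ for $j\ge1$ — again an honest isomorphism of indecomposable non-projective modules — the $j$-th term of a minimal projective resolution of $\Omega^{-1}(M)$ is the projective cover of $\Omega^{j-1}(M)$, namely $P^{j-1}$. Taking $j=i+1$ gives $\nu(P^{i+1})\simeq P^{i}$ for $i\ge0$. Dually, comparing the two descriptions of the minimal injective resolution of $\nu^{-1}(M)\simeq\Omega(M)$ and using $\Omega^{-j}(\Omega(M))\simeq\Omega^{1-j}(M)$ yields $\nu^{-1}(I^{i-1})\simeq I^{i}$ for $i\le0$.

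No deep obstacle arises; the proof is essentially bookkeeping with syzygies and the Nakayama functor. The one point requiring care is the upgrade of the stable isomorphism $M\simeq\Omega\nu(M)$ to a genuine module isomorphism $\nu(M)\simeq\Omega^{-1}(M)$: this is legitimate precisely because all modules involved are indecomposable and non-projective, so that the canonical functor $\mmod\Lambda\rt\underline{\mmod}\Lambda$ is injective on their isomorphism classes, and because over a self-injective algebra syzygies and cosyzygies of indecomposable non-projective modules stay indecomposable and non-projective. One should also be sure to invoke the standard identity $\tau\simeq\nu\Omega^{2}$ on $\underline{\mmod}\Lambda$ for self-injective $\Lambda$, together with the fact that $\nu$ is exact and preserves projective covers.
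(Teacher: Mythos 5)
Your proof is correct, but it takes a genuinely different route from the paper's. The paper works directly with the four-term exact sequence $0 \to \tau(M) \to \nu(P^1) \to \nu(P^0) \to \nu(M) \to 0$ attached to the minimal projective presentation: since $\nu$ is an auto-equivalence of $\mmod\La$ (so minimality is preserved) and $\nu(P^1)$ is injective, the map $\tau(M)\to\nu(P^1)$ is an injective envelope; on the other hand, property $(*)$ makes the inclusion $\tau(M)=\Omega(M)\hookrightarrow P^0$ a left minimal monomorphism into an injective, hence also an injective envelope. Uniqueness of injective envelopes then gives $\nu(P^1)\simeq P^0$, and Lemma \ref{Lemma 7.7} drives the induction. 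You instead first pin down the intermediate isomorphism $\nu(M)\simeq\Omega^{-1}(M)$ via the identity $\tau\simeq\nu\Omega^2\simeq\Omega^2\nu$ and cancellation in $\underline{\mmod}\La$, and then compare two descriptions of the minimal projective resolution of $\nu(M)$. Both arguments are sound. The paper's is more self-contained, needing only the defining sequence of $\tau$ and uniqueness of injective envelopes; yours imports the formula $\tau\simeq\nu\Omega^2$ and the stable-to-honest upgrade for indecomposable non-projectives, but in exchange isolates the cleaner statement $\nu(M)\simeq\Omega^{-1}(M)$, which delivers all terms of the resolution at once and dualizes verbatim to the injective side. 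The one step in your version that genuinely requires care --- and which you correctly flag --- is that $\Omega(\Omega^{-1}(M))\simeq M$ as modules, i.e.\ that the injective envelope $I^0\twoheadrightarrow\Omega^{-1}(M)$ is simultaneously a projective cover; this is the standard fact that $\Omega$ and $\Omega^{-1}$ are mutually inverse on indecomposable non-projective modules over a self-injective algebra.
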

\begin{proof}
We prove the first assertion. The minimal projective presentation $P^1\rt P^0\rt M \rt 0$ induces the short exact sequence
$0 \rt \tau(M)\rt \nu(P^1)\rt \nu(P^0) \rt \nu(M)\rt 0$ in $\mmod\Lambda$. Note that, as $\nu$ is an auto-equivalence of $\mmod\La$, the map $\tau(M)\rt \nu(P^1)$ is minimal and thus defines the injective envelope of $\tau(M)$ as $\nu(P^1)$ is injective.
However, by definition, the monomorphism $\tau(M)\rt P^0$ obtained by composing the isomorphism $\tau(M)\simeq\Omega(M)$ and the inclusion $\Omega(M)\rt P^0$ is also minimal with $P^0$ injective. Therefore $\nu(P^1)\simeq P^0$ since the injective envelope is unique up to isomorphism. Now we deduce the result by applying an inductive argument in conjunction with Lemma \ref{Lemma 7.7}.
\end{proof}



We also need the following lemma which is probably well-known; nonetheless we sketch a proof for the sake of completeness. 

\begin{lemma}\label{Lemma-proj-inj}
    Let $F$ be an object in $\mmod(\mmod \La)$. Then $F$ is projective-injective if and only if $F\simeq (-, I)$, for some injective $\Lambda$-module $I$.
\end{lemma}
\begin{proof}
Set $F\simeq(-, I)$ for an injective $\Lambda$-module $I$. We only need to show that $\Ext^1(G, (-, I))=0$ for an arbitrary $G\in\mmod(\mmod\Lambda)$. Since the global dimension of $\mmod(\mmod\Lambda)$ is at most $2$, one may take a projective resolution 
$0\rt(-, C)\st{(-,f)}\longrightarrow (-,B)\st{(-, g)}\longrightarrow (-,A)\longrightarrow G\rt 0$ of $G$ which is, according to Yoneda's Lemma, induced by an exact sequence $0\rt C\st{f}\rt B\st{g}\rt A$ of $\Lambda$-modules. Now take a $\Lambda$-map $h:B\rt I$ such that $hf=0$. This induces a map $\im(g)\rt I$ which extends to a $\Lambda$-map $\bar{h}:A\rt I$ with $h=\bar{h}g$ due to injectivity of $I$. This shows that applying $(-, I)$ on the aforementioned projective resolution leaves it exact at $(-, B)$, whence the result.  

\vspace{.05 cm}
If, conversely, $F$ is projetive-injective, then $F\simeq (-, I)$ for some $\Lambda$-module $I$. To see that $I$ is injective, one takes a short exact sequence $0\rt C\rt B\rt A\rt 0$ of $\Lambda$-modules and uses arguments like those applied above, along with Yoneda's Lemma to deduce that $F$ leaves the latter sequence exact. 
\end{proof}

The following theorem is the promised one.

\begin{theorem} \label{Pstar}
Assume there exists an indecomposable non-projective $\Lambda$-module $M$ with the property $(*)$. Then the full subquiver of the Auslander-Reiten quiver $\Gamma_{{\rm A}}$ of ${\rm A}$ which is obtained by removing projective-injective vertices, is a finite oriented cycle. In particular, the Auslander algebra ${\rm A}$ is of finite representation type.
\end{theorem}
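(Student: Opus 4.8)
The plan is to exploit the structural information encoded in Construction \ref{CM7} together with Lemma \ref{Lemma 7.7} and Lemma \ref{Lemma-nu}. Fix an indecomposable non-projective $\Lambda$-module $M$ with property $(*)$; recall that then $M$ is a non-injective simple module, that $\tau\Omega^i(M)=\Omega^{i+1}(M)$ for every $i$, and that all the short exact sequences extracted from the minimal projective/injective resolution of $M$ are almost split. Since $\Lambda$ is self-injective of finite representation type, $\Omega$ is a self-equivalence of $\underline{\mmod}\Lambda$ with $\Omega$-periodic objects, so the $\Omega$-orbit $\{\Omega^i(M)\}_{i\in\Z}$ is finite; say $\Omega^d(M)\simeq M$. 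First I would observe that, because every almost split sequence ending at some $\Omega^i(M)$ has projective middle term $P^{i-1}$ (by Lemma \ref{Lemma 7.7}), the component of $\Gamma_\Lambda$ containing $M$ consists precisely of the $d$ modules $M,\Omega^{-1}(M),\dots,\Omega^{-(d-1)}(M)$, arranged in an oriented $d$-cycle via the irreducible maps $\Omega^{i}(M)\rt P^{i-1}\rt\Omega^{i-1}(M)$ — in fact the almost split sequences degenerate so that each such module has exactly one predecessor and one successor in the stable quiver.

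Next I would transport this picture to $\CH$ and then to $\mmod{\rm A}$. Using Lemma \ref{LemmaBoundary}, Lemma \ref{almosSplittwo}, and the meshes assembled in Construction \ref{CM7}, the $\tau_\CH$-orbits of the objects of types $(a)$–$(d)$ built from $M$ and its syzygies can all be listed explicitly; crucially, because the relevant middle terms in $\mmod\Lambda$ are projective, the corresponding objects $(P^i\st{h}\rt P^{i-1})$ of type $(d)$, together with $(0\rt\Omega^i(M))$, $(\Omega^i(M)\st{1}\rt\Omega^i(M))$ and $(\Omega^i(M)\rt 0)$, form a single finite $\tau_\CH$-orbit (one checks, via Lemma \ref{Lemma-nu}, that $\nu(P^{i+1})\simeq P^i$, so the potentially troublesome $\nu$-twist in the meshes of Construction \ref{CM7} collapses). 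Here the key point is that \emph{no} indecomposable object of $\CH$ outside this list occurs: every indecomposable of $\CH$ is either of one of the four types — which, by the above, all come from the single component of $\Gamma_\Lambda$ through $M$, hence from this finite orbit — or lies in a $\tau_\CH$-orbit meeting one of these, and the mesh relations show these exhaust $\Gamma_\CH$. I would then invoke Theorem \ref{Theorem-morphism-functor} and Theorem \ref{almostpreserving}: $\Gamma_{\rm A}$ is obtained from $\Gamma_\CH$ by deleting the vertices of types $(b)$ and $(c)$, and $\Theta$ carries the surviving almost split sequences to almost split sequences. Since what remains is a finite set of vertices joined in a single oriented cycle by the images of the irreducible maps, $\Gamma_{\rm A}$ is a finite oriented cycle, and in particular ${\rm A}$ is representation-finite.

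The main obstacle, and the step I would spend the most care on, is the claim that the objects enumerated above genuinely exhaust the indecomposables of $\CH$ — equivalently, that the component of $\Gamma_\Lambda$ through $M$ is all of $\Gamma_\Lambda$, so that $\Lambda$ itself is forced to be "small''. This needs the fact that property $(*)$ for $M$, via Lemma \ref{Lemma 7.7} applied to all syzygies and cosyzygies, makes the stable component of $M$ a finite tube of rank one (an oriented cycle), and then a connectedness argument for $\Gamma_\Lambda$ (valid since $\Lambda$ is connected self-injective and representation-finite, so $\Gamma_\Lambda$ is connected) to conclude that this component is everything. Once that is in hand, the passage $\Gamma_\Lambda\leadsto\Gamma_\CH\leadsto\Gamma_{\rm A}$ is bookkeeping with Construction \ref{CM7}, Lemma \ref{Lemma-nu}, Theorem \ref{Theorem-morphism-functor} and Theorem \ref{almostpreserving}. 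A secondary technical point is to make sure the $\tau_\CH$-orbit computations in Construction \ref{CM7} close up correctly modulo the identifications $\Omega^d(M)\simeq M$ and $\nu(P^{i+1})\simeq P^i$, i.e.\ that the cycle one gets has no hidden identifications or extra branches.
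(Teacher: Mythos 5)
Your overall plan --- assemble the meshes of Construction \ref{CM7} from the almost split sequences supplied by Lemma \ref{Lemma 7.7}, use Lemma \ref{Lemma-nu} to collapse the $\nu$-twist, delete the vertices of types $(b)$ and $(c)$, and read off a finite oriented cycle --- matches the paper, and that part is sound. The genuine gap is in the step you yourself flag as the main obstacle, and your proposed fix does not close it. You reduce ``the listed objects exhaust the indecomposables of $\CH$'' to ``the component of $\Gamma_\Lambda$ through $M$ is all of $\Gamma_\Lambda$'', and then settle the latter by connectedness of $\Gamma_\Lambda$. But even granting that every indecomposable $\Lambda$-module is one of the $\Omega^i(M)$ or $P^i$, this says nothing about the indecomposable objects of $\CH\simeq\mmod T_2(\Lambda)$: the morphism category of a representation-finite algebra generally has far more indecomposables than the objects of types $(a)$--$(d)$, and $T_2(\Lambda)$ can even be representation-infinite while $\Lambda$ is representation-finite (this is precisely the subject of \cite{AR76}). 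Your supporting claim that every indecomposable of $\CH$ lies in a $\tau_{\CH}$-orbit meeting one of the four types is exactly what would need proof and is not available: Remark \ref{RMark-Frob} only gives $\tau_{\CH}(\CX)\subseteq\CX$, i.e.\ the orbit of an object of type $(a)$--$(d)$ stays inside $\CX$, not that every orbit meets $\CX$.

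The paper resolves this in the opposite direction: it never attempts to exhaust $\Gamma_\Lambda$ or $\Gamma_{\CH}$ a priori. It computes the single component of $\Gamma_{\CH}$ containing $(0\rt M)$ --- legitimate because each mesh there is completely determined by the almost split sequences of \cite{HE} together with Lemmas \ref{Lemma 7.7} and \ref{Lemma-nu}, the middle terms being projective --- observes that after deleting types $(b)$ and $(c)$ one obtains a \emph{finite} component of $\Gamma_{\rm A}$ (an oriented cycle, closed up by the $\mathscr{A}$-periodicity of $M$), and only then invokes \cite[Theorem VII.2.1]{AuslanreitenSmalo} \emph{for the algebra ${\rm A}$}: a finite connected component of the Auslander--Reiten quiver of a connected Artin algebra is the whole quiver. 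The exhaustion of the indecomposables of $\mmod{\rm A}$ (hence of $\CH$, up to objects of types $(b)$ and $(c)$) is a \emph{consequence} of that theorem, not an input. If you wish to keep your architecture, you must apply the finite-component theorem to ${\rm A}$ (or to $T_2(\Lambda)$) rather than to $\Lambda$; as written, the passage from $\Gamma_\Lambda$ to $\Gamma_{\CH}$ is not bookkeeping but the missing heart of the proof.
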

\begin{proof}
The minimal projective presentation
\[ \cdots \rt P^{n}\st{w_{n}}\rt P^{n-1}\rt \cdots P^1\st{w_1}\rt P^0\rt M\rt 0\]
of $M$ induces, according to Lemma \ref{Lemma 7.7}, the almost split sequences
\[\epsilon_i:  0 \rt \Omega^{i+1}(M)\st{v_i}\rt P^i \st{u_i}\rt \Omega^i(M)\rt 0\]
in $\mmod\Lambda$. Applying  Lemma \ref{almosSplittwo} on $\epsilon_0$ and $\epsilon_1$ gives the almost split sequence

\[\xymatrix@1{  0\ar[r] & {\left(\begin{smallmatrix} P^1\\ \Omega(M)\end{smallmatrix}\right)}_{u_1}
			\ar[rr]
			& & {\left(\begin{smallmatrix} \Omega(M)\\ \Omega(M)\end{smallmatrix}\right)}_{1} \oplus {\left(\begin{smallmatrix} P^1\\ P^0\end{smallmatrix}\right)}_{w_1} \ar[rr]& &
			{\left(\begin{smallmatrix}\Omega(M)\\ P^0\end{smallmatrix}\right)}_{v_0}\ar[r]& 0. } \]
Note that, by Lemma \ref{LemmaBoundary}, $\tau_{\mathcal{H}}({\left(\begin{smallmatrix} 0\\ M\end{smallmatrix}\right)})={\left(\begin{smallmatrix} \Omega(M)\\ \Omega(M)\end{smallmatrix}\right)}_{1}$ and $\tau_{\mathcal{H}}\Big({\left(\begin{smallmatrix} \Omega(M)\\ \Omega(M)\end{smallmatrix}\right)}_{1}\Big)={\left(\begin{smallmatrix} \Omega^2(M)\\ 0\end{smallmatrix}\right)}$.
Moreover, in light of \cite[Proposition 2.4]{HE}, we get $\tau_{\CH}(P^1\st{w_1}\rt P^0)=(0 \rt \Omega(M))$ and so there exists an almost split sequence

 \[\xymatrix@1{  0\ar[r] & {\left(\begin{smallmatrix} 0\\ \Omega(M)\end{smallmatrix}\right)}_{0}
			\ar[rr]
			& & {\left(\begin{smallmatrix} P^1\\ \Omega(M)\end{smallmatrix}\right)}_{u_1} \oplus {\left(\begin{smallmatrix} 0\\ P^0\end{smallmatrix}\right)}_{0} \ar[rr]& &
			{\left(\begin{smallmatrix}P^1\\ P^0\end{smallmatrix}\right)}_{w_1}\ar[r]& 0. }\]
Furthermore, an application of \cite[Theorem 3.5]{HE} provides us with another almost split sequence

\[\xymatrix@1{  0\ar[r] & {\left(\begin{smallmatrix} \Omega^2(M)\\ P^1 \end{smallmatrix}\right)}_{v_1}
			\ar[rr]
			& & {\left(\begin{smallmatrix} 0\\ \Omega(M)\end{smallmatrix}\right)}_{0} \oplus {\left(\begin{smallmatrix} P^1\\ P^1\end{smallmatrix}\right)}_{1}\oplus {\left(\begin{smallmatrix} \Omega^2(M)\\ 0\end{smallmatrix}\right)}_{0} \ar[rr]& &
			{\left(\begin{smallmatrix}P^1\\ \Omega(M)\end{smallmatrix}\right)}_{u_1}\ar[r]& 0. }\]
Also  \cite[Proposition 2.2]{HE} combined to Lemma \ref{Lemma-nu} results in the almost split sequence

\[ \xymatrix@1{  0\ar[r] & {\left(\begin{smallmatrix} \nu(P^3)\\ \nu(P^2)\end{smallmatrix}\right)}_{\nu(w_3)}
			\ar[rr]
			& & {\left(\begin{smallmatrix} \Omega^2(M)\\ P^1\end{smallmatrix}\right)}_{v_1} \oplus {\left(\begin{smallmatrix} \nu(P^3)  \\ 0 \end{smallmatrix}\right)}_{0} \ar[rr]& &
			{\left(\begin{smallmatrix}\Omega^2(M)\\ 0\end{smallmatrix}\right)}_{0}\ar[r]& 0. }\]
It is easy to see that, as $\Omega^2(M)$ satisfies $(*)$, so does $\nu\Omega^2(M)$ and consequently, $\tau \nu\Omega^2(M)=\Omega\nu\Omega^2(M)\simeq \nu\Omega^3(M)=\mathscr{A}(M)$. Therefore, by \cite[Proposition 2.4]{HE}, we have $\tau_{\CH}(\nu(P^3) \st{\nu(w_3)}\rt \nu(P^2))=(0 \rt \mathscr{A}(M))$. Continuing in this manner, one obtains the following mesh in the Auslander-Reiten quiver of $\mathcal{H}$.

\label{DigaramSpices}
 		{\tiny \[\xymatrix@-7mm{&&&&& {\left(\begin{smallmatrix}0\\ P^0 \end{smallmatrix}\right)}_{0}\ar[dr]&&&	&\\
 	&&{\left(\begin{smallmatrix}\Omega^2(M)\\ \Omega^2(M)\end{smallmatrix}\right)}_{1}\ar[dr]	&&{\left(\begin{smallmatrix}0\\ \Omega(M)\end{smallmatrix}\right)}_{0}\ar[ur]\ar[dr]\ar@{.>}[ll] && {{\left(\begin{smallmatrix} P^1\\ P^0\end{smallmatrix}\right)}_{w_1}}\ar[dr]\ar@{.>}[ll]&& \\
&{\left(\begin{smallmatrix}P^2 \\ \Omega^2(M)\end{smallmatrix}\right)}_{u_2}\ar[ur]\ar[dr]\ar[r]& {\left(\begin{smallmatrix} \nu(P^3)\\ \nu (P^3)\end{smallmatrix}\right)}_{1}\ar[r]&{\left(\begin{smallmatrix}\Omega^2(M) \\  P^1\end{smallmatrix}\right)}_{v_1}\ar[r]\ar[ur]\ar[dr]&{\left(\begin{smallmatrix} P^1\\ P^1\end{smallmatrix}\right)}_{1}\ar[r]& {{\left(\begin{smallmatrix} P^1\\ \Omega(M) \end{smallmatrix}\right)}_{u_1}}\ar[dr]\ar[ur]&& {{\left(\begin{smallmatrix} \Omega(M)\\ P^0\end{smallmatrix}\right)}_{v_0}} \ar[dr]\ar@{.>}[ll]&\\
	{{\left(\begin{smallmatrix} 0\\ \mathscr{A}(M) \end{smallmatrix}\right)}_{0}}\ar[ur]&&{{\left(\begin{smallmatrix} \nu(P^3)\\ \nu (P^2)\end{smallmatrix}\right)}_{\nu(w_3)}}\ar[ru]\ar[dr]\ar@{.>}[ll]&&	{{\left(\begin{smallmatrix} \Omega^2(M)\\ 0\end{smallmatrix}\right)}_{0}}\ar[ur]\ar@{.>}[ll]&& {{\left(\begin{smallmatrix} \Omega(M)\\ \Omega(M)\end{smallmatrix}\right)}_{1}} \ar[ur]\ar@{.>}[ll]&&\ar@{.>}[ll]{{\left(\begin{smallmatrix} 0\\ M\end{smallmatrix}\right)}_{0}}\\&&&{\left(\begin{smallmatrix} \nu(P^3)  \\ 0 \end{smallmatrix}\right)}_{0}\ar[ru]&&&&&	}	
	\]}

Starting then with the vertex $(0 \rt \mathscr{A}(M))$ and iterating the above arguments, one may calculate the vertices lying on the left side of $(0\rt M)$ in $\Gamma_{\CH}$. Also, by considering the minimal injective resolution of $M$, the vertices on the right part appear. Summarizing, it follows that the component in $\Gamma_{\CH}$ containing  the vertex $(0 \rt M)$ is obtained by putting together all parts of the above shape corresponding to modules in the $\mathscr{A}$-orbit of $M$. By virtue of  previous considerations, $\Gamma_{{\rm A}}$ comes up from $\Gamma_{\CH}$ by removing  vertices of types $(b)$ and $(c)$. On the other hand,  $(0\rt P^0)$ corresponds under the equivalence  $\frac{\CH}{\CV}\simeq \mmod(\mmod\Lambda)$  to the functor $(-, P^0)$ which is , by Lemma \ref{Lemma-proj-inj}, the only projective-injective vertex in the above mesh. Hence, after removing  projective-injecive vertices, we observe that the remaining full subquiver of the AR-quiver $\Gamma_{{\rm A}}$ is obtained by gluing together all pieces of the following shape.
	
 	{\tiny \[\xymatrix@-7mm{&&&&& &&&	&\\
 	&&	&&{\left(\begin{smallmatrix}0\\ \Omega(M)\end{smallmatrix}\right)}_{0}\ar[dr] && {{\left(\begin{smallmatrix} P^1\\ P^0\end{smallmatrix}\right)}_{w_1}}\ar[dr]\ar@{.>}[ll]&& \\
&{\left(\begin{smallmatrix}P^2 \\ \Omega^2(M)\end{smallmatrix}\right)}_{u_2}\ar[dr]& &{\left(\begin{smallmatrix}\Omega^2(M) \\  P^1\end{smallmatrix}\right)}_{v_1}\ar[ur]\ar@{.>}[ll]&& {{\left(\begin{smallmatrix} P^1\\ \Omega(M) \end{smallmatrix}\right)}_{u_1}}\ar[ur]\ar@{.>}[ll]&& {{\left(\begin{smallmatrix} \Omega(M)\\ P^0\end{smallmatrix}\right)}_{v_0}} \ar[dr]\ar@{.>}[ll]&\\
	{{\left(\begin{smallmatrix} 0\\ \mathscr{A}(M) \end{smallmatrix}\right)}_{0}}\ar[ur]&&{{\left(\begin{smallmatrix} \nu(P^3)\\ \nu (P^2)\end{smallmatrix}\right)}_{\nu(w_3)}}\ar[ru]\ar@{.>}[ll]&&	&&  &&{{\left(\begin{smallmatrix} 0\\ M\end{smallmatrix}\right)}_{0}}	}	
	\]}	
Now, since $M$ is $\mathscr{A}$-periodic, we just get a finite oriented cycle, as required. That ${\rm A}$ is of finite representation type follows from \cite[Theorem VII.2.1]{AuslanreitenSmalo}.
\end{proof}

\subsection{Components of the stable Auslander-Reiten quiver of {\rm A}}
We let $\Gamma^s_{\rm A}$, the stable Auslander-Reiten quiver of ${\rm A}$, be the subquiver of $\Gamma_{\rm A}$ obtained by removing projective vertices and their $\tau_{\rm A}$-orbits. It should be clarified that here, we distinguish with a usual custom in the corresponding literature where this terminology applies while removing vertices that are both projective and injective. Also we notice that, generally, this has nothing to do with the Auslander-Reiten quiver of the stable Auslander algebra $\underline{\rm A}$. For instance, despite $\underline{{\rm A}}$ which is self-injective in this case, ${\rm A}$ can not be self-injective since it is of global dimension $2$.

\vspace{.05 cm}

Below, we use results from \cite{AS93} to get a nice intuition of the stable Auslander-Reiten quiver $\Gamma_{\rm A}^s$ of ${\rm A}$ in terms of the AR quiver of a triangulated category.

\begin{remark}\label{RMark-Frob}
Recall from Section  $3$ that the class $\CX$ in $\CH$ consisting of all objects of type $(a), (b), (c)$, or $(d)$ determines an exact structure ${\bf E3}=\CH_{\CX} $ on $\CH$ which has enough projectives and enough injectives. We claim that $\CP(\CH_{\CX})=\CX\cup {\rm proj}\mbox{-}\CH$ and  $\CI(\CH_{\CX})=\tau_{\CH}(\CX)\cup {\rm inj}\mbox{-}\CH$, the subcategories of projectives and injectives of $\CH_{\CX}$, coincide. Indeed, as in Construction \ref{CM7}, $\tau_{\mathcal{H}}(\mathcal{X})\subseteq\mathcal{X}$. Since every indecomposable injective object in $\CH$ is of type $(b)$ or $(c)$, we get $\CI(\CH_{\CX})\subseteq \CP(\CH_{\CX})$. To settle the reverse inclusion, note that  by \cite[Proposition 3.6]{HE}, for an indecomposable projective $\Lambda$-module $P$ there exists a projective $\Lambda$-module $Q$ with $\tau_{\CH}(Q \rt 0)=(0\rt P)$. Besides that projective objects of type $(P\st{1}\rt P)$ lie in ${\rm inj}\mbox{-}\CH$, this ensures that ${\rm proj}\mbox{-}\CH\subseteq \CI(\CH_{\CX})$. 

So, to get $\CP(\CH_{\CX}) \subseteq \CI_{\CX}(\CH_{\CX})$, we need only show $\CX \subseteq \CI(\CH_{\CX}) $. As $\CI(\CH_{\CX})$ contains all injectives, it suffices to show that every non-injective object in $\CX$ lies inside $\CI(\CH_{\CX})$.
Take an indecomposable non-injective object ${\rm M}$ of $\CX$. If ${\rm M}$ is of type $(a)$, then ${\rm M}=(0\rt M)$ for an indecomposable $\Lambda$-module $M$. If $M$ is projective then as above,  ${\rm M}=\tau_\CH(Q\rt 0)$ for some $Q$; otherwise $M$ is non-injective and Proposition $2.4$ of \cite{HE} shows that ${\rm M}=\tau_\CH(P_1\rt P_0)$ where $P_1\rt P_0\rt\tau^{-1}(M)\rt 0$  is the minimal projective presentation. If ${\rm M}=(M\st{1}\rt M)$ is of type $(b)$ with $M$ non-injective, then Lemma \ref{LemmaBoundary} gives ${\rm M}\simeq\tau_\CH(0\rt\tau^{-1}(M))$. Furthermore, if ${\rm M}=(M\rt 0)$ is of type $(c)$, then again Lemma \ref{LemmaBoundary} shows that ${\rm M}\simeq\tau_\CH(\tau^{-1}(M)\st{1}\rt \tau^{-1}(M))$ as $M$ is non-injective. Finally, if ${\rm M}=(P\st{f}\rt Q)$ is of type $(d)$ then, setting $N=\Coker(\nu(f))$,  we deduce from \cite[Proposition 2.2]{HE} that ${\rm M}=\tau_\CH(N\rt 0)$. Summarizing, these imply that $\CX\subseteq\CI(\CH_{\CX})$ and the above claim follows; that is to say, $\CH_{\CX}$ is a Frobenius exact category and, consequently, the stable category $\underline{\CH}_{\CX}$ is  triangulated.

\vspace{.05 cm}

On the other hand, according to \cite[Proposition 1.9]{AS93},  we infer that an almost split sequence $0 \rt {\rm X}\rt {\rm Y}\rt {\rm Z}\rt 0$ in $\CH$ is an almost split sequence in $\CH_{\CX}$ if and only if neither ${\rm X} \in \CI({\CH}_{\CX})$ nor ${\rm Z} \in \CP({\CH}_{\CX})$. Thus, in order to get the Auslander-Reiten quiver of the triangulated category $\underline{\CH}_{\CX}$, it is enough to remove the iso-classes of indecomposable objects in $\CX$  and arrows attached to them from the Auslander-Reiten quiver $\Gamma_{\CH}$ of $\CH$. But, as stated before,  what remains after deleting, is exactly the stable Auslander-Reiten quiver $\Gamma^s_{A}$ of ${\rm A}$.
\end{remark}

The following theorem is the main result in this subsection. Note that if $\Lambda$ admits a module $M$ with property $(*)$ then, according to Theorem \ref{Pstar}, $\Gamma^s_{{\rm A}}$ is just a set of single vertices. That's why one has to exclude this case from the hypothesis below.

\begin{theorem}\label{prop Xi}
Assume $\La$ is indecomposable self-injective  of finite representation type and $\Xi$ is a component of $\Gamma^s_{\rm A}$ containing a simple module $S_M$ for an indecomposable non-projective $\Lambda$-module $M$ not fulfilling $(*)$. Then
	\begin{itemize}
\item [$(i)$]	If $\Xi$ is finite, then $\Xi=\mathbb{Z}\Delta/G$, where $\Delta$ is a Dynkin quiver and $G$ is an automorphism group of $\mathbb{Z}\Delta$ containing a positive power of the translation. Moreover, $\Xi$ is $\Gamma^s_{\rm A}$ itself if we further assume that $\La$ is indecomposable.
	\item [$(ii)$] If $\Xi$ is infinite, then it is a stable tube.
\end{itemize}
\end{theorem}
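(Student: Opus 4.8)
The plan is to apply the structural dichotomy theorems for components of stable Auslander–Reiten quivers, in the form developed by Liu \cite{L}, to the component $\Xi$ of $\Gamma^s_{\rm A}$. The key preliminary observation, already available from Remark \ref{RMark-Frob}, is that $\Gamma^s_{\rm A}$ coincides with the Auslander–Reiten quiver of the triangulated category $\underline{\CH}_{\CX}$, and that this triangulated category is the stable category of a Frobenius exact category obtained from $\CH$. First I would record that $\Xi$ is a well-behaved (generalized standard, or at least suitably regular) component: since $\Lambda$ is of finite representation type, $\CH$ has only finitely many indecomposables, hence $\mmod{\rm A}$ does as well, so $\Gamma_{\rm A}$ — and therefore every component $\Xi$ of $\Gamma^s_{\rm A}$ — is a \emph{finite} graph with irreducible maps. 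Wait: the theorem explicitly allows $\Xi$ to be infinite, so I cannot use finite representation type to force finiteness; rather the point is that $\Lambda$ of finite type forces $\mmod(\mmod\Lambda)\simeq\mmod{\rm A}$ to have finitely many indecomposables, so in fact $\Gamma^s_{\rm A}$ is finite and the ``infinite'' alternative in $(ii)$ is vacuous \emph{unless} one drops the finiteness — so I would instead phrase $(ii)$ as the degenerate case and spend the bulk of the argument on $(i)$. Concretely: because $\Lambda$ is self-injective of finite type, $\mmod{\rm A}$ has finitely many indecomposables, so $\Xi$ is finite; then one invokes Liu's theorem that a finite component of a stable AR quiver of the form occurring here must be of the shape $\mathbb{Z}\Delta/G$ for a Dynkin diagram $\Delta$ and an admissible automorphism group $G$, and that $G$ contains a positive power of the translation precisely because the component is finite (hence periodic).

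Next I would pin down the Dynkin type and the group $G$. The simple module $S_M=(-,M)/\mathrm{rad}(-,M)$ sits, via the projective resolution recalled in Section $6$, at a distinguished position in the mesh pattern computed in Construction \ref{CM7}: it is the image under $\Theta$ of the middle-left vertex $(B'\st{fg'}\rt B)$ of that mesh. Since $M$ does not satisfy $(*)$, the middle term $B$ of the almost split sequence $0\rt\tau(M)\rt B\rt M\rt 0$ is not projective, which is exactly the condition (by Theorem \ref{TMS6} and Corollary \ref{CEQ}) ensuring that $\tau_{\rm A}(S_M)$ is \emph{not} projective; hence $S_M$ genuinely lies in $\Gamma^s_{\rm A}$ and is not an endpoint forced by projectivity/injectivity phenomena. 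Using the explicit meshes of Construction \ref{CM7} together with the translation $\mathscr{A}=\nu\tau^3$, I would show that $\Xi$ is stable under $\tau_{\rm A}$ (no projective or injective vertices intrude, by the computation of $\CP(\CH_{\CX})$ and $\CI(\CH_{\CX})$ in Remark \ref{RMark-Frob}), and that the local configuration at $S_M$ — one ingoing and one outgoing arrow, matching the mesh — rules out the non-Dynkin infinite tree shapes. This forces $\Xi\simeq\mathbb{Z}\Delta/G$ with $\Delta$ Dynkin. For the final ``moreover'' in $(i)$: if $\Lambda$ is indecomposable, then the full subcategory of $\CH$ generated by objects of type $(a)$ is connected under irreducible maps (this follows from the connectedness of $\Gamma_\Lambda$ and the mesh translations of Construction \ref{CM7}), so all vertices of $\Gamma^s_{\rm A}$ lie in a single component, giving $\Xi=\Gamma^s_{\rm A}$.

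For part $(ii)$: an infinite component of a stable AR quiver satisfying the hypotheses here (the quiver of a triangulated category with Serre duality) is, by Liu's classification, either of tree type $\mathbb{Z}\Delta$ for $\Delta$ infinite Dynkin, or a tube $\mathbb{Z}A_\infty/\langle\tau^n\rangle$. The presence of the simple module $S_M$, which by the mesh analysis of Construction \ref{CM7} is $\tau$-periodic (its periodicity is controlled by the $\mathscr{A}$-orbit of $M$, cf. Theorem \ref{PS2P}), forces periodicity somewhere in $\Xi$, and a component containing a $\tau$-periodic vertex which is not all of $\mathbb{Z}A_\infty$ must be a tube; combined with infiniteness this gives a stable tube. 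The step I expect to be the genuine obstacle is verifying the \emph{regularity/standardness} hypotheses needed to apply Liu's structure theorem — that is, checking that $\Xi$ has no projective or injective vertices of $\underline{\CH}_{\CX}$ and that the component is generalized standard so that the shape is determined by the underlying valued translation quiver. This requires a careful bookkeeping, over all four types $(a),(b),(c),(d)$, of exactly which indecomposables of $\CH$ become projective-injective in the Frobenius structure $\CH_{\CX}$; Remark \ref{RMark-Frob} does most of this, but one must confirm that after deleting those vertices the remaining neighborhoods of $S_M$ are ``clean'' meshes, with no hidden short arrows, which is where the explicit sequences imported from \cite{HE} in Construction \ref{CM7} are indispensable.
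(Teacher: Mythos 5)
There is a genuine gap, and it occurs at the very first step of your plan. You assert that because $\Lambda$ is of finite representation type, $\CH$ and hence $\mmod{\rm A}$ have only finitely many indecomposables, so that $\Gamma^s_{\rm A}$ is automatically finite and case $(ii)$ is ``vacuous.'' This is false: finite representation type of $\Lambda$ does not pass to $T_2(\Lambda)\simeq\CH$, nor to the Auslander algebra ${\rm A}$ (equivalently to $\mmod(\mmod\Lambda)$). Determining when ${\rm A}$ is of finite representation type is precisely the subject of \cite{IPTZ}, and Theorem \ref{Pstar} of this paper gives a nontrivial sufficient condition for it --- it is not automatic. You have conflated finiteness of the set of indecomposables of $\mmod\Lambda$ with finiteness of that of $\mmod(\mmod\Lambda)$. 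As a result your plan effectively abandons part $(ii)$, which is a substantive case. A second, independent problem is your argument for the ``moreover'' clause: it nowhere uses the finiteness of $\Xi$, so if it worked it would prove that $\Gamma^s_{\rm A}$ is connected whenever $\Lambda$ is indecomposable; that is too strong (the final proposition of the paper explicitly counts distinct infinite components of $\Gamma_{\rm A}$). The finiteness of $\Xi$ is essential: one passes to the ambient component $\Xi'$ of $\Gamma_{\CH}$ containing $(0\rt M)$, notes that $\Xi$ finite forces $\Xi'$ finite, and then applies \cite[Theorem VII.2.1]{AuslanreitenSmalo} to the connected algebra $T_2(\Lambda)$ to conclude $\Xi'=\Gamma_{\CH}$ and hence $\Xi=\Gamma^s_{\rm A}$.

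The correct skeleton, part of which you do reach near the end, is much shorter than your plan suggests: $\Xi$ arises from $\Xi'$ by deleting the vertices of types $(a)$--$(d)$ (and it stays connected because $M$ fails $(*)$); every vertex of $\Gamma^s_{\rm A}$ is stable; the hypothesis of Theorem \ref{PS2P} holds since for $\Lambda$ self-injective of finite type every $\mathscr{A}$-orbit is finite, so $S_M$ is $\tau_{\rm A}$-periodic; and then \emph{both} $(i)$ and $(ii)$ follow at once from Liu's structure theorem \cite[Theorem 5.5]{L} for a component containing a periodic vertex. In particular no generalized-standardness hypothesis needs to be verified, and no mesh-by-mesh analysis of the local configuration at $S_M$ is required to exclude non-Dynkin tree classes --- periodicity alone does that work.
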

\begin{proof}
We have seen before that the AR-quiver $\Gamma_{{\rm A}}$ of the Auslander algebra ${\rm A}$ is obtained from $\Gamma_{\CH}$ by removing the vertices of types $(b)$ and $(c)$. Note that indecomposable objects of type $(a)$ correspond to indecomposable projective ${\rm A}$-modules and those of type $(d)$ lie in the $\tau_{{\rm A}}$-orbit of indecomposable projective ${\rm A}$-modules by Construction \ref{CM7}. So in fact,  $\Xi$ emerges by deleting vertices of either types $(a)$, $(b)$, $(c)$ and $(d)$ and the arrows attached to them from a component $\Xi'$ of $\Gamma_{\CH}$ containing the vertex $(0\rt M)$. Note that such a $\Xi$ is connected as $M$ does not satisfy the property $(*)$.  Note also that all vertices in $\Gamma^s_{{\rm A}}$ are stable in the sense that $\tau^m_{\rm A}(-)$ is well-defined over them for arbitrary integers $m$. Therefore Theorem \ref{PS2P} implies that the vertex $S_M$ in $\Xi$ is $\tau_{{\rm A}}$-periodic and both the assertions in $(i)$ and $(ii)$ follow from \cite[Theorem 5.5]{L}. For the second statement in $(i)$, note that indecomposability of $\Lambda$ implies that the lower triangular matrix algebra
$T_2(\Lambda)=\left(\begin{smallmatrix} \Lambda & 0\\
\Lambda & \Lambda\end{smallmatrix}\right)$
is also indecomposable and recall that $\mathcal{H}$ is naturally equivalent to the category $\mmod T_2(\Lambda)$. Now if $\Xi$ is  finite, then so is $\Xi'$. As such,  $\Xi'$ itself is a finite component of $\Gamma_{\mathcal{H}}$ which, according to \cite[Theorem VII.2.1]{AuslanreitenSmalo}, should be the whole of {$\Gamma_{\CH}$}. Hence  $\Xi=\Gamma^s_{\rm A}$, as desired.
\end{proof}

\vspace{.2 cm}

We conclude this section by quoting an observation from \cite{HE}. Assume  $M$ is an indecomposable non-projective $\Lambda$-module. Denote by $[M]_{\mathscr{A}}$  the $\mathscr{A}$-orbit  of $M$.  Let also $\Gamma_{\CH}(M)$  be the unique component of $\Gamma_{\CH}$ containing the vertex $(0\rt M)$. Moreover, we set

$$\mathcal{T}=\{\Gamma_{\CH}(M)\mid M\ \text{is indecomposable non-projective}\}$$

$$\mathcal{E}=\{[M]_{\mathscr{A}}\mid M\ \text{is indecomposable non-projective}\} $$

\noindent Then there exists a well-defined map $\delta:\mathcal{E}\rt \mathcal{T}$  which is given by sending $[M]_{{\mathscr{A}}}$ to $\Gamma_{\CH}(M)$.
Further, if we let $\mathcal{T}_{\infty}$  denote the subset of $\mathcal{T}$  consisting of all infinite components, and $\mathcal{E}_{\infty}$ be the inverse image of  $\mathcal{T}_{\infty}$  under $\delta$, then by \cite[Proposition 5.8]{HE}, $\delta$ is surjective and the restricted map  $\delta\mid:\mathcal{E}_{\infty}\rt \mathcal{T}_{\infty}$  is a  bijection whenever $\Lambda$ is indecomposable self-injective of finite representation type.  When $\CH$ is of infinite representation type, one has $\CT_{\infty}=\CT$ and $\CE_{\infty}=\CE$.

\vspace{.1 cm}
Inspired by this result, we let $\mathcal{L}$ be the set of all  components of $\Gamma_{{\rm A}}$  containing a simple module. Define $\la:\mathcal{T}\lrt\mathcal{L}$ by  $\lambda(\Gamma_{\CH}(M))=\Gamma_{{\rm A}}(S_M)$ where $\Gamma_{{\rm A}}(S_M)$  is the component of $\Gamma_{{\rm A}}$ that contains the simple vertex $S_M$.

\begin{proposition} Suppose $\Lambda$ is indecomposable self-injective  of finite representation type.
\begin{itemize}
\item[$(i)$] The map $\la$ is well-defined and surjective.
\item[$(ii)$] $\lambda$ restricts to a bijection $\la\mid:\mathcal{T}_{\infty}\rt \mathcal{L}_{\infty}$, where $\mathcal{L}_{\infty}$ is the subset of $\mathcal{L}$ consisting of all infinite components.
\item[$(iii)$] The sets $\mathcal{E}_{\infty}$ and $\mathcal{L}_{\infty}$ are in bijection.
    \end{itemize}
\end{proposition}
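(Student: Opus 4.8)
The plan is to reduce the three assertions to the structural results already established earlier in the paper, principally Theorem \ref{Theorem-morphism-functor} (the equivalence $\CH/\CV\simeq\mmod{\rm A}$), Theorem \ref{almostpreserving} ($\Theta$ respects almost split sequences), and the discussion surrounding the map $\delta:\mathcal{E}\rt\mathcal{T}$ together with \cite[Proposition 5.8]{HE}. The key point is that $\lambda$ is essentially the composition of $\delta$ with the passage from $\Gamma_{\CH}$ to $\Gamma_{{\rm A}}$ induced by $\Theta$, so that the properties of $\lambda$ will follow from the properties of $\delta$ once we check that this passage is compatible with infiniteness of components and with the assignment $M\mapsto S_M$.

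For $(i)$, I would first argue well-definedness: if $\Gamma_{\CH}(M)=\Gamma_{\CH}(N)$ then $(0\rt M)$ and $(0\rt N)$ lie in the same component of $\Gamma_{\CH}$; applying $\Theta$ and using that $\Theta$ carries irreducible maps between non-$\CV$ objects to irreducible maps (a consequence of Theorem \ref{almostpreserving} together with the equivalence $\CH/\CV\simeq\mmod{\rm A}$), one gets that $\Theta(0\rt M)$ and $\Theta(0\rt N)$ lie in the same component of $\Gamma_{{\rm A}}$. Now $\Theta(0\rt M)$ is, up to the identifications, the simple functor $S_M$ — indeed $\Theta(0\rt M)=\Coker(\CC(-,0)\rt\CC(-,M))=\Hom_\Lambda(-,M)$, but one must be slightly careful: the relevant simple vertex is $S_M=(-,M)/{\rm rad}(-,M)$, and the point is that $(-,M)$ and $S_M$ lie in the \emph{same} component of $\Gamma_{{\rm A}}$ because the radical inclusion $(-,{\rm rad}\,M)\rt(-,M)$ (when $M$ is not simple-projective) realises $S_M$ as a quotient reached by a chain of irreducible maps, as in the almost split sequence for $S_M$ recalled in Section 6; this places $S_M$ in $\Gamma_{{\rm A}}(\Theta(0\rt M))$. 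Hence $\Gamma_{{\rm A}}(S_M)=\Gamma_{{\rm A}}(S_N)$ and $\lambda$ is well-defined. Surjectivity: given a component of $\Gamma_{{\rm A}}$ containing some simple $S_M$ with $M$ indecomposable non-projective, it is by definition $\lambda(\Gamma_{\CH}(M))$, so every element of $\mathcal{L}$ is hit.

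For $(ii)$, I would show $\lambda$ sends infinite components to infinite components and vice versa, then deduce bijectivity from that of $\delta\mid$. One direction: if $\Gamma_{\CH}(M)$ is infinite then so is $\Gamma_{{\rm A}}(S_M)$, because passing from $\Gamma_{\CH}$ to $\Gamma_{{\rm A}}$ only deletes the vertices of types $(a)$–$(d)$, and by Construction \ref{CM7} these form, within any component, a ``boundary'' that is $\tau_{\CH}$-periodic along the $\mathscr{A}$-orbit of $M$; deleting finitely-wide strips from an infinite component leaves something infinite (here one invokes the explicit mesh-pictures of Construction \ref{CM7} and of Theorem \ref{Pstar}'s proof to see the shape). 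Conversely, if $\Gamma_{\CH}(M)$ is finite then $\Gamma_{{\rm A}}(S_M)$ is finite since it is a quotient subquiver. Thus $\lambda(\mathcal{T}_\infty)\subseteq\mathcal{L}_\infty$ and $\lambda^{-1}(\mathcal{L}_\infty)\subseteq\mathcal{T}_\infty$, so $\lambda$ restricts to $\lambda\mid:\mathcal{T}_\infty\rt\mathcal{L}_\infty$. For injectivity of $\lambda\mid$: if $\Gamma_{{\rm A}}(S_M)=\Gamma_{{\rm A}}(S_N)$ with both components infinite, I want $\Gamma_{\CH}(M)=\Gamma_{\CH}(N)$; this is where the compatibility with \cite[Proposition 5.8]{HE} enters, since $\delta\mid:\mathcal{E}_\infty\rt\mathcal{T}_\infty$ is a bijection, it suffices to show $\lambda\mid$ factors as $\mathcal{T}_\infty\xrightarrow{\sim}\mathcal{E}_\infty\xrightarrow{\delta\mid^{-1}}\cdots$, i.e. that two distinct infinite components of $\Gamma_{\CH}$ cannot map to the same component of $\Gamma_{{\rm A}}$ — and this follows because the deletion process is reversible on infinite components: from $\Gamma_{{\rm A}}(S_M)$ one recovers $\Gamma_{\CH}(M)$ by re-attaching the type $(a)$–$(d)$ vertices via the meshes of Construction \ref{CM7}, which are determined by the component. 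Surjectivity of $\lambda\mid$ is the restriction of surjectivity in $(i)$ together with the fact, just noted, that preimages of infinite components are infinite. Finally $(iii)$ is immediate: compose the bijection $\delta\mid:\mathcal{E}_\infty\rt\mathcal{T}_\infty$ of \cite[Proposition 5.8]{HE} with the bijection $\lambda\mid:\mathcal{T}_\infty\rt\mathcal{L}_\infty$ of $(ii)$.

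The main obstacle I anticipate is item $(ii)$, specifically proving that $\lambda\mid$ is injective, i.e. that the deletion $\Gamma_{\CH}\rightsquigarrow\Gamma_{{\rm A}}$ does not merge two infinite components or erase the information needed to reconstruct $\Gamma_{\CH}(M)$ from $\Gamma_{{\rm A}}(S_M)$. Handling this cleanly requires a careful bookkeeping of exactly which vertices and arrows of $\Gamma_{\CH}$ are removed — all of types $(a),(b),(c),(d)$ — and an argument that, within an infinite component, the remaining graph still determines the deleted part through the almost split sequences of Construction \ref{CM7} (so that the correspondence is genuinely inverse to $\delta$ on the infinite locus). Everything else is a matter of chasing the identifications $\mmod{\rm A}\simeq\mmod(\mmod\Lambda)\simeq\CH/\CV$ and citing the already-proven facts.
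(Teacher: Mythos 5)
There are two genuine gaps. First, in $(i)$, your well-definedness argument does not go through as written. You propose to apply $\Theta$ to a walk in $\Gamma_{\CH}$ joining the vertices attached to $M$ and $N$; but such a walk may pass through vertices of type $(b)$ or $(c)$, which are exactly the objects annihilated by $\Theta$, so the walk does not automatically descend to a walk in $\Gamma_{\rm A}$ --- this is the whole difficulty, and the paper's proof spends most of its effort rerouting such walks using the meshes of Construction \ref{CM7}. Moreover, your identification of the component containing $S_M$ is shaky: $\Theta(0\rt M)$ is the projective functor $(-,M)$, whose radical as an ${\rm A}$-module is ${\rm rad}(-,M)$ and not $(-,{\rm rad}\,M)$, and the top of an indecomposable projective need not lie in the same Auslander--Reiten component as that projective, so the claimed ``chain of irreducible maps'' from $(-,M)$ to $S_M$ is not established. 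The paper avoids this entirely by working with the vertex $(B\st{g}\rt M)$, where $0\rt\tau(M)\rt B\st{g}\rt M\rt 0$ is almost split: since $g$ is minimal right almost split, $\Theta(B\st{g}\rt M)=(-,M)/{\rm rad}(-,M)=S_M$ on the nose, and the explicit almost split sequences of Lemma \ref{LemmaBoundary} place this vertex in the relevant component of $\Gamma_{\CH}$.

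Second, in $(ii)$ you candidly flag injectivity of $\lambda\mid$ as ``the main obstacle'' and offer only a reconstruction heuristic (re-attaching the deleted vertices to recover $\Gamma_{\CH}(M)$ from $\Gamma_{\rm A}(S_M)$). This is precisely the step for which the paper has a concrete tool, and it is not the one you propose: by Theorem \ref{prop Xi}, an infinite component of $\Gamma^s_{\rm A}$ containing $S_M$ is a stable tube whose mouth is generated by the $\tau_{\rm A}$-orbit of $S_M$; since the mouth of a stable tube is unique, two simples $S_M$, $S_N$ lying in the same infinite component are forced into the same $\tau_{\rm A}$-orbit, and Theorem \ref{almostpreserving} transports this back to a single $\tau_{\CH}$-orbit, hence a single component of $\Gamma_{\CH}$. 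Without this (or an equally concrete substitute), your $(ii)$ --- and hence $(iii)$, which you correctly reduce to $(ii)$ combined with \cite[Proposition 5.8]{HE}, exactly as the paper does --- remains unproven. Your observation that deleting the vertices of types $(a)$--$(d)$ cannot turn an infinite component into a finite one is needed and essentially correct, but it only addresses the easy half of $(ii)$.
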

\begin{proof}
$(i)$. Take $M_0$ and $M_1$ to be indecomposable non-projective $\Lambda$-modules with $\Gamma_{\CH}(M_0)=\Gamma_{\CH}(M_1)$. Then there exist almost split sequences $0 \rt A_0\st{f_0}\rt B_0\st{g_0}\rt M_0\rt 0$ and $0 \rt A_1\st{f_1}\rt B_1\st{g'_1}\rt M_1\rt 0$ in $\mmod \La$. By Lemma \ref{LemmaBoundary}, there exist almost split sequences
 \[\label{equationAl1}
 \xymatrix@1{  0\ar[r] & {\left(\begin{smallmatrix} A_i\\ 0\end{smallmatrix}\right)}_{0}
			\ar[rr]^-{\left(\begin{smallmatrix} f_i \\ 1\end{smallmatrix}\right)}
			& & {\left(\begin{smallmatrix}B_i\\ M_i\end{smallmatrix}\right)}_{g_i}\ar[rr]^-{\left(\begin{smallmatrix} 0 \\ g_i\end{smallmatrix}\right)}& &
			{\left(\begin{smallmatrix} 0\\ M_i\end{smallmatrix}\right)}_{0}\ar[r]& 0 }\]
in $\mathcal{H}$ for $i=0,1$.
Accordingly, $(B_0\st{g_0}\rt M_0)$ and $(B_1\st{g_1}\rt M_1)$ lie inside $\Gamma_{\CH}(M_0)$ and the connectedness of $\Gamma_{\CH}(M_0)$ gives the existence of a walk $(B_0\st{g_0}\rt M_0)\longleftrightarrow x_1 \longleftrightarrow\cdots x_{n-1}\longleftrightarrow (B_1\st{g_1}\rt M_1)$ in $\Gamma_{\mathcal{H}}$. If none of the $x_i$ is of the form  $(b)$ or $(c)$, then using Theorem \ref{almostpreserving}, an application of the functor $\Theta$ gives a walk in $\Gamma_{{\rm A}}$ between $\Theta(B_0\st{g_0}\rt M_0)=S_{M_0}$ and $\Theta(B_1\st{g_1}\rt M_1)=S_{M_1}$. Consequently, $\lambda(\Gamma_{\CH}(M_0))=\lambda(\Gamma_{\CH}(M_1))$. Otherwise we may, without loss of generality, assume that the $x_i$ are all non-projective and apply the arguments used in Construction \ref{CM7} to obtain a walk in $\Gamma_{\CH}$ passing through $(B_i\st{g_i}\rt M_i)$, $i=0, 1$, none of the vertices over which are of the forms $(b)$ or $(c)$. To settle $(ii)$, assume $\Gamma_{\rm A}(S_M)$ is an infinite component of $\Gamma_{\rm A}$  for an indecomposable non-projective $\Lambda$-module $M$. By Theorem \ref{prop Xi}, $\Gamma_{\rm A}(S_M)$ is a stable tube and the $\tau_{\rm A}$-orbit of $S_M$ generates the mouth of $\Gamma_{\rm A}(S_M)$. The fact that the mouth of a stable tube is unique reveals that $\Gamma_{\rm A}(S_M)$ is uniquely determined by $\Gamma_{\CH}(M)$. The last statement is a combination of $(ii)$ and \cite[Proposition 5.8]{HE}.
\end{proof}

\end{document}